\crefname{equation}{}{}
\Crefname{equation}{}{}
\newtheorem{theorem}{Theorem}[section]
\newtheorem*{theorem*}{Theorem}
\newtheorem{proposition}[theorem]{Proposition}
\newtheorem*{proposition*}{Proposition}
\newtheorem*{corollary*}{Corollary}
\newtheorem{lemma}[theorem]{Lemma}
\newtheorem*{lemma*}{Lemma}
\newtheorem{definition}[theorem]{Definition}
\newtheorem*{definition*}{Definition}
\newtheorem{remark}[theorem]{Remark}
\newtheorem*{remark*}{Remark}
\newcommand{\comment}[1]{}
\newcommand{\R}{\mathbb{R}}
\newcommand{\Ker}[1]{\mathrm{Ker}\left(#1 \right)}
\newcommand{\Triu}{ \mathrm{ut} \left( ( \Omega U_m)^t \Omega \right)}
\newcommand{\fl}{ \mathbf{fl} }
\newcommand{\F}{\mathbf{F}}
\newcommand{\e}{\mathbf{e}}
\newcommand{\f}{\mathbf{f}}
\DeclareMathOperator*{\argmin}{arg\,min}
\title{Randomized Householder QR}
\author{Laura Grigori\thanks{Institute of Mathematics, EPFL and Scientific Computing Division, Paul Scherrer Institute, Switzerland.}, Edouard Timsit\thanks{Sorbonne Université, Inria, CNRS, Université de Paris, Laboratoire Jacques-Louis Lions, Paris, France}}
\date{May 22nd, 2024}
\begin{document}

\maketitle

\textbf{Abstract: } This paper introduces a randomized Householder QR factorization (RHQR). This factorization can be used to obtain a well conditioned basis of a vector space and thus can be employed in a variety of applications. The RHQR factorization of the input matrix $W$ is equivalent to the standard Householder QR factorization of matrix $\Psi W$, where $\Psi$ is a sketching matrix that can be obtained from any subspace embedding technique. For this reason, the RHQR factorization can also be reconstructed from the Householder QR factorization of the sketched problem, yielding a single-synchronization randomized QR factorization (recRHQR). In most contexts, left-looking RHQR requires a single synchronization per iteration, with half the computational cost of Householder QR, and a similar cost to Randomized Gram-Schmidt (RGS) overall.  
We discuss the usage of RHQR factorization in the Arnoldi process and then in GMRES, showing thus how it can be used in Krylov subspace methods to solve systems of linear equations. Based on Charles Sheffield's connection between Householder QR and Modified Gram-Schmidt (MGS), a BLAS2-RGS is also derived. A finite precision analysis shows that, under mild probabilistic assumptions, the RHQR factorization of the input matrix $W$ inherits the stability of the Householder QR factorization, producing a well-conditioned basis and a columnwise backward stable factorization, all independently of the condition number of the input $W$, and with the accuracy of the sketching step. We study the subsampled randomized Hadamard transform (SRHT) as a very stable sketching technique.

Numerical experiments show that RHQR produces a well conditioned basis whose sketch is numerically orthogonal even for the most difficult inputs, and an accurate factorization. The same results were observed with the high-dimensional operations made in half-precision. The reconstructed RHQR from the HQR factorization of the sketch was stabler than the standard Randomized Cholesky QR. 

The first version of this work was made available on HAL on the 7th of July 2023 and can be found at : https://hal.science/hal-04156310/

\section{Introduction}
Computing the QR factorization of a matrix $W \in \R^{n \times m}$, $m \ll n$, is a process that lies at the heart of many linear algebra algorithms, for instance for solving least-squares problems or computing bases in Krylov subspace methods. Such process outputs the factorization
\begin{align*} 
W = QR, \quad Q \in \R^{n \times m}, \quad Q^tQ = I_m, \quad R \in \R^{m \times m}, \quad R \text{ upper triangular. } 
\end{align*}
The QR factorization of $W \in \R^{n \times m}$ is usually obtained using the Gram-Schmidt process or the Householder process. The Householder process relies on \textit{Householder vectors} $u_1, \cdots, u_m \in \R^n$ and orthogonal matrices called \textit{Householder reflectors} $P(u_1), \cdots, P(u_m) \in \R^{n \times n}$ such that
\begin{align*} W = P(u_1) \cdots P(u_m)\left[ \begin{matrix} R \\ 0_{(n-m) \times m} \end{matrix} \right] = QR, \quad Q = P(u_1) \cdots P(u_m) \cdot \begin{bmatrix} I_m \\ 0_{(n-m) \times m} \end{bmatrix}, \quad Q^t Q = I_m, \end{align*}
where $R \in \R^{m \times m}$ is an upper-triangular factor. It was shown in~\cite{schriebervanloan} that the composition of Householder reflectors admits the factorization
\begin{align*}P(u_1) \cdots P(u_m) = I_n - U T U^t, \quad P(u_m) \cdots P(u_1) = I_n - U T^t U^t\end{align*}
where $U = \left[ u_1 \; | \; \cdots \; | \; u_m \right] \in \R^{n \times m}$ is formed by the Householder vectors (and is lower-triangular by design, see~\Cref{section:preliminaries}), and $T \in \R^{m \times m}$ is an upper-triangular factor. Applying the Woodburry-Morrison formula to $I_n - U T U^t$, it was outlined in~\cite{puglisi} that
\begin{align*}  U^t U = T^{-t} + T^{-1}. \end{align*}
All equations above hold in exact arithmetics. J.H. Wilkinson showed in~\cite{wilkinson} that the Householder QR factorization process is normwise backward stable in finite precision. Later on, it has been shown in~\cite[Chapter 19.3, Thm 19.4 p360]{higham} that the Householder procedure is column-wise backward stable in finite precision. Overall, computations with Householder reflectors are well-known for their excellent numerical stability. 
An idea from Charles Sheffield and praised by Gene Golub led to the analysis in~\cite{paige} revealing that the $T$ factor produced by the Householder QR factorization of $\left[0_{m \times m}; \; W\right] \in \R^{(n+m) \times m}$ captures the loss of orthogonality of the Modified Gram-Schmidt procedure on $W$ in finite precision arithmetics. Accordingly, a BLAS2 implementation of MGS was derived in~\cite{barlow} (see a comparison of different implementations of Gram-Schmidt procedures in the recent review~\cite{carson}).

The present work focuses on the use of the $\epsilon$-embedding technique, which has proved to be an effective solution for reducing the communication and computational cost of several classic operations, while providing quasi-optimal results. When solving large scale linear algebra problems on a parallel computer, the communication is the limiting factor preventing scalability to a large number of processors. For an $m$ dimensional vector subspace of interest $\mathcal{W} \in \R^n$, some positive real number $\epsilon < 1$, and integer $m < \ell \ll n $, a \textit{sketching matrix} $\Omega \in \R^{\ell \times n}$ is said to be an $\epsilon$-embedding of $\mathcal{W}$ if
\begin{align} \label{eq:embeddingprop}\forall w \in \mathcal{W}, \quad \left| \| \Omega w \| - \|w \| \right| \leq \epsilon \|w\|.\end{align}
If $\ell$ is a modest multiple of $m$, there exist simple distributions on $\R^{\ell \times n}$ from which one can easily draw $\Omega \in \R^{\ell \times m}$ independently of $\mathcal{W}$ and verify~\cref{eq:embeddingprop} with high probability. Furthermore, some distributions allow the sketching operation $x \mapsto \Omega x$ to be done for a quasi linear cost $\mathcal{O}(n \log (n))$ and negligible storage cost of $\Omega$ (see~\cite{ailon2009,sparsejlt}). Several methods have been proposed in order to approximate a least squares problem, i.e finding the minimizer of $x \mapsto \| W x - b \|_2$ using the solution of the cheaper \textit{sketched least squares problem}, i.e finding the minimizer of $x \mapsto \| \Omega (W x - b) \|_2$. In the pioneering work~\cite{rokhlintygert}, the authors propose to first compute $\Omega W$, then compute its rank revealing QR factorization to solve the sketched least squares problem. The minimizer and the computed R factor are then used respectively as a starting point and a preconditioner for the conjugate gradient method approximating the initial least squares problem. Later, an alternative randomized Gram-Schmidt (RGS) process was introduced in~\cite{rgs}, where each vector of the sketched basis is obtained by sketching the corresponding basis vector right after its orthogonalization step. For a given basis $W$ of a vector subspace of interest and an $\epsilon$-embedding $\Omega \in \R^{\ell \times m}$ of $\mathrm{Range}(W)$, the RGS algorithm produces, in exact arithmetics, the factorization
$$W = QR, \quad Q \in \R^{n \times m}, \quad (\Omega Q)^t \Omega Q = I_m, \quad \mathrm{Cond}(Q) \leq \frac{1+\epsilon}{1-\epsilon}, \quad R \in \R^{m \times m}, \quad R \text{ upper-triangular },$$
with the same communication cost as Classical Gram-Schmidt (CGS), similar numerical stability to MGS (sometimes even better), and half their computational cost. This \textit{sketch orthogonal basis} $Q$ can then be used as a well conditioned basis of $\mathrm{Range}(W)$, also allowing to approximate the least squares problem. It was shown in~\cite{rgs}, among other things, that this basis could be used in the Arnoldi iteration and GMRES allowing to obtain a quasi-optimal solution. The authors however identified very difficult cases in which both RGS and MGS experience instabilities in finite precision, with the sketch basis $\Omega Q$ losing orthogonality and $\mathrm{Cond}(Q) \approx 10^2$. Authors in~\cite{yujijoel} also use randomization in Krylov subspaces methods, combined with partial orthogonalization and then whitening of the Krylov basis. Other randomized algorithms for computing a well conditioned basis are studied in~\cite{balabanovqr,multisketching,melnichenko} relying on techniques as column pivoting or multi-sketching. 
An overview of randomized algorithms and sketching techniques is given in the following reviews and books~\cite{findingstructure,foundations,woodruff,randlapack}.

In this work we introduce a randomized version of the Householder QR factorization (RHQR). By using a sketching matrix $\Psi \in \R^{(\ell+m)\times n}$, which can be obtained from any $\epsilon$-embedding matrix $\Omega \in \R^{\ell \times n}$, this RHQR factorization relies on \textit{randomized Householder vectors} $u_1, \hdots u_m \in \R^n$ and upper-triangular $T$ factor, and verifies in exact arithmetics
\begin{align} \label{intro:equivfacto} \Psi \Bigl( I_n - U T (\Psi U)^t \Psi \Bigr) \cdot \begin{bmatrix} R \\ 0_{(n-m) \times m} \end{bmatrix} = \Bigl( I_{\ell+m} - (\Psi U) T (\Psi U)^t \Bigr) \cdot \begin{bmatrix} R \\ 0_{\ell \times m} \end{bmatrix}, \quad (\Psi U)^t \Psi U = T^{-t} + T^{-1}, \end{align}
where the left-hand side is the sketch of the RHQR factorization of $W$, and the right-hand side is the standard Householder QR factorization of the sketch $\Psi W$. This equation shows that in exact precision, the sketch of the basis output by RHQR is orthogonal, i.e
\begin{align*} Q := \Bigl( I_n - U T (\Psi U)^t \Psi \Bigr) \begin{bmatrix} I_n \\ 0_{(n-m) \times m} \end{bmatrix}, \quad W = QR, \quad (\Psi Q)^t \Psi Q = I_{m}, \quad \mathrm{Cond}(Q) \leq \frac{1+\epsilon}{1-\epsilon}.\end{align*}
We also show that the randomized Householder vectors (and thus the whole RHQR factorization of $W$) can be reconstructed from the Householder QR factorization of $\Psi W$, which outputs the first $m$ rows of $U$ and matrices $T, \Psi U$ verifying in exact precision
\begin{align} \label{intro:reconstructrhqr}
 W_{m+1:m, 1:m} = U_{m+1:m, 1:m} \cdot \mathrm{ut}\left( T^t (\Psi U)^t (\Psi W) \right), 
\end{align}
where $\mathrm{ut}$ denotes the upper-triangular part of the input matrix. The key to the randomized Householder QR factorization is the use of the sketching matrix $\Psi$ that allows to preserve the decomposition into a lower triangular matrix and an upper triangular matrix of tall and skinny matrices, thus making it possible to exploit the connection between the sketches of the randomized Householder vectors, $\Psi U$, and the L-factor of the LU factorization of $Q-I$. This connection also allows to express any matrix $Q$ such that $(\Psi Q)^t \Psi Q = I_m$ as a product of randomized Householder reflectors, whose randomized Householder vectors can be retrieved. 
Equations~\cref{intro:equivfacto,intro:reconstructrhqr} yield three main procedures outlined in this work : \textit{right-looking} and \textit{left-looking} RHQR, and \textit{reconstructed} RHQR (recRHQR). The \textit{right-looking}  and \textit{left-looking} RHQR presented in~\Cref{algo:rhqr_rightlooking,algo:rhqr_leftlooking} are based on~\cref{intro:equivfacto}. We show that the core iteration of left-looking RHQR makes a single synchronization as long as the next vector of the input basis is available (e.g QR factorizations, $s$-step Krylov methods, block Krylov methods). We also show that the computational cost of its core iteration is dominated by that of two sketches and a matrix vector product, resulting in twice less flops than Householder QR. We observe in our experiments that RHQR is as stable as Householder QR. Furthermore, the core iteration of RHQR relies only on basic linear algebra operations and doesn't need the solving of a sketched least-squares problem as in RGS, making it potentially even cheaper than RGS. On the previously mentioned difficult cases from~\cite{rgs}, the left-looking RHQR outputs a basis $\Psi Q$ that is numerically orthogonal, with $\mathrm{Cond}(Q) < 2$, while maintaining an accurate factorization. The third process, \textit{reconstruct} RHQR shown in~\Cref{algo:reconstructrhqr}, is based on~\cref{intro:reconstructrhqr} and does a single synchronization, like CholeskyQR and its variants. On the previously mentioned difficult cases, recRHQR is more stable than Randomized CholeskyQR and outputs a basis whose condition number is less than $5$, while maintaining an accurate factorization.

We provide a worst-case rounding error analysis of the RHQR factorization. The use of this model is made possible by the dimension reduction enabled by the sketching technique, coupled with the existence of very accurate sketching techniques. We consider a mixed precision setting, where the sketching and the low-dimensional operations are done in high precision $\e$, and the high-dimensional operations are done in low precision $\f$.  Given that the accuracy of RHQR depends mainly on the forward accuracy of the sketching step,  we analyze and propose the subsampled randomized Hadamard transform (SRHT) as a very accurate sketching technique using worst-case rounding error analysis. We show that under mild probabilistic assumptions and using SRHT sketching, if $(4 \log_2(n) + 2 \ell + 7)\cdot \e \leq \f$, then in mixed precision arithmetics and independently of the condition number of the input $W$,
$$\mathrm{Cond}(\widehat{Q}) \lesssim \frac{1+\epsilon}{1-\epsilon} \cdot \frac{1+ 12\cdot m^{3/2}\cdot \f}{1-12 \cdot m^{3/2} \cdot \f}, \quad \quad \|(W - \widehat{Q} \widehat{R})(:,j)\| \lesssim \frac{1+\epsilon}{1-\epsilon} \cdot 12 m^{3/2} \f \cdot \|W(:,j)\|. \quad \quad \text{(w.h.p)}$$
thus inheriting every aspect of Householder QR's stability. To our knowledge, this makes RHQR the first unconditionally stable randomized algorithm for computing the factorization $W = QR$ in its simplest form. Probabilistic assumptions are only made in order to ensure the $\epsilon$-embedding properties, hence the mention of high probability in the above result. The accumulation of rounding errors is nonetheless analyzed in the worst-case. In contrast to our result, authors showed in~\cite{rgs}, by using a probabilistic rounding error model and mixed precisions, that the bound on the condition number of the basis computed by RGS depends on the condition number of the input matrix $W$. Thus RHQR, and possibly recRHQR, can be used to improve the stability of other algorithms that require computing a well conditioned basis of a vector space, and of algorithms that require a basis that is sketch-orthogonal to machine's precision. They can be used for example in randomized QRCP algorithms~\cite{guqrcp,martinssonqrcp,melnichenko}, in combination with a (strong) rank revealing factorization of the $R$ factor.  They can also be used to produce an $\ell_2$-orthonormal basis of a vector space through the Cholesky QR factorization of the $Q$ factor produced by RHQR or recRHQR, thus replacing randomized Cholesky QR with a more stable factorization (see Remark 2.1 in \cite{brgs} for more details).  Such algorithms can become unconditionally stable, given the stability of randomized Householder QR.  We finally note that the randomized Householder QR factorization can be used in place of the Householder QR factorization in TSQR~\cite{tsqr}, resulting thus in a communication avoiding randomized Householder QR factorization for tall and skinny matrices for half the flops of TSQR and same communication cost.

Following the methods in~\cite[Chapter 6.3.2]{saad}, this RHQR factorization is then embedded in the Arnoldi iteration, allowing to solve systems of linear equations or eigenvalue problems. In this case, RHQR-Arnoldi requires an additional sketch and an additional generalized matrix vector product when compared to RGS-Arnoldi (we recall that Householder-Arnoldi is also more expensive than MGS-Arnoldi). Finally, based on Charles Sheffield's connection between MGS and Householder QR and as it was used in~\cite{barlow} to derive a BLAS2 version of MGS, we derive a BLAS2-RGS in~\Cref{algo:blas2rgs} verifying the following equations:
 
$$\begin{bmatrix} 0_{m \times m} \\ W \end{bmatrix} = \widebar{Q} R, \quad \widebar{Q} = \begin{bmatrix} I_m - T \\ Q T \end{bmatrix}, \quad (\Psi \widebar{Q})^t \Psi \widebar{Q} = I_m, \quad I_m + (\Omega Q)^t \Omega Q = T^{-1} + T^{-t}.$$
On the previously mentioned difficult cases, BLAS2-RGS outputs a basis whose condition number is less than $20$, while maintaining an accurate factorization.

This paper is organized as follows. \Cref{section:preliminaries} discusses the classical Householder QR factorization and the subspace embedding technique. \Cref{section:rhqr} introduces the randomized Householder reflector, and proves that the sketched RHQR factorization of $W$ coincides with the Householder QR factorization of the sketch $\Psi W$ (i.e~\cref{intro:equivfacto}). We then introduce compact formulas that allow to handle this procedure almost in-place and in a left-looking context, as described in our main contribution~\Cref{algo:rhqr_leftlooking} (left-looking RHQR). We also describe a block version in~\Cref{algo:brhqr} (blockRHQR). In~\Cref{section:finiteprecision} we provide a finite precision analysis of the RHQR factorization. In~\Cref{section:arnoldigmres}, we adapt the randomized Arnoldi iteration to RHQR in~\Cref{algo:rhArnoldi} (RHQR-Arnoldi), as well as GMRES in~\Cref{algo:rhGMRES} (RHQR-GMRES), using the same principles as those found in~\cite{saad}. In~\Cref{section:rmgs}, based on Charles Sheffield's connection between Householder QR and Modified Gram-Schmidt, we introduce a Randomized Modified Gram-Schmidt process (BLAS2-RGS) in~\Cref{algo:blas2rgs}. In~\Cref{section:rhqrbis}, we describe an alternative randomized Householder reflector and the corresponding RHQR factorizations in~\Cref{algo:rhqrbis_rightlooking,algo:rhqrbis_leftlooking} (left-looking and right-looking trimRHQR). \Cref{section:experiments} describes a set of numerical experiments showing the performance of the algorithms introduced in this paper compared to state of the art algorithms.

\section{Preliminaries}\label{section:preliminaries}
In this section, we first introduce our notations. We then outline the original Householder procedure. Finally, we introduce the subspace embedding technique.

\subsection{Notations}\label{section:preliminaries:notations}
We denote, for all $n \in \mathbb{N}^*$, $0_n$ the nul vector of $\R^n$. We denote, for all $n, m \in \mathbb{N}$, $0_{n \times m}$ the nul matrix of $\R^{n \times m}$. The matrix for which we seek to produce a factorization is formed by the vectors $w_1 \hdots w_m \in \R^n$, is denoted $W \in \R^{n \times m}$, and is viewed as two blocks:
$$\left[w_1 \; | \; \cdots \; | \; w_m \right] = W = \left[ \begin{matrix} W_1 \\ W_2 \end{matrix} \right], \quad W_1 \in \R^{m \times m}, \quad W_2 \in \R^{(n-m) \times m}.$$
The symbol $\oslash$ used between two vectors denotes the concatenation of vectors. If $x \in \R^{n_1}$ and $y \in \R^{n_2}$, then the vector $x \oslash y \in \R^{n_1 + n_2}$ is defined as:
$$ x \oslash y = \left[ \begin{matrix} x \\ y \end{matrix} \right].$$
The vertical concatenation of matrices with same column dimension is denoted $C = \left[A; \; B \right]$, $A \in \R^{a \times b}$, $B \in \R^{c \times b}$, $C \in \R^{(a+c) \times b}$. 
If $x \in \R^n$, then $|x| \in \R^n$ denotes the vector whose entries are the absolute values of the entries of $x$. We write the inner product of two vectors $\langle x , y \rangle = x^t y$. The norm of a vector, denoted $\|x\|$, is always the $\ell_2$-norm (Euclidean norm) of $x$. If $A \in \R^{n\times m}$ is a matrix, we write $\|A\|_2$ its spectral norm, and $\|A\|_F$ its Frobenius norm.
The null space of $A$ is denoted
$$\Ker{A} = \{ x \in \R^m, \quad Ax = 0_n \}.$$
\color{black}
The symbol $u_j$ for $j \in \{1, \hdots , m \}$ is dedicated to Householder vectors. The symbol $T_j$ for all $j \in \{1, \hdots , m\}$ denotes $j \times j$ matrices that are related to randomized Householder vectors $u_1 \hdots u_j$. The symbol $U$ denotes the matrix formed by the vectors $u_1, \hdots, u_m$.
The symbols $\Omega$ and $\Psi$ denote sketching matrices. If the matrix to be factored is $W \in \mathbb{R}^{n \times m}$, then $\Omega$ and $\Psi$ are respectively in $\R^{\ell \times (n-m)}$ and $\R^{(\ell+m) \times n}$. 

The symbol $P$ denotes several versions of the Householder reflector. When $P$ has one non-zero argument $v \in \R^n$, then it denotes the standard Householder reflector of $\R^n$,
$$P(v) = I_n - \frac{2}{\|v\|^2} v v^t.$$
When $P$ has two arguments $v \in \R^n$ and $\Theta \in \R^{\ell \times n}$, $v \notin \Ker{\Theta}$, then it denotes the randomized Householder reflector,
$$P(v, \Theta) = I_n - \frac{2}{ \| \Theta v \|^2} \cdot v (\Theta v)^t \Theta,$$
which is described in~\Cref{section:rhqr}.

Given any matrix $A \in \R^{n \times m}$, we denote $\mathrm{ut}(A)$, $\mathrm{sut}(A)$, $\mathrm{lt}(A)$ and $\mathrm{slt}(A)$ the upper triangular, strictly upper triangular, lower triangular, strictly lower triangular parts of $A$, respectively. We denote $A_{i,j}$ the entry of $A$ located on the $i$-th row and the $j$-th column. We denote $A_{j_1 : j_2}$ the matrix formed by the $j_1, j_1 + 1, \hdots j_2$-th columns of $A$. We denote $a_1, \hdots a_m$ the column vectors of $A$. In the algorithms, we denote the entries $i$ to $j$ of a vector $w$ with the symbol $(w)_{i:j}$.

The canonical vectors of $\R^n$ are denoted $e_1, \hdots, e_m$. For some $k \in \{1, \hdots , n\}$ and $j \in \{1, \hdots , k\}$, we denote $e_j^k$ the $j$-th canonical vector of $\R^k$. 

The letter $\epsilon$ denotes a real number of $\left] 0 ,  1 \right[$, which is a parameter that determines the dimensions of $\Omega$ and thus $\Psi$. 

The letter $\Delta$ is never a standalone symbol, but is coupled with other standalone symbols. If $x$ is some mathematical object, then $\Delta x$ denotes an object of the same nature, related to $x$, with smaller magnitude (by magnitude we mean norms for vectors and matrices, absolute value for scalars). For example, if $\lambda \in \R$ is a scalar, then $\Delta \lambda$ is another scalar, that has some relation to $\lambda$, and whose absolute value is small compared to that of $\lambda$. 

We make use of two machine precisions in this work. The first is denoted $\e$, and is referred to as \textit{high precision}. The second is denoted $\f$ and is referred to as \textit{low precision}, $0 < \mathbf{e} \ll \mathbf{f} \ll 1$. The set of floating numbers in the lowest precision is denoted $\F_\f \subset \R^n$, while the set of floating numbers in the highest precision is denoted $\F_\e$. We use the worst-case rounding error analysis in this work and employ the same notations as in~\cite{higham} for the high precision $\e$:
$$\forall x, y \in \F_\e, \quad \forall \; * \in \{+, -, \times, \div\}, \quad \fl (x * y) = (1+\delta) (x * y), \quad | \delta | \leq \e \quad \text{(high precision)}$$
The letter $\delta$ is used exclusively to denote the round-off error described above. As in~\cite{higham}, we don't differentiate the symbols for all round-off errors. Each occurrence of $\delta$ denotes potentially different round-off errors, but of magnitude all bounded by $\e$. For instance, if $x \in \R^n$ is a vector, then the notation $\mathrm{Diag}(1+\delta) \cdot x$ is to be understood as a vector whose coordinates are $(1+\delta)x_1, (1+\delta)x_2, \hdots$, where all $\delta$ are potentially different.
Using notations from~\cite{higham},
\begin{align} \label{eq:notationgamma} 1+\theta_n = \prod_{i = 1}^n (1+\delta)^{\sigma_i}, \quad \sigma_i = \pm 1, \quad |\theta_n| \leq \gamma_n := \frac{n \e}{1-n\e}\end{align}
We also use the rules of rounding error computing described in~\cite[Lemma 3.3]{higham}. These rules allow to efficiently accumulate a number of rounding errors as the calculation progresses. We recall them here for completeness:
\begin{align}\label{eq:errorrules}
\begin{split}
        (1+\theta_k)(1+\theta_j) = 1+\theta_{k+j}, \quad \quad \frac{1+\theta_k}{1+\theta_j} = \begin{dcases}
            1+ \theta_{k + 2j} \text{ if } k < j \\
            1 + \theta_{k+j} \text{ if } k \geq j
        \end{dcases}\,, \quad \quad \gamma_{k} \gamma_j \leq \gamma_{\min \{k,j \}} \text{ if } \max \{k,j\} \cdot \e \leq \frac{1}{2} \\ \\
         k \gamma_j = \gamma_{kj}, \quad \quad \gamma_k + \e = \gamma_{k+1}, \quad \quad \gamma_k + \gamma_j + \gamma_{k}\gamma_j = \gamma_{k+j} \quad \quad \quad \quad \quad \quad \quad \quad
        \end{split}
\end{align}
Finally, we use the following notations for computations in low precision $\f$: 
$$\forall x, y \in \F_\f, \quad \forall \; * \in \{+, -, \times, \div\}, \quad \fl (x * y) = (1+\zeta) (x * y), \quad | \zeta | \leq \f \quad \text{(low precision)}$$
and
\begin{align} \label{eq:notationchi} 1+\eta_n = \prod_{i = 1}^n (1+\zeta)^{\sigma_i}, \quad \sigma_i = \pm 1, \quad |\eta_n| \leq \chi_n := \frac{n \f}{1-n\f}\end{align}
and make use of the same rules of computation for $\eta_n, \gamma_n$ as for $\theta_n, \gamma_n$.

\subsection{Householder orthonormalization}\label{section:preliminaries:housholder}
The Householder QR factorization is an orthogonalization procedure, alternative to the Gram Schmidt process. While the Gram-Schmidt process focuses on building explicitly the orthogonal factor $Q$, Householder's procedure focuses instead on building the factor $R$ using orthogonal transformations. $Q$ and $Q^t$ are stored in an implicit form, and can explicitly retrieved. The central operator of this procedure is the Householder reflector:
\begin{align}\label{eq:householderreflector}
    \forall z \in \R^n \setminus \{0\}, \quad P(z) = I_n - \frac{2}{\|z\|^2} z \quad z^t,
\end{align}
or equivalently
\begin{align*}
    \forall z \in \R^n \setminus \{0\}, \quad \forall x \in \R^n, \quad P(z) \cdot x = x - \frac{2}{\|z\|^2} \, \langle z, x \rangle \, z.
\end{align*}
$P(z)^2 = I_n$, hence its name, and also $P(z)^t = P(z)$. These two properties combined show that $P(z)$ is an orthogonal matrix. The vector $z$ is an eigenvector associated to eigenvalue $-1$, and the orthogonal of its span is an $n-1$ dimensional eigenspace associated to eigenvalue $1$, i.e the Householder reflector is an orthogonal reflector with respect to the latter hyperplane. Given a vector $w \in \R^n$ that is not a multiple of $e_1$, and setting $z = w - \|w\| e_1$, one can straightforwardly derive,
$$P(z) \cdot w = \| w \| e_1,$$
hence the Householder reflector can be used to annihilate all the coordinates of a vector, except its first entry (it has been shown that the formula for $z$ can be refined into stabler formulas in finite precision, see~\cite[Chapter 19.1]{higham}). It can be inferred by induction that, given a vector $w  = c \oslash d \in \R^n$, $c \in \R^{j-1}$, $d \in \R^{n-j+1}$, there exists $v_j \in R^{n-j+1}$, $u_j = 0_{j-1} \oslash v_j \in \R^n$ such that
\begin{align*}
P(u_j) \cdot w = \left[ \begin{matrix} I_{j-1} & \\ & P(v_j) \end{matrix} \right] \cdot w = \left[ \begin{matrix} c \\ \| d \| e_1^{n-j+1} \end{matrix} \right] = \left[ \begin{matrix} c \\ \| d \| \\ 0_{n-j} \end{matrix} \right],
 \end{align*}
that is the reflector $P(u_j)$ now annihilates the coordinates $j+1$ to $n$ of the vector $w$, without modifying its first $j-1$ entries. By induction (this classic argument is detailed below in the proof of~\Cref{thm:mainthm}), there exist \textit{Householder vectors} $u_1, \cdots u_m$ such that
\begin{align} \label{eq:dethousefact} P(u_m) \cdots P(u_1) W = \left[ \begin{matrix} R \\ 0_{(n-m) \times m} \end{matrix} \right] \iff W = P(u_1) \cdots P(u_m) \left[ \begin{matrix} R \\ 0_{(n-m) \times m} \end{matrix} \right], \quad R \text{ upper-triangular }\end{align}
It was outlined in~\cite{schriebervanloan} that the composition $P(u_1) \cdots P(u_m)$ can be factored as:
\begin{align} \label{eq:basecompactformulas}
    \begin{dcases}
        \; \; P(u_1) \cdots P(u_m) = I_n - U T U^t \\
        \; \; P(u_m) \cdots P(u_1) = I_n - U T^t U^t
    \end{dcases}
\end{align}
where $U \in \R^{n \times m}$ is formed by $u_1, \hdots, u_m$ and $T \in \R^{m \times m}$ is an upper-triangular matrix. If we denote $\beta_j = 2/\|u_j\|^2$ for all $j \in \{1, \hdots ,m\}$, then $T$ defined by induction as:
$$T_1 = \left[\beta_1 \right], \quad \forall j \in \{1, \hdots , m-1\}, \quad T_{j+1} = \left[ \begin{matrix} & T_j & & & -\beta_{j+1} T_j U_j^t u_{j+1} \\ & 0_{1 \times j} & & & \beta_{j+1} \end{matrix} \right], \quad T := T_m$$
As a triangular matrix with a diagonal of non-zeros, $T$ is non-singular. As outlined in~\cite{puglisi}, we can then apply the Woodburry Morrison formula to $I_n - U T U^t$ and derive
$$U^t U = T^{-1} + T^{-t}, \quad T = \Bigl( \mathrm{sut} (U^t U) + \frac{1}{2} \mathrm{Diag}(U^t U) \Bigr)^{-1}$$
where $\mathrm{sut}$ denotes the strictly upper-triangular part and $\mathrm{Diag}$ denotes the matrix formed by the diagonal entries. We insist that both the explicit compositions of Householder reflectors and formulas~\cref{eq:basecompactformulas} are operators of $\R^n$, hence implicitly represented by $\R^{n \times n}$ matrices. The right-hand sides of~\cref{eq:basecompactformulas} are referred to as \textit{compact forms} of the compositions of Householder reflectors.

Whereas the Gram-Schmidt type algorithms produces the factors $Q \in \R^{n \times m}$ and an upper-triangular $R \in \R^{m \times m}$ such that $W=QR$, the Householder process outputs instead a composition of isometric operators $P(u_1) \cdots P(u_m) \in \R^{n \times n}$, and an upper-triangular factor $R \in \R^{m \times m}$ such that $W = P(u_1) \cdots P(u_m) \cdot \left[R; \; 0_{(n-m)\times m} \right]$. A matrix $Q \in \R^{n \times m}$ can still be output by Householder QR using the compact formulas~\cref{eq:basecompactformulas},
$$W = P(u_1) \cdots P(u_m) \begin{bmatrix} R \\ 0_{(n-m) \times m} \end{bmatrix} \iff W = \Bigl(\begin{bmatrix} I_m \\ 0_{(n-m)\times m} \end{bmatrix} - U T U^t \begin{bmatrix} I_m \\ 0_{(n-m)\times m} \end{bmatrix} \Bigr) \cdot R =: QR,$$
We refer to $Q \in \R^{n \times m}$ as the \textit{thin Q factor} associated to $U,T$. We stress that $U,T$ and~\cref{eq:basecompactformulas} are sufficient for the computation of $Q^t x, x \in \R^n$ for solving the least-squares problem. If $W$ admits the factorization in~\cref{eq:dethousefact}, then 
$$ \argmin_{x \in \R^m} \| W x - b \| = R^{-1} \cdot  \left[I_m \; \; 0_{m \times (n-m)} \right] \cdot \Bigl( P(u_m) \cdots P(u_1) \Bigr) \cdot b = R^{-1} \cdot \left[I_m \; 0_{m \times (n-m)} \right] \cdot (b - U T^t U^t b)$$
that is the application of $\left(P(u_1) \cdots P(u_m)\right)^{-1} = P(u_m) \cdots P(u_1)$, followed by the sampling of the first $m$ coordinates, followed by the backward solve with $R$. In this work, we denote the pair $(U,T)$ in~\cref{eq:basecompactformulas} as the \textit{implicit} $Q$ factor.

As described in~\cite{paige,barlow}, Charles Sheffield pointed out to Gene Golub that given a matrix $W \in \R^{n \times m}$, its modified Gram-Schmidt (MGS) QR factorization was equivalent to the Householder QR factorization of $\left[ 0_{m\times m}; \; \; W \right]$ in both exact and finite precision arithmetics. The analysis derived in~\cite{paige} shows that, in this context of finite precision, the factor $T$ computed by Householder QR captures the loss of orthogonality in Modified Gram-Schmidt. Accordingly, a BLAS2-version of MGS, based on Householder QR formulas, was derived in~\cite{barlow}.

All these principles and formulas find a straightforward randomized equivalent in the randomized Householder QR factorization (RHQR) that we derive in~\Cref{section:rhqr}.

\subsection{Subspace embeddings} \label{section:preliminaries:randomization}

\begin{definition}
\cite[Definition 1]{woodruff} We say that a matrix $\Omega \in \R^{\ell \times n}$ is an $\epsilon$-embedding of a vector-subspace $\mathcal{W} \subset \R^n$ if and only if 
\begin{align}\label{eq:subspaceembedding}
    \forall x \in \mathcal{W}, \; \; (1-\epsilon) \|x\| \leq \| \Omega x \| \leq (1+\epsilon) \|x\|.
\end{align}
\end{definition}
The $\epsilon$-embedding property is sometimes written as in~\cref{eq:subspaceembedding}, only with squared norms. The use of $\epsilon$-embedding matrices is obviously justified by one or other of these properties.
For any $m$-dimensional vector subspace $\mathcal{W} \subset \R^n$, there exist simple distributions over $\R^{\ell \times n}$ whose realizations $\Omega \in \R^{\ell \times n}$ drawn independently of $\mathcal{W}$ are an $\epsilon$-embedding of $\mathcal{W}$ with high probability. These distributions are called oblivious subspace embeddings (OSE). 
\begin{definition}
\cite[Definition 2]{woodruff} Let $\epsilon, \delta \in \left]0,1\right[$. Let $m \leq \ell \ll n \in \mathbb{N}^*$. We say that a distribution $\mathcal{D}$ over $\R^{\ell \times n}$ is an oblivious subspace embedding with parameters $(\epsilon, \delta, m)$, denoted ($\epsilon, \delta, m$)-OSE, if and only if for any $m$-dimensional vector subspace $\mathcal{W}_m \subset \R^n$, $\Omega$ drawn from $\mathcal{D}$ independently of $\mathcal{W}_m$ is an $\epsilon$-embedding of $\mathcal{W}_m$ with probability at least $1 - \delta$.
\end{definition}
As an example, we mention three such distributions. The Gaussian OSE is the simplest, and consists in drawing a matrix $G \in \R^{\ell \times n}$ where the coefficients are i.i.d standard Gaussian variables, and set
\begin{align*} 
    \Omega = \frac{1}{\sqrt{\ell}} \, G.
\end{align*}
This matrix is dense, hence the sketching $x \mapsto \Omega x$ has the cost of a dense matrix-vector multiplication in general. However its implementation on a distributed computer is very straightforward and scales well. Furthermore, instead of storing the matrix $\Omega$ itself, one can simply choose a seed and draw from the seeded random number generator during the product $x \mapsto \Omega x$. On a distributed computer, that seed may be communicated between nodes. As a Gaussian matrix, it benefits from a large set of results from random matrix theory. As shown in~\cite[Theorem 4]{woodruff}, it is a $(\epsilon, \delta, m)$-OSE  if the sampling size $\ell$ (that is the number of rows of $\Omega$) is $\mathcal{O}(\epsilon^{-2} \cdot (m + \log(1/\delta))$, that is a modest multiple of $\log(m)$.

A technique with smaller storage cost and faster application $x \mapsto \Omega x$ is given by the subsampled randomized Hadamard transform (SRHT) OSE, which writes
\begin{align*}
    \Omega = \sqrt{ \frac{n}{\ell}} \, P H D,
\end{align*}
where $P$ is made of $\ell$ rows of the identity matrix $I_n$ drawn uniformly at random (which corresponds to a uniform sampling step), $H$ is the normalized Walsh-Hadamard transform, and $D$ is a diagonal of random signs. This matrix is almost that presented in~\cite{ailon2009}, only with a uniform sampling $P$ instead of a hashing step. In most applications, this matrix is not explicitly stored, and is only given as a matrix-vector routine. In this case, its storage cost is negligible. Its application to a vector costs $\mathcal{O}(n \log n)$ flops when implemented with standard Walsh-Hadamard Transform. A synthetic analysis of this technique was proposed in~\cite{troppsrht}. This distribution is OSE($\epsilon,\delta,m$) if $\ell \in \mathcal{O}(\epsilon^{-2}(m+\log(n/\delta)) \log(m/\delta))$, which is also a modest multiple of $m$. We show in the present work that this sketching technique is very accurate in finite precision arithmetics. Authors in~\cite{bsrht} proposed a block version of this technique that is suited for parallel computing. While the Hadamard matrix is well suited for uniform sampling, as all its entries are of equal magnitude, the question of sampling from the rows of general orthogonal matrices is discussed in~\cite{ilse}. 

Finally, we mention distributions based on hashing techniques. These matrices can be obtained by specifying a number $s \leq \ell$ and setting their entries as
$$(\Omega)_{i,j} = \begin{dcases} +1/\sqrt{s} \text{ with probability } s/(2\ell) \\ -1/\sqrt{s} \text{ with probability } s/(2\ell) \\ 0 \text{ with probability } 1 -s/\ell\end{dcases}$$
In this case, the obtained sketching matrix $\Omega$ is sparse, and the user specifies the number of non-zero per columns.  These matrices are easy to generate and straightforward to implement on a parallel computer. Using sparse matrix formats and having an expected $ns$ non-zero entries, their application can be fast and their storage cost low. The lower the parameter $s$, the sparser the matrices $\Omega$ drawn from this distribution, the greater the sampling size required in order to be $(\epsilon, \delta, m)$-OSE.

Let us consider a set of linearly independent vectors $w_1, \hdots w_m$ forming a full-rank matrix $W \in \R^{n \times m}$ and spanning $\mathcal{W} = \mathrm{Range}(W)$. The largest and smallest \textit{singular values} of $W$ are defined as
$$\sigma_{\text{max}}(W) = \|W\|_2 = \max_{ \|x\|_2 = 1 } \|Wx \|_2 = \|W v_{\text{max}}\|_2, \quad \sigma_{\text{min}}(W) = \min_{\|x\|_2 = 1} \|Wx\|_2 = \|W v_{\text{min}}\|_2, \quad v_{\text{max}}, v_{\text{min}} \in \R^m$$
i.e the greatest dilatation and the greatest contraction of an input vector $x \in \R^m$. Their ratio also defines the condition number of $W$, as shown by the min-max theorem.
$$\mathrm{Cond}(W) = \sqrt{\frac{ \lambda_{\text{max}}(W^tW) }{\lambda_{\text{min}}(W^tW)}} = \frac{\sigma_{\text{max}}(W)}{\sigma_{\text{min}}(W)} = \frac{\| W v_{\text{max}} \|}{ \| W v_{\text{min}} \|}, \quad v_{\text{max}}, v_{\text{min}} \in \R^m.$$

If $\Omega \in \R^{\ell \times n}$ is an $\epsilon$-embedding for $\mathrm{Range}(W)$, then (see \cite{woodruff,rgs})
\begin{align*}
    \mathrm{Cond}(W) \leq \frac{1+\epsilon}{1-\epsilon} \, \mathrm{Cond}(\Omega W).
\end{align*}
In particular, if $Q \in \R^{n \times m}$ is a basis of $\mathcal{W}$ such that $\Omega Q \in \R^{\ell \times m}$ has orthonormal columns, the condition number of $Q$ is simply bounded by the constant $(1+\epsilon)/ (1-\epsilon)$. With $\epsilon = 1 / 2$, we get $\mathrm{Cond}(Q) \leq 3$. Hence sketch-orthogonal bases are well-conditioned bases. Furthermore, computations of the coordinates of $x \in \mathrm{Range}(Q)$ in a sketch-orthogonal bases is cheap, as one only needs to compute the coordinates of $\Omega x$ in the basis $\Omega Q$.

\section{Randomized Householder process} \label{section:rhqr}
In this section we introduce a randomized version of the Householder reflector and show its main properties. We then show how several randomized Householder reflectors can be used to factor $W = QR$ such that $\Psi W = (\Psi Q) \cdot R$ is the Householder factorization of $\Psi W$ (\Cref{thm:mainthm}). These computations yield the right-looking version of the RHQR factorization in~\Cref{algo:rhqr_rightlooking}. We then derive a compact representation of the composition of multiple randomized Householder reflectors, yielding our main contribution, the left-looking version of the RHQR factorization in~\Cref{algo:rhqr_leftlooking}. We insist that, thanks to randomization, the update of the T factor in RHQR is actually cheap. We also show a block version of RHQR in~\Cref{algo:brhqr}. Exploiting the equivalence between the Householder factorizations, we derive an algorithm reconstructing the RHQR factorization of $W$ from the Householder QR factorization of $\Psi W$ in~\Cref{algo:reconstructrhqr}.
\subsection{Algebra of the randomized Householder QR}
Let us set a matrix $\Theta \in \R^{\ell \times n}$, a vector $z \in \R^n \setminus \Ker{\Theta}$ and define the \textit{randomized Householder reflector associated to $\Theta$ and $z$}:
\begin{align} \label{eq:basereflector}
\forall \Theta \in \R^{\ell \times n}, \quad \forall z \in \R^n \setminus \Ker{\Theta}, \quad P(z, \Theta) = I_n - \frac{2}{\| \Theta z \|^2} \cdot z (\Theta z)^t \Theta
\end{align}
\begin{proposition}\label{prop:baseproperties}
    Let $P(z, \Theta)$ be defined as in~\cref{eq:basereflector}. The following properties hold:
    \begin{enumerate}
        \item $\forall \lambda \in \R^*, \quad P(\lambda z, \Theta) = P(z, \Theta)$
        \item $P(z, \Theta)^2 = I_n$, i.e $P(z, \Theta)$ is a (non orthogonal) reflector
        \item $\forall x \in \R^n, \quad \Theta \cdot P(z, \Theta) \cdot x = P(\Theta z) \cdot \Theta x$ where $P(\Theta z)$ denotes the standard Householder reflector of $\R^\ell$ associated to $\Theta z \in \R^\ell$. 
        \item $\forall x \in \R^n, \quad \| \Theta \cdot P(z, \Theta) \cdot x \|_2 = \| \Theta x \|_2$, (consequence of 3) We say that $P(z, \Theta)$ is \textbf{sketch-isometric}  with respect to $\Theta$.
    \end{enumerate}
\end{proposition}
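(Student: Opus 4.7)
The plan is to verify the four properties by direct algebraic manipulation, exploiting the key observation that the scalar $(\Theta z)^t \Theta z = \|\Theta z\|^2$ appears naturally whenever the rank-one operator $z(\Theta z)^t \Theta$ is composed with itself after multiplication by $\Theta$. I will treat each item in turn and reduce properties 2 and 4 to short computations rather than invoking any abstract geometric argument.

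For property 1, I would simply substitute $\lambda z$ into the definition \eqref{eq:basereflector}. The factor $\lambda^2$ coming from the rank-one product $(\lambda z)(\Theta(\lambda z))^t$ cancels exactly with the $\lambda^2$ in the denominator $\|\Theta(\lambda z)\|^2 = \lambda^2 \|\Theta z\|^2$, which uses only bilinearity. For property 2, writing $A = \tfrac{2}{\|\Theta z\|^2} z (\Theta z)^t \Theta$, I would expand $P(z,\Theta)^2 = I_n - 2A + A^2$ and compute
\[
A^2 \;=\; \frac{4}{\|\Theta z\|^4}\, z (\Theta z)^t \Theta z (\Theta z)^t \Theta \;=\; \frac{4}{\|\Theta z\|^4}\, z\, \|\Theta z\|^2\, (\Theta z)^t \Theta \;=\; 2A,
\]
using the associativity of matrix products to isolate the inner scalar $(\Theta z)^t\Theta z$. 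This gives $P(z,\Theta)^2 = I_n$ immediately.

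For property 3, which is really the heart of the proposition and the justification for the notation, I would left-multiply the definition of $P(z,\Theta)$ by $\Theta$ and apply it to $x$:
\[
\Theta \, P(z, \Theta)\, x \;=\; \Theta x - \frac{2}{\|\Theta z\|^2}\, (\Theta z)(\Theta z)^t (\Theta x) \;=\; \Bigl(I_\ell - \frac{2}{\|\Theta z\|^2}\,(\Theta z)(\Theta z)^t\Bigr)\Theta x,
\]
and recognize the right-hand factor as the standard Householder reflector $P(\Theta z)$ of $\R^\ell$ from \eqref{eq:householderreflector}. Property 4 then follows without further work, since $P(\Theta z)$ is an orthogonal matrix in $\R^{\ell\times\ell}$ and thus preserves the $\ell_2$-norm of $\Theta x$.

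I do not anticipate a real obstacle here: each item is a one- or two-line calculation, and the only algebraic identity one must be careful with is the scalar extraction $(\Theta z)^t (\Theta z) = \|\Theta z\|^2$ appearing in property 2. The substantive point of the proposition is property 3, which sets up the template used throughout \Cref{section:rhqr}: every randomized reflector in $\R^n$ acts on $\Theta$-sketches exactly like a classical Householder reflector on the sketched vector, and this will be the mechanism by which the main theorem of the section relates RHQR on $W$ to the standard Householder QR on $\Psi W$.
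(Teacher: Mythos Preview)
Your proposal is correct and follows essentially the same approach as the paper: direct algebraic verification of each item, with the scalar extraction $(\Theta z)^t(\Theta z)=\|\Theta z\|^2$ driving the computation for property~2 and the left-multiplication by $\Theta$ revealing the standard Householder reflector for property~3. The only cosmetic differences are that the paper first normalizes to $\|\Theta z\|^2=2$ (via property~1) before treating properties~2 and~4, and it verifies property~4 by expanding $\|\Theta P x\|^2$ directly rather than invoking property~3; your route through $A^2=2A$ and through the orthogonality of $P(\Theta z)$ is equally valid and arguably tidier.
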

\begin{proof}
    Let us denote $P := P(z, \Theta)$. First, for $\lambda \in \R^*$, we obtain:
    $$I_n - \frac{2}{\lambda^2 \| \Theta z \|^2} \cdot (\lambda z) \cdot ( \lambda \Theta z)^t \Theta = I_n - \frac{2}{\| \Theta z \|^2} \cdot \frac{\lambda^2}{\lambda^2} \cdot z \cdot( \Theta z)^t \Theta = P. $$
    For the next two points, we suppose that $\| \Theta z \|^2 = 2$. For $x \in \R^n$,
    $$P x = x - \langle \Theta x, \Theta z \rangle z.$$
    Remark also that $Pz = -z$. Then, 
    $$ P P x = P x + \langle \Theta x, \Theta z \rangle z = x - \langle \Theta x, \Theta z \rangle z  + \langle \Theta x, \Theta z \rangle z = x.$$
    Lastly, developping the squared norm,
    $$\|\Theta P x \|^2 = \|\Theta x \|^2 - 2 \langle \Theta x, \Theta z \rangle^2 + \langle \Theta x, \Theta z \rangle^2 \| \Theta z \|^2 = \|\Theta x \|^2 - 2 \langle \Theta x, \Theta z \rangle^2 + 2 \langle \Theta x, \Theta z \rangle^2 = \| \Theta x \|^2.$$
    For the third property, by definition of~\cref{eq:basereflector}, for all $x \in \R^n$, we get
    $$ \Theta \cdot P \cdot x = \Theta x - \frac{2}{\|\Theta z\|^2} \Theta z (\Theta z)^t \Theta x = \Bigl(I_\ell - \frac{2}{\| \Theta z \|^2} \Theta z (\Theta z)^t \Bigr) \cdot \Theta x = P(\Theta z) \cdot \Theta x$$
    where $P(\Theta z)$ is the Householder reflector of $\R^\ell$ associated to $\Theta z \in \R^\ell$ defined in~\cref{eq:householderreflector}.
\end{proof}
\begin{remark} \label{remark:sketchisometric}
    The sketch-isometric property of $P = P(z, \Theta)$ is equivalent to
    $$ P^t \Theta^t \Theta P = \Theta^t \Theta,$$
    while $P^t \Theta^t \Theta P$ is at most of rank $\ell$. 
\end{remark}
\begin{remark}
    If $\Theta$ comes from an OSE distribution and is drawn independently of the vectors considered, then it is in practice impossible that $z \in \Ker{\Theta}$.
\end{remark}

Compositions of multiple randomized Householder reflectors can be represented with compact formulas.

\begin{proposition}\label{prop:generalcompactwoodburry}
    Let $u_1, \cdots u_m \in \R^n$. Define for all $j \in \{1, \hdots, m\}$ the coefficients $\beta_j = 2/\| \Theta u_j \|^2$. Define by induction the matrix $T_j$ for all $1 \leq j \leq m$ as
    \begin{align*}
    \begin{split}
        & T_1 \in \R^{1 \times 1}, \; \; T_1 = \left[ \beta_1 \right] \\
        & \forall 1 \leq j \leq m-1, \; \; T_{j+1} = \left[ \begin{matrix} T_j & - \beta_j \cdot T_j(\Theta U_j)^t \Theta u_{j+1} \\ 0_{1 \times j} & \beta_j \end{matrix} \right]
        \end{split}
    \end{align*}
    Then, denoting $U \in \R^{n \times m}$ the matrix formed by $u_1, \hdots, u_m$ and $T = T_m$, we get the following compact representation:
    \begin{align}\label{eq:generalcompact}
        \begin{split}
            P(u_1, \Theta) \cdots P(u_m, \Theta) = I_n - U T (\Theta U)^t \Theta \\
            P(u_m, \Theta) \cdots P(u_1, \Theta) = I_n - U T^t (\Theta U)^t \Theta
        \end{split}
    \end{align}
    Furthermore, the factor $T$ verifies
    \begin{align}\label{eq:generalwoodburry}
        (\Theta U)^t \Theta U = T^{-1} + T^{-t}
    \end{align}
\end{proposition}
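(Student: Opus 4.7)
The plan is to prove the two compact identities of (3.2) and the Woodbury-type identity (3.3) jointly by induction on $m$, exploiting the fact that the proposed recursion for $T_{j+1}$ augments $T_j$ by a single column on the right and a single row at the bottom, so that every block-level calculation reduces cleanly to the case already handled.

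For (3.2), the base case $m=1$ is immediate from the definition $P(u_1,\Theta)=I_n-\beta_1\,u_1(\Theta u_1)^t\Theta$ and $T_1=[\beta_1]$. For the inductive step, I would assume $P(u_1,\Theta)\cdots P(u_j,\Theta)=I_n-U_j T_j(\Theta U_j)^t\Theta$ and multiply on the right by $P(u_{j+1},\Theta)=I_n-\beta_{j+1}u_{j+1}(\Theta u_{j+1})^t\Theta$. Expanding the product yields four terms; three of them must be gathered into a single expression of the form $U_{j+1}T_{j+1}(\Theta U_{j+1})^t\Theta$ with $U_{j+1}=[U_j\;|\;u_{j+1}]$. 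The block partitioning $(\Theta U_{j+1})^t=[(\Theta U_j)^t;\;(\Theta u_{j+1})^t]$ then forces exactly the block structure for $T_{j+1}$ announced in the statement. The second identity in (3.2) is obtained by the symmetric induction, appending $P(u_{j+1},\Theta)$ on the \emph{left}; the only change is that the off-diagonal block of the new factor is filled into the bottom-left corner rather than the top-right, which is precisely what transposes $T$ into $T^t$.

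For (3.3), I would induct again. The case $j=1$ is $\|\Theta u_1\|^2=2/\beta_1=T_1^{-1}+T_1^{-t}$. For the inductive step I would invert $T_{j+1}$ in block form, using the standard formula for a $2\times 2$ block upper-triangular inverse. The key cancellation
\[ -T_j^{-1}\cdot\bigl(-\beta_{j+1}\,T_j (\Theta U_j)^t\Theta u_{j+1}\bigr)\cdot\tfrac{1}{\beta_{j+1}}=(\Theta U_j)^t\Theta u_{j+1}\]
makes the off-diagonal block of $T_{j+1}^{-1}$ collapse to a clean inner product. Adding $T_{j+1}^{-t}$ and comparing block by block with the conformal partition of $(\Theta U_{j+1})^t\Theta U_{j+1}$, the top-left block matches by the inductive hypothesis, the off-diagonal blocks match by the computation above, and the bottom-right scalar matches because $2/\beta_{j+1}=\|\Theta u_{j+1}\|^2$.

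The main obstacle is purely bookkeeping: one must notice that the coefficient $-\beta_{j+1}$ in the definition of $T_{j+1}$ is chosen exactly so that both $\beta_{j+1}$ factors cancel during the block inversion, yielding the Gram-matrix entry $(\Theta U_j)^t\Theta u_{j+1}$ required on the off-diagonal of $(\Theta U_{j+1})^t\Theta U_{j+1}$. Once this alignment between the recursion for $T_{j+1}$ and the block structure of the Gram matrix is recognized, both inductive steps reduce to direct verification with no nontrivial estimates, and the argument is essentially the same as in the deterministic Householder case, with $\Theta$ sandwiched appropriately.
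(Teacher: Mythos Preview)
Your argument is correct. For the compact identities~\eqref{eq:generalcompact} you follow exactly the route the paper indicates: a straightforward induction in the style of Schreiber--Van~Loan, multiplying on the right (respectively left) by the next reflector and reading off the block structure of $T_{j+1}$ (respectively $T_{j+1}^t$).

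For the identity~\eqref{eq:generalwoodburry} you take a genuinely different path. The paper derives it in one stroke by applying the Woodbury--Morrison formula to $I_n-U T(\Theta U)^t\Theta$, using that its inverse is $I_n-U T^t(\Theta U)^t\Theta$ (each $P(u_j,\Theta)$ being an involution). You instead induct directly on the block-triangular structure of $T_{j+1}$, compute $T_{j+1}^{-1}$ via the $2\times2$ block inversion formula, and check that $T_{j+1}^{-1}+T_{j+1}^{-t}$ matches the conformal partition of $(\Theta U_{j+1})^t\Theta U_{j+1}$. Your route is more elementary and self-contained---it requires no external identity and makes transparent that the particular placement of $-\beta_{j+1}$ in the recursion is exactly what produces the clean off-diagonal Gram entries after inversion. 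The paper's route is shorter and more structural: it exploits the involution property globally rather than unwinding the recursion, and generalizes immediately to any pair of mutually inverse compact forms. Both are valid and neither hides any difficulty.
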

\begin{proof}
    The compact formulas can be derived by straightforward induction as in the deterministic case shown in~\cite{schriebervanloan}. Then, apply the Woodburry-Morrison formula to the left-wise and right-wise compositions as in~\cite{puglisi}.
\end{proof}
\begin{remark}
    As in the deterministic case, remark that $\beta$ can be computed without computing the norm of $\Omega u$, as $\beta = - \left[(w_1 - \| \Omega w \|)\|\Omega w_1\| \right]^{-1}$.
\end{remark}

Let us set a matrix $\Omega \in \R^{\ell \times (n-m)}$, and define the matrix
\begin{align}\label{eq:psi} \Psi = \left[ \begin{array}{c|ccccc} I_m & & & & & \\ \hline & & & & & \\ & & & \Omega & & \\ & & & & &\end{array} \right] \in \R^{(\ell+m) \times n}.\end{align}
Then the randomized Householder reflector associated to $z$ and $\Psi$ is
\begin{align} \label{eq:rhousereflector}
  \forall z \in \R^n \setminus \Ker{\Psi}, \quad P(z, \Psi) = I_n - \frac{2}{\|\Psi z \|^2} \cdot z \cdot (\Psi z)^t \Psi \in \R^{n \times n}
\end{align}

All properties from~\Cref{prop:baseproperties,prop:generalcompactwoodburry} apply to $P(z, \Psi)$ and to multiple compositions of those reflectors, simply replacing $\Theta$ by $\Psi$. We now show that the design of $\Psi$ allows to annihilate entries of a vector below a given index.

\begin{proposition}\label{prop:elimination}
    Let $j \in \{1, \hdots ,m\}$. Denote $c \in \R^{j-1}$ and $d \in \R^{n-j+1}$ such that $w = c \oslash d$. Define $w' = 0_{j-1} \oslash d$, and suppose that $w'$ is neither in the kernel of $\Psi$ nor a multiple of $e_j^n$. Define the randomized Householder vector as:
    \begin{align} \label{eq:rhousevector} u_j = w' - \| \Psi w' \| e_j \in \R^n.\end{align} 
    Then 
    $$P(u_j, \Psi) \cdot w = c \oslash (\| \Psi w' \| e_1^{n-j+1}) = \begin{bmatrix} c \\ \| \Psi w' \| \\ 0_{n-j} \end{bmatrix}.$$
\end{proposition}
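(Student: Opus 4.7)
The plan is to prove this by direct calculation, exploiting two structural properties: the block form of $\Psi$ makes its first $m$ rows act as the identity on the first $m$ coordinates, and the definition $u_j = w' - \|\Psi w'\| e_j$ is tailored so that the scalar $2(\Psi u_j)^t \Psi w / \|\Psi u_j\|^2$ coming from the reflector formula collapses to exactly $1$. Once that scalar is identified as $1$, the reflector reduces to $w \mapsto w - u_j$, which unpacks directly to the claimed vector.

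First I would record the consequences of the block form of $\Psi$. Since the first $m$ rows of $\Psi$ are $[I_m \mid 0]$ and $j \leq m$, we have $(\Psi x)_i = x_i$ for every $x \in \R^n$ and every $i \in \{1, \hdots, m\}$, and in particular $\Psi e_j = e_j^{\ell+m}$. Consequently $\Psi u_j = \Psi w' - \|\Psi w'\|\, e_j^{\ell+m}$. Moreover, $w$ and $w'$ agree on their last $n-j+1$ coordinates, so $\Psi w$ and $\Psi w'$ agree on every coordinate of index at least $j$ (within both the identity block and the $\Omega$ block), and differ only in the first $j-1$ coordinates, where $(\Psi w)_{1:j-1} = c$ and $(\Psi w')_{1:j-1} = 0$.

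Next I would compute the two quantities appearing in the reflector formula. From the coordinate-wise description above, $(\Psi w')^t \Psi w = \sum_{i \geq j} (\Psi w')_i^2 = \|\Psi w'\|^2$, and $(\Psi w)_j = w_j$, so
\begin{align*}
(\Psi u_j)^t \Psi w &= (\Psi w')^t \Psi w - \|\Psi w'\|\,(\Psi w)_j = \|\Psi w'\|\bigl(\|\Psi w'\| - w_j\bigr),\\
\|\Psi u_j\|^2 &= \|\Psi w'\|^2 - 2\|\Psi w'\|\,(\Psi w')_j + \|\Psi w'\|^2 = 2\|\Psi w'\|\bigl(\|\Psi w'\| - w_j\bigr).
\end{align*}
The hypothesis that $w'$ is neither in $\Ker{\Psi}$ nor a multiple of $e_j^n$ is precisely what ensures the common factor $\|\Psi w'\|(\|\Psi w'\| - w_j)$ is nonzero, so the ratio $2(\Psi u_j)^t \Psi w / \|\Psi u_j\|^2$ is well-defined and equals $1$.

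Finally I would conclude by substitution: $P(u_j, \Psi)\,w = w - u_j = w - w' + \|\Psi w'\|\, e_j$. Since $w - w' = c \oslash 0_{n-j+1}$ and $\|\Psi w'\|\, e_j = 0_{j-1} \oslash (\|\Psi w'\|\, e_1^{n-j+1})$, addition of the two concatenations yields $c \oslash (\|\Psi w'\|\, e_1^{n-j+1})$, which is the claimed identity. There is no serious obstacle here; the only thing to be careful about is the index bookkeeping between $\R^n$ and $\R^{\ell+m}$ and the verification that the first $m$ rows of $\Psi$ act as the identity, which is what makes the cancellation $2(\Psi u_j)^t\Psi w = \|\Psi u_j\|^2$ go through cleanly.
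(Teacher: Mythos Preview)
Your proof is correct and follows essentially the same approach as the paper's own proof: both establish that the block structure of $\Psi$ gives $\langle \Psi u_j, \Psi w\rangle = \langle \Psi u_j, \Psi w'\rangle$, compute $\langle \Psi u_j, \Psi w\rangle$ and $\|\Psi u_j\|^2$ explicitly (you write these in terms of $w_j$, the paper in terms of $\langle \Psi w', \Psi e_j\rangle$, which coincide since $\Psi e_j = e_j^{\ell+m}$), observe that the ratio collapses to $1$, and conclude $P(u_j,\Psi)\,w = w - u_j$. Your version adds a bit more explicit coordinate bookkeeping and an explicit justification that the denominator is nonzero, but the argument is the same.
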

\begin{proof} 
    First, remark that $\langle \Psi u_j, \Psi w \rangle = \langle \Psi u_j, \Psi w' \rangle$ because of $u_j$'s first $j-1$ zero entries coupled with the design of $\Psi$. Also by design of $\Psi$, remark that $\langle \Psi w, \Psi e_j \rangle = \langle \Psi w', \Psi e_j \rangle$. Developing the expression of $u_j$, we get:
    $$\langle \Psi w, \Psi u_j \rangle = \| \Psi w' \|^2 - \| \Psi w' \| \langle \Psi w', \Psi e_j \rangle.$$
    On the other hand, recalling that $\| \Psi e_j \| = 1$ (see~\eqref{eq:psi}),
    $$\| \Psi u_j \|^2 = 2 \| \Psi w' \|^2 - 2 \| \Psi w' \| \langle \Psi w', \Psi e_j \rangle,$$
    hence
    $$\frac{2}{\| \Psi u_j \|^2} \cdot \langle \Psi w, \Psi u_j \rangle = 1,$$
    which finally yields
    $$ P(u_j, \Psi) \cdot w = w - u_j = c \oslash \Bigl( d - d + \| \Psi w' \| e_1 \Bigr) = c \oslash \left( \| \Psi w' \| e_1 \right). $$
\end{proof}
Let us quickly point out that the formula in~\Cref{prop:elimination} suffers the same cancellation problems as in the deterministic case in finite precision arithmetics. Indeed, let us suppose that $w$ is almost a positive multiple of $e_1$, that is $w_1 = \lambda e_1 + g \in \R^n$, where $g$ is a vector of small norm, and $\lambda \in \R$. The vector $u_1$ as defined in~\Cref{prop:elimination} might suffer cancellations. It is then useful to flip the sign of $u_1$, and choose in general 
$$u_j := w' + \mathrm{sign} \langle w_j, e_j \rangle \| \Psi w' \| e_j^n,$$
with $w'$ from~\Cref{prop:elimination}. To sum up, denoting $c \in \R^{j-1}$ and $d \in \R^{n-j+1}$ such that $w = c \oslash d$,
\begin{align}\label{eq:sumup}
\begin{dcases}  \; \sigma_j = \mathrm{sign}\langle w, e_j \rangle \\ \;  \rho_j = \| \Psi (0_{j-1} \oslash d) \| \\ \;  u_j = (0_{j-1} \oslash d) + \sigma_j \rho_j e_j^n \end{dcases} \quad \implies  \quad  \; P(u_j, \Psi) \cdot w = \left[ \begin{matrix} c \\ -\sigma_j \rho_j \\ 0_{n-j} \end{matrix} \right] \in \R^n  
\end{align}

We can now state our main result.

\begin{theorem}\label{thm:mainthm} (simultaneous factorizations)
    Let $W \in \R^{n \times m}$, $W = \left[W_1; \; W_2 \right]$, $W_1 \in \R^{m \times m}$, and assume that $W_2$ is full-rank. Let $\Omega \in \R^{\ell \times (n-m)}$ such that $\Omega W_2$ is also full-rank. Define $\Psi$ as in~\cref{eq:psi}. Then there exist randomized Householder vectors $u_1, \hdots, u_m \in \R^n$ such that
    \begin{align} \label{eq:maintheorem} 
    \begin{dcases}
            \; W = P(u_1, \Psi) \cdots P(u_m, \Psi) \cdot \begin{bmatrix} R \\ 0_{(n-m) \times m} \end{bmatrix} = \Bigl(I_n - U T (\Psi U)^t \Psi \Bigr) \begin{bmatrix} R \\ 0_{(n-m) \times m} \end{bmatrix} = QR  \text{ (RHQR) } \\
            \; \Psi W = P(\Psi u_1) \,\cdots \, P(\Psi u_m) \cdot\begin{bmatrix}  R \\ \quad 0_{\ell \times m} \end{bmatrix}  = \Bigl(I_{\ell+m} - \Psi U T (\Psi U)^t \Bigr) \begin{bmatrix} R \\ \quad 0_{\ell \times m} \end{bmatrix} = (\Psi Q)R \text{ (Householder QR)}
            \end{dcases}
    \end{align}
    i.e the sketch of the RHQR factorization of $W$ is the Householder QR factorization of the sketch $\Psi W$. In particular, $(\Psi Q)^t \Psi Q = I_{m}$. If $\Omega$ is an $\epsilon$-embedding of $\mathrm{Range}(W_2)$, then $\Psi$ is an $\epsilon$-embedding of $\mathrm{Range}(W)$, and we get
    $$ \mathrm{Cond}(Q) \leq \frac{1+\epsilon}{1-\epsilon}$$
\end{theorem}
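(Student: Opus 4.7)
The plan is induction on the column index, paralleling the classical Householder proof but using the randomized reflector $P(\cdot,\Psi)$ and \Cref{prop:elimination} at each step. At step $j$, I assume by induction that randomized Householder vectors $u_1,\hdots,u_{j-1} \in \R^n$ have been produced so that $P(u_{j-1},\Psi) \cdots P(u_1,\Psi) \cdot W$ has zero entries below the diagonal in its first $j-1$ columns. Writing the $j$-th column of this intermediate matrix as $c \oslash d$ with $c \in \R^{j-1}$ and $d \in \R^{n-j+1}$, I invoke \Cref{prop:elimination} (with the stabler sign convention of~\cref{eq:sumup}) to define $u_j$ so that $P(u_j,\Psi)$ annihilates the entries below row $j$ of that column while leaving the first $j-1$ rows untouched (this is guaranteed by the top block $I_m$ in the design of $\Psi$). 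The precondition to verify is that $w' := 0_{j-1} \oslash d$ is neither a multiple of $e_j^n$ nor in $\Ker{\Psi}$: since item 3 of \Cref{prop:baseproperties} propagates sketch-isometry through the previous reflections and the trailing sketched block is $\Omega W_2$, which is full-rank by hypothesis, this precondition holds at every step. After $m$ iterations, $P(u_m,\Psi) \cdots P(u_1,\Psi) W = [R;\; 0_{(n-m) \times m}]$; using $P(\cdot,\Psi)^2 = I_n$ to invert yields the first line of~\cref{eq:maintheorem}, and the compact form $I_n - U T (\Psi U)^t \Psi$ follows directly from \Cref{prop:generalcompactwoodburry}.

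For the second line of~\cref{eq:maintheorem}, I left-multiply the RHQR factorization of $W$ by $\Psi$ and apply item 3 of \Cref{prop:baseproperties} iteratively, obtaining
\[
\Psi \cdot P(u_1,\Psi) \cdots P(u_m,\Psi) = P(\Psi u_1) \cdots P(\Psi u_m) \cdot \Psi.
\]
Because the top block of $\Psi$ is $I_m$, one has $\Psi \cdot [R;\; 0_{(n-m)\times m}] = [R;\; 0_{\ell \times m}]$, producing exactly the standard Householder QR factorization of $\Psi W$ with Householder vectors $\Psi u_1, \hdots, \Psi u_m$. The compact form~\cref{eq:basecompactformulas} applied to these vectors yields $I_{\ell+m} - (\Psi U) T' (\Psi U)^t$ for some upper-triangular $T'$; the inductive formula defining $T'$ in~\cite{schriebervanloan} coincides with that of the randomized $T$ in \Cref{prop:generalcompactwoodburry}, so $T' = T$. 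The identity $(\Psi Q)^t \Psi Q = I_m$ then follows from the orthogonality of the standard Householder $Q$ factor of $\Psi W$, or equivalently from the Woodbury identity~\cref{eq:generalwoodburry}.

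It remains to verify the $\epsilon$-embedding claim and bound $\mathrm{Cond}(Q)$. For any $\alpha \in \R^m$, the block structure of $\Psi$ gives $\|\Psi W \alpha\|^2 = \|W_1 \alpha\|^2 + \|\Omega W_2 \alpha\|^2$. Sandwiching $\|\Omega W_2 \alpha\|^2$ between $(1 \pm \epsilon)^2 \|W_2 \alpha\|^2$ via the hypothesis on $\Omega$, and using $(1-\epsilon)^2 \leq 1 \leq (1+\epsilon)^2$, produces $(1-\epsilon)^2 \|W \alpha\|^2 \leq \|\Psi W \alpha\|^2 \leq (1+\epsilon)^2 \|W \alpha\|^2$, so $\Psi$ is an $\epsilon$-embedding of $\mathrm{Range}(W)$. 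Since $R$ is invertible, $\mathrm{Range}(Q) = \mathrm{Range}(W)$, so the embedding applies to every $Qx$, $x \in \R^m$; combined with $\|\Psi Q x\| = \|x\|$, this gives $\|x\|/(1+\epsilon) \leq \|Q x\| \leq \|x\|/(1-\epsilon)$ and hence $\mathrm{Cond}(Q) \leq (1+\epsilon)/(1-\epsilon)$. I expect the only real obstacle to be the verification of the inductive precondition in the first paragraph; once that is dispatched, the rest is a clean translation of the deterministic Householder argument using the sketch-isometry of $P(\cdot,\Psi)$.
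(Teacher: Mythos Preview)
Your proposal is correct and follows essentially the same route as the paper: columnwise induction using \Cref{prop:elimination} to build the $u_j$'s, inversion via $P(\cdot,\Psi)^2 = I_n$, then left-multiplication by $\Psi$ and the commutation $\Psi P(u,\Psi) = P(\Psi u)\Psi$ from \Cref{prop:baseproperties} to obtain the standard Householder QR of $\Psi W$, with the $\epsilon$-embedding of $\Psi$ coming from Pythagoras. Your treatment is in fact slightly more explicit than the paper's in two places---you spell out that the two $T$ factors coincide via the identical recurrence, and you write out the $\epsilon$-embedding and $\mathrm{Cond}(Q)$ bounds in full---while the paper is marginally more concrete about why the inductive precondition holds (it notes that the last $n-m$ rows of the partially reduced matrix still span $\mathrm{Range}(W_2)$, whereas your phrase ``the trailing sketched block is $\Omega W_2$'' is literally true only at step $1$; the correct statement is that the sketched current matrix retains full rank because the standard reflectors $P(\Psi u_k)$ are orthogonal).
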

\begin{proof}
Let us first compute $u_1$ as in~\Cref{prop:elimination} such that the first column of $W$ gets all its entries below the first one cancelled. Since $W_2$ and $\Omega W_2$ are full-rank, $u_1$ is well-defined and is not in the kernel of $\Psi$. We get
$$P(u_1, \Psi) \cdot W = \left[ \begin{array}{c|cccc} r_{1,1} & r_{1,2} & \cdots & & r_{1,m} \\ \hline 0 & & & &\\ \vdots & & & & \\ \vdots & & * & &  \\ \vdots & & & & \\ 0 & & & & \end{array} \right] \in \R^{n \times m}$$
Let us now denote $w$ the second column of $P(u_1, \Psi) \cdot W$, and compute $u_2$ such that all the entries of $w$ below the second one are cancelled. The span of the last $n-m$ rows of $P(u_1, \Psi)$ is still that of $W_2$. Since $W_2$ and $\Omega W_2$ are full rank, $u_2$ is well-defined and is not in the kernel of $\Psi$. Furthermore, as shown in~\Cref{prop:elimination}, the reflector $P(u_2, \Psi)$ does not modify the first row of the input matrix. We get
$$P(u_2, \Psi) P(u_1, \Psi) \cdot W = \left[ \begin{array}{c|c} \begin{matrix} r_{1,1} & r_{1,2} \\ 0 & r_{2,2} \end{matrix} & \begin{matrix} r_{1,3} & \cdots & r_{1,m} \\ r_{2,3} & \cdots & r_{2,m} \end{matrix} \\ \hline \begin{matrix} 0 \; \; \; \;  & 0 \\ \vdots \; \; \; \; & \vdots \\ \vdots \; \; \; \; & \vdots \\ \vdots \; \; \; \; & \vdots \\ 0 \; \; \; \; & 0 \end{matrix} &  \begin{matrix} & & \\ & & \\ & * & \\ & & \\ & & \end{matrix} \end{array} \right] \in \R^{n \times m}$$
Iterating the argument and denoting $P_j = P(u_j, \Psi)$ for all $j \in \{1, \hdots , m\}$, we get
\begin{align} \label{eq:triangularization} P_m P_{m-1} \cdots P_1 W = \left[ \begin{matrix} R \\ 0_{(n-m) \times m} \end{matrix} \right]\end{align}
where $R \in \R^{m \times m}$ is upper-triangular by construction. Recalling that $P_j^2 = I_n$ for all $j \in \{1, \hdots , m\}$, we deduce
\begin{align} \label{eq:rhqrfact} W = P_1 P_2 \cdots P_m \left[ \begin{matrix} R \\ 0_{(n-m) \times m} \end{matrix} \right]\end{align}
If we sketch the above equation, we get
$$\Psi W = \Psi P_1 P_2 \cdots P_m \begin{bmatrix} R \\ 0_{(n-m)\times m} \end{bmatrix}$$
Using the third point of~\Cref{prop:baseproperties}, we get
$$\Psi W = P(\Psi u_1) \cdots P(\Psi u_m) \cdot \Psi \begin{bmatrix} R \\ 0_{(n-m)\times m} \end{bmatrix} = P(\Psi u_1) \cdots P(\Psi u_m) \cdot \begin{bmatrix} R \\ 0_{\ell \times m} \end{bmatrix}$$
where $P(\Psi u_1), \hdots P(\Psi u_m)$ are the standard Householder reflectors of $\R^{_ell +m}$ associated with vectors $\Psi u_1 \hdots \Psi u_m \in \R^{\ell + m}$, which shows the equivalence of the two factorizations. The thin Q factor of the RHQR factorization is
$$Q = P_1 \cdots P_m \begin{bmatrix} I_m \\ 0_{(n-m) \times m} \end{bmatrix}$$
hence
$$\Psi Q = P(\Psi u_1) \cdots P(\Psi u_m) \cdot \begin{bmatrix} I_m \\ 0_{\ell \times m} \end{bmatrix}$$
which is the thin Q factor of the Householder QR factorization of $\Psi W$, hence it is orthogonal, hence $Q$ is sketch-orthogonal.

Finally, the $\epsilon$-embedding property of $\Psi$ follows from Pythagorea's theorem and the $\epsilon$-embedding property of $\Omega$.
\end{proof}

\subsection{Algorithms}

\begin{algorithm}
\caption{Sketching routine for randomized Householder QR}\label{algo:sketchroutine}    
\KwInput{$X \in \R^{n \times k}, \; \Omega \in \R^{\ell \times (n-m)}$}
\KwOutput{$\Psi X\in \R^{(\ell+m)\times k}$ with $\Psi$ defined in~\cref{eq:psi}}
\SetKwFunction{RhSk}{RHSketch}
\SetKwProg{Fn}{function}{:}{}
\Fn{\RhSk{$X, \; \Omega$}}{
$A \gets X_{1:m,\;1:k}$ \\
$B \gets \Omega X_{m+1:n,\; 1:k}$ \\
\KwRet $\left[A; \; B \right]$
}
\end{algorithm}

We detail the modified sketching routine $x \mapsto \Psi x$ in~\Cref{algo:sketchroutine}. We detail the randomized Householder vector computation in~\Cref{algo:rhousevector} (RHVector). We stress that in~\Cref{algo:rhousevector} and in a distributed environment, supposing that sketches are available to all processors after synchronization, then by design of $\Psi$ the scalings in~\cref{line:scale1} do not require further synchronization. The scaling in~\cref{line:scale1}, which is posterior to the sketch of $w$, is more precise than sketching again the obtained vector $u$.

\begin{algorithm}
\caption{Computation of randomized Householder vector}\label{algo:rhousevector}    
\KwInput{$w \in \R^n, \; y \in \R^{\ell+m}$ such that $y = $ \RhSk{$w,\Omega$},$ \; j \in \{1 \hdots m\}$ }
\KwOutput{$u, s, \sigma, \rho, \beta$ such that $s = $ \RhSk{$u, \Omega$} and $\beta = 2/\|s\|^2$ and such that \RhSk{$P(u) w, \; \Omega$}$ = w_{1:j-1} \oslash (-\sigma \rho) \oslash 0_{\ell+m-j}$}
\SetKwFunction{Rh}{RHVector}
\SetKwProg{Fn}{function}{:}{}
\Fn{\Rh{$w$, $y$, $j$}}{
$\sigma \gets $ sign of $j$-th entry of $y$ \\
$u \gets 0_{j-1} \oslash w_{j:n}$ \\
$s \gets 0_{j-1} \oslash y_{j:\ell+m}$\\
$\rho \gets \| s \|$ \\
Add $\sigma \rho$ to the $j$-th entry of $u$ \\
Add $\sigma \rho$ to the $j$-th entry of $s$ \\
$\gamma \gets $ new $j$-th entry of $s$ \\
$\beta \gets (\rho \gamma)^{-1}$ \\
Choose $\begin{dcases} u \gets \sqrt{\beta} \cdot u \\ s \gets \sqrt{\beta} \cdot s \\ \beta \gets 1 \end{dcases}\; \;$ (in turn $\|s\| = \sqrt{2}$) or $ \quad\begin{dcases} u \gets \gamma^{-1} u \\ s \gets \gamma^{-1} s  \\ \beta \gets \gamma/\rho\end{dcases} \; \; $ (in turn $\langle u, e_j \rangle = 1$) \label{line:scale1}\\
\KwRet $u, s, \sigma$, $\rho$, $\beta$
}
\end{algorithm}
\comment{
\begin{algorithm}
\caption{Computation of randomized Householder vector}\label{algo:rhousevector}    
\KwInput{$d \in \R^{n-j+1}$ where $j \in \{1, \hdots,m \}$, $\Omega \in \R^{(\ell-m) \times n}$}
\KwOutput{$v, v', \sigma, \rho, \beta$ such that $0_{j-1} \oslash v' = \Psi (0_{j-1} \oslash v)$ and $P(0_{j-1} \oslash v, \Psi) \cdot (c \oslash d) = c \oslash (-\sigma \rho e_j^n)$ for all $c \in \R^{j-1}$, with $\Psi$ defined in~\cref{eq:psi}}
\SetKwFunction{Rh}{RHVector}
\SetKwProg{Fn}{function}{:}{}
\Fn{\Rh{$y$, $\Omega$}}{
$v \gets d$ \\
$a \gets $ first $m-j+1$ coordinates of $v$ \\
$b \gets $ last $n-m$ coordinates of $v$ \\
Sketch $\Omega b$ \\
$v' \gets a \oslash \Omega b$ \\ 
$\sigma \gets$ sign of first entry of $v'$ \\
$\rho \gets \| v' \|$ \\
Add $\sigma \rho$ to the first entry of $v$ and to the first entry of $v'$ \\
Choose $\alpha = $ first entry of $v'$ or $\alpha = \|v'\|/\sqrt{2}$ \label{line:alpha} \\
$v \gets v/\alpha$  \\
$v' \gets v'/\alpha$  \\
$\beta \gets 2/\|v'\|^2$ \quad \textcolor{black}{\# for second choice of $\alpha$, just set $\beta = 1$} \\
\KwRet $v, v', \sigma$, $\rho$, $\beta$
}
\end{algorithm}
}

The RHQR process, as presented in its simplest form in the proof of~\Cref{thm:mainthm}, is described in~\Cref{algo:rhqr_rightlooking} (right-looking RHQR). One sketching is performed in~\cref{line:rhsketch}, which induces one synchronization. This algorithm can be performed almost in place, since $U$ is lower-triangular by construction, and $R$ is upper-triangular by construction. Both can almost fit in the original $W$, while the diagonal of either $R$ or $U$ can be stored in some additional space. Then, a small additional space is required for $\Psi U$ ($S$ in the algorithm). We stress that, in a distributed environment where the sketches are available to all processors after synchronization, then all processors have access to the $m \times m$ upper-block of $W,R,U$ by design of $\Psi$. We show in~\Cref{section:finiteprecision} that this process is as stable as Householder QR in every aspects. This version of RHQR may be the simplest conceptually, it is clearly not optimal in terms of flops, as it requires to sketch $m-j+1$ vectors at step $j$. This issue is addressed by the left-looking RHQR factorization.

\begin{algorithm}
\caption{Randomized-Householder QR factorization (right-looking)}\label{algo:rhqr_rightlooking}
\KwInput{Matrix $W \in \mathbb{R}^{n\times m}$, matrix $\Omega \in \mathbb{R}^{\ell \times (n-m)}$, $m < l \ll n-m$}
\KwOutput{$U \in \R^{n \times m}, \; S \in \R^{(\ell+m)\times m}, \; R \in \R^{m \times m}$  such that $S = $ \RhSk{$U, \Omega$}, and \RhSk{$W, \Omega$} = $P(u_1) \cdots P(u_m) \cdot \left[R; \; 0_{(\ell \times m} \right]$}
\SetKwFunction{Rh}{RHVector}
\SetKwFunction{Rhqr}{RHQR\_right}
\SetKwProg{Fn}{function}{:}{}
\Fn{ \Rhqr{$W$, $\Omega$}}{
\For{$j = 1:m$}{
$w \gets w_j$ \\
$\left[y \; \; Y \right] \gets$ \RhSk{$W_{j:m}, \; \Omega$} \label{line:rhsketch}\\
$u_j, s_j, \rho, \sigma, \beta$ = \Rh{$w$, $y$, $j$} \label{line:rhvec} \\
$r_j \gets w_{1:j-1} \oslash (-\sigma \rho) \oslash 0_{m-j}$ \\
\If{$j < m$}{
$W_{j+1:m} \gets W_{j+1:m} - \beta \cdot u_j \cdot s_j^t \,Y$  \\
}
}
\KwRet $R = (r_j)_{j \leq m}$, $U = (u_j)_{j \leq m}$, $S = (s_j)_{j \leq m}$
}
\end{algorithm}

\comment{
\begin{algorithm}
\caption{Randomized-Householder QR factorization (right-looking)}\label{algo:rhqr_rightlooking}
\KwInput{Matrix $W \in \mathbb{R}^{n\times m}$, matrix $\Omega \in \mathbb{R}^{\ell \times (n-m)}$, $m < l \ll n-m$}
\KwOutput{$R, U, S$, such that $S = \Psi U$ and~\cref{eq:maintheorem} holds}
\SetKwFunction{Rh}{RHVector}
\SetKwFunction{Rhqr}{RHQR\_right}
\SetKwProg{Fn}{function}{:}{}
\Fn{ \Rhqr{$W$, $\Omega$}}{
\For{$j = 1:m$}{
$r_j \gets (w_j)_{1:j-1}$\\
$w \gets w_j$ \\
$v, v', \rho, \sigma, \beta$ = \Rh{$(w)_{j:n}$, $\Omega$} \label{line:rhvec} \\
$u_j \gets 0_{j-1} \oslash v$ \\
$s_j \gets 0_{j-1} \oslash v'$ \\
$r_j \gets r_j \oslash (-\sigma \rho) \oslash 0_{m-j}$ \\
\If{$j < m$}{
Sketch $\Omega W_{m+1:n, j+1:m}$ \label{line:sketchR}\\
$X \gets \left[W_{1:m,j+1:m}; \; \Omega W_{m+1:n, j+1:m} \right]$ \quad \# i.e $\; X \gets \Psi W$ \\
$W_{j+1:m} \gets W_{j+1:m} - \beta \cdot u_j \cdot s_j^t \,X$  \\
}
}
\KwRet $R = (r_j)_{j \leq m}$, $U = (u_j)_{j \leq m}$, $S = (s_j)_{j \leq m}$
}
\end{algorithm}
}

In the light of the compact formulas~\cref{eq:generalcompact} for the composition of multiple randomized Householder reflectors, we introduce the main algorithm for computing the RHQR factorization in~\Cref{algo:rhqr_leftlooking}. Only the $j$-th column of $W$ is modified at iteration $j$. At the beginning of this iteration, the compact form of the previously computed reflectors is used to apply them to the $j$-th column of $W$. Then the randomized Householder reflector of the resulting column is computed, and the compact factorization is updated. After refreshing the vector $w$ in~\cref{line:refreshwj}, one synchronization is made at~\cref{line:leftsketch}. If the next vector $w_{j+1}$ is already available (e.g QR factorization, $s$-step and block Krylov methods), its sketch can be computed in the same synchronization, hence avoiding a further synchronization in~\cref{line:leftoptsketch}. This algorithm can be executed almost in place, with the same recommendations as for the right-looking version, and a minor additional storage space for the matrix $T$. The sole high-dimensional operations in~\Cref{algo:rhqr_leftlooking} are the two sketches in~\Cref{line:leftsketch,line:leftoptsketch} (again, the second is optional), and the update of the $j$-th column in~\Cref{line:refreshwj}. The cost of the update is dominated by that of $U_{j-1} \cdot (T_{j-1}^t S_{j-1}^t z)$, which is $2nj$ flops, for a final cost of $2nm^2$ flops. Unlike the Householder QR factorization, updating the $T$ factor has a negligible cost. The two sketches add $2n \log(n)$ flops. The cost of the whole factorization is then essentially $2nm^2 +2n \log(n)$. This shows that the left-looking RHQR requires twice less flops than Householder QR, essentially the cost of an LU factorization of $W$ or that of RGS. We note that, similarly to RGS, mixed precision can be used. For example, one could use half or single precision for storing and computing with high-dimensional matrices , while performing the low-dimensional operations in double precision, see experiments in~\Cref{section:experiments}. 

\begin{algorithm}
\caption{Randomized-Householder QR (RHQR) (left-looking)}\label{algo:rhqr_leftlooking}
\KwInput{Matrix $W \in \mathbb{R}^{n\times m}$, matrix $\Omega \in \mathbb{R}^{\ell \times (n-m)}$, $m < l \ll n$}
\KwOutput{$U \in \R^{n \times m}, \; S \in \R^{(\ell+m)\times m}, \; T,R \in \R^{m \times m}$  such that $W = (I_n - U T S^t) \cdot \left[R; \; 0_{(n-m) \times m} \right]$ and \RhSk{$W,\Omega$} = $(I_{\ell+m} - S T S^t) \cdot \left[R; \; 0_{\ell \times m} \right]$}
\SetKwFunction{Rh}{RHVector}
\SetKwFunction{Rhqr}{RHQR\_left}
\SetKwProg{Fn}{function}{:}{}
\Fn{ \Rhqr{$W$, $\Omega$}}{
\For{$j = 1:m$}{
$w \gets w_j$ \\
$y \gets $ \RhSk{$w, \; \Omega$} \label{line:leftoptsketch} \\
\If{$j \geq 2$}{
$w_j \gets w_j - U_{j-1} T_{j-1}^t S_{j-1}^t y$  \quad \textcolor{black}{\#$U_{j-1} = (u_i)_{i \leq j-1}$, idem $S_{j-1}$, $T_{j-1} = (t_{i,k})_{1 \leq i,k \leq j-1}$} \label{line:refreshwj}\\ 
$y \gets $ \RhSk{$w, \; \Omega$} \quad \quad \quad \# if next vector $w_{j+1}$ available, sketch it now \label{line:leftsketch} \\
}
$u_j, s_j, \rho, \sigma, \beta = $ \Rh{$w, \; y, \; j$},  \\
$r_j \gets (w)_{1:j-1} \oslash (-\sigma \rho) \oslash 0_{m-j}$ \\
$t_j \gets 0_{j-1} \oslash \beta \oslash 0_{m-j}$ \\
\If{$j \geq 2$}{
$(t_j)_{1:j-1} = - \beta \cdot T_{j-1} S_{j-1}^t s_j$ \\ 
}
}
\KwRet $R = (r_j)_{j \leq m}, \; U = (u_j)_{j \leq m}, \; S = (s_j)_{j \leq m}, \; T = (t_j)_{j \leq m}$
}
\end{algorithm}
\comment{
\begin{algorithm}
\caption{Randomized-Householder QR (RHQR) (left-looking)}\label{algo:rhqr_leftlooking}
\KwInput{Matrix $W \in \mathbb{R}^{n\times m}$, matrix $\Omega \in \mathbb{R}^{\ell \times n}$, $m < l \ll n$}
\KwOutput{$R, U$, $S$, $T$ such that $S = \Psi U$, with $\Psi$ as in~\cref{eq:psi}, and such that~\cref{eq:maintheorem} holds. Q factor implicit : $Q = (I_n - U T S^t) \left[I_m; \; 0_{\ell \times m} \right]$, $W = QR$.  }
\SetKwFunction{Rh}{RHVector}
\SetKwFunction{Rhqr}{RHQR\_left}
\SetKwProg{Fn}{function}{:}{}
\Fn{ \Rhqr{$W$, $\Omega$}}{
\For{$j = 1:m$}{
$w \gets w_j$ \\
\If{$j \geq 2$}{
Sketch $\Omega (w)_{m+1:n}$ \label{line:sketchz1} \\
$z \gets (w)_{1:m} \oslash \Omega (w)_{m+1:n}$ \quad \# i.e $\; z \gets \Psi w$  \\
$w \gets w_j - U_{j-1} T_{j-1}^t S_{j-1}^t z$  \quad \textcolor{black}{\#$U_{j-1} = (u_i)_{i \leq j-1}$, idem $S_{j-1}$, $T_{j-1} = $ first $j-1$ rows of $(t_i)_{i \leq j-1}$} \label{line:refreshwj}\\ 
}
$v, v', \rho, \sigma, \beta = $ \Rh{$(w)_{j:n}$, $\Omega$},  \label{line:leftrhvec}\\
$u_j \gets 0_{j-1} \oslash v$ \\
$s_j \gets 0_{j-1} \oslash v'$ \\
$r_j \gets (w)_{1:j-1} \oslash (-\sigma \rho) \oslash 0_{m-j}$ \\
$t_j \gets 0_{j-1} \oslash \beta \oslash 0_{m-j}$ \\
\If{$j \geq 2$}{
$(t_j)_{1:j-1} = - \beta \cdot T_{1:j-1,1:j-1} S_{j-1}^t s_j$ \\ 
}
}
\KwRet $R = (r_j)_{j \leq m}, \; U = (u_j)_{j \leq m}, \; S = (s_j)_{j \leq m}, \; T = (t_j)_{j \leq m}$
}
\end{algorithm}
}

If the thin Q factor is needed, one needs to perform the following operation:
$$ Q = \begin{bmatrix} I_m \\ 0_{(n-m) \times m} \end{bmatrix} - U T U_{1:m,1:m}^t.$$
The cost is dominated by that of the matrix-matrix multiply $U_m \cdot ( T_m U_{1:m,1:m}^t)$, which is $2nm^2$.

To conclude this section, we detail in~\Cref{algo:brhqr} a block version of~\Cref{algo:rhqr_leftlooking}. The matrix $W \in \R^{n \times (mb)}$ is now considered as formed by vertical slices $W^{(1)}, \cdots W^{(b)} \in \R^{n \times b}$. \Cref{algo:rhqr_leftlooking} is then applied consecutively to each block. For each $W^{(j)}$, we denote $U_m^{(j)}, \Psi U_m^{(j)}, T_m^{(j)}$ the output of~\Cref{algo:rhqr_leftlooking} applied to $W^{(j)}$. As to the storage space and the communications, we refer to the comments made for~\Cref{algo:rhqr_leftlooking}. The update of columns $(j-1)b + 1, \hdots , jb$ of $W$ is presented as such in~\cref{line:eraseW} for simplicity, however it should be done in place.

\begin{algorithm}
\caption{Block Randomized-Householder QR (block RHQR)}\label{algo:brhqr}
\KwInput{$W = \left[W^{(1)} \cdots W^{(b)}\right] \in \mathbb{R}^{n\times mb}$, matrix $\Omega \in \mathbb{R}^{(\ell+mb) \times (n-mb)}$, $mb < l \ll n$ }
\KwOutput{$R,(U^{(j)}, S^{(j)} \, T^{(j)})_{1 \leq j \leq b}$ such that $W = \Bigl(\prod_{j=1}^b (I_n - U^{(j)} T^{(j)} (S^{(j)})^t) \Bigr) \cdot \left[R; \; 0_{ \ell \times mb} \right]$}
\SetKwFunction{leftrhqr}{RHQR_left}
\SetKwFunction{blockrhqr}{blockRHQR}
\SetKwProg{Fn}{function}{:}{}
\Fn{ \blockrhqr{$W$, $\Omega$}}{
\For{$j = 1:b$}{
$W \gets W^{(j)}$ \\
\For{$k = 1:j-1$}{
$Y \gets $ \RhSk{$W, \Omega$} \\
$W \gets W - U^{(j)} (T^{(j)})^t (S^{(j)})^t Y$  \label{line:eraseW}\\
}
$R_{1:(j-1)b, \; (j-1)b+1:jb} \gets W_{1:(j-1)b, \; (j-1)b+1:jb}$ \\
$R_{(j-1)b+1:jb, \; (j-1)b+1:jb}, U^{(j)}, S^{(j)}, T^{(j)} = $ \Rhqr{$W, \Omega$} \\
}
Return $R$, $\left\{ U^{(j)}, S^{(j)}, T^{(j)} \right\}_{j \leq b}$ 
}
\end{algorithm}

\comment{
\begin{algorithm}
\caption{Block Randomized-Householder QR (block RHQR)}\label{algo:brhqr}
\KwInput{$W = \left[W^{(1)} \cdots W^{(b)}\right] \in \mathbb{R}^{n\times mb}$, matrix $\Omega \in \mathbb{R}^{(\ell+mb) \times (n-mb)}$, $mb < l \ll n$ }
\KwOutput{$R,(U^{(j)}, S^{(j)} \, T^{(j)})_{1 \leq j \leq b}$ such that $W = \Bigl(\prod_{j=1}^b (I_n - U^{(j)} T^{(j)} (S^{(j)})^t) \Bigr) \cdot \left[R; \; 0_{ \ell \times mb} \right]$}
\SetKwFunction{leftrhqr}{RHQR_left}
\SetKwFunction{blockrhqr}{blockRHQR}
\SetKwProg{Fn}{function}{:}{}
\Fn{ \blockrhqr{$W$, $\Omega$}}{
\For{$j = 1:b$}{
$M \gets W^{(j)}$ \\
\For{$k = 1:j-1$}{
Sketch $\Omega M_{mb+1:n,1:m}$ \\
$Z \gets \left[ M_{1:mb,1:m}; \; \Omega M_{mb+1:n,1:m} \right]$  \quad \# i.e $Z \gets \Psi M = \Psi W^{(j)}$\\
$M \gets M - U^{(j)} (T^{(j)})^t (S^{(j)})^t Z$  \label{line:eraseW}\\
Sketch $\Omega M_{mb+1:n,1:m}$ \\
$Z \gets \left[M_{1:mb,1:m}, \Omega M_{mb+1:n,1:m} \right]$ \\
}
$R_{1:(j-1)b, \; (j-1)b+1:jb} \gets M_{1:(j-1)b, \; (j-1)b+1:jb}$ \\
$R^{(j)}, U^{(j)}, S^{(j)}, T^{(j)} = $ \Rhqr{$W^{(j)}, \Omega$} \\
$R_{(j-1)b+1:jb, \;(j-1)b+1,jb} \gets R^{(j)}$ 
}
Return $R$, $\left\{ U^{(j)}, S^{(j)}, T^{(j)} \right\}_{j \leq b}$ 
}
\end{algorithm}
}

\subsection{RHQR reconstructed from HQR}

If $W$ is entirely available at the begining of the procedure, we remark that in exact arithmetics, all elements of RHQR can be deduced from the output of the Householder QR factorization of $\Psi W$. Indeed, in the light of~\Cref{thm:mainthm}, the latter writes
$$\begin{bmatrix} W_1 \\ \Omega W_2 \end{bmatrix} = \Biggl( I_{\ell+m} - \begin{bmatrix} U_1 \\ \Omega U_2 \end{bmatrix} T \left[ U_1^t \; \; (\Omega U_2)^t \right] \Biggr) \begin{bmatrix} R \\ 0_{\ell \times m} \end{bmatrix} \quad \text{(HQR factorization)},$$
where $U_1$, $U_2$ denote respectively the first $m$ rows and last $n-m$ rows of $U_m$ output by RHQR of $W$, $T$ is their shared T factor, and $R$ is their shared R factor. To retrieve $U_2$, we may follow the construction of the randomized Householder vectors $u_1, \hdots, u_m$, focusing on their last $n-m$ entries $u_1^{(2)}, \hdots, u_m^{(2)}$. With the same notations as in~\Cref{algo:rhousevector,algo:rhqr_leftlooking}, $u_1^{(2)}$ is simply $w_1^{(2)}$ divided by $\alpha_1$, where $\alpha_1$ denotes the scaling coefficient chosen in~\Cref{algo:rhousevector} ($\sqrt{\beta}$ or $\gamma$). Then for $u_j^{(2)}, j \geq 2$,
\begin{center}
\begin{enumerate}
    \item $w^{(2)} \gets w_j^{(2)} - U_2 T^t (\Psi U)^t \Psi w_j x$
    \item $u_j^{(2)} \gets \alpha_j^{-1} \cdot w^{(2)}$
\end{enumerate}
\end{center}
which corresponds overall to the formula $W_2 = U_2 T^t (\Psi U)^t \Psi W.$ This formula should be consistent with that obtained by writing the last $n-m$ lines of the RHQR factorization of $W$, namely $W_2 = -U_2 T U_1^t R$. Indeed,
$$ \Psi W = \begin{bmatrix} R \\ 0_{\ell \times m} \end{bmatrix} - \Psi U_m \cdot T_m \cdot U_1^t R$$
Then
$$(\Psi U)^t \Psi W = U_1^t R - (\Psi U)^t \Psi U \cdot T U_1^t R  = U_1^t R - (T^{-1} + T^{-t}) \cdot T U_1^t R = -T^{-t} T U_1^t \cdot R $$
Finally multiplying on the left by $T^t$, we get $T^t (\Psi U)^t \Psi W = -T U_1^t R$ (which shows that it is upper-triangular), hence $U_2 T^t (\Psi U)^t \Psi W = U_2 T U_1^t R$. Also, according to~\Cref{thm:mainthm}, the coefficient $\alpha_j$ scaling the $j$-th randomized Householder is equal to that scaling the $j$-th Householder vector in Householder QR of $\Psi W$. These computations yield~\Cref{algo:reconstructrhqr} (recRHQR). This algorithm makes a single synchronization in~\cref{line:sketchW}. There are multiple ways of carrying out the computations retrieving $U_2$ in~\cref{line:backwardsolve}. We choose the explicit recursion using the coefficients $(\alpha_j)_{1\leq j\leq m}$ retrieved from the Householder factorization of $\Psi W$. Experiments show that on some very difficult examples, this algorithm is more stable than Randomized Cholesky QR~\cite{brgs}, see experiments in~\Cref{section:experiments}.

\begin{algorithm}
\caption{RHQR reconstructing randomized Householder vectors (recRHQR) }\label{algo:reconstructrhqr}
\KwInput{Matrix $W \in \mathbb{R}^{n\times m}$, matrix $\Omega \in \mathbb{R}^{\ell \times n}$, $m < l \ll n$}
\KwOutput{$R, U, S, T$ such that $S = \Psi U$, with $\Psi$ defined as in~\cref{eq:psi}}
\SetKwFunction{Rh}{RHVector}
\SetKwFunction{Hqr}{HouseholderQR}
\SetKwFunction{recRHQR}{recRHQR}
\SetKwProg{Fn}{function}{:}{}
\Fn{ \recRHQR{$W$, $\Omega$}}{
Sketch $\Omega W_{m+1:n,1:m}$ \label{line:sketchW} \\
$Z \gets \left[W_{1:m,1:m}; \; \Omega W_{m+1:n, 1:m} \right]$ \quad \# i.e $ \; Z \gets \Psi W$ \\
$S, T, R \gets$ \Hqr{$Z$} \textcolor{black}{\#Householder vectors $S = \Psi U \in \R^{(\ell+m)\times m}$, T factor, R factor} \\
$Y \gets S_{1:m,1:m}$ \\
Backward solve for $X$ in $W_{m+1:n,1:m} = X \cdot \mathrm{ut}(T^t Y^t Z)$ \label{line:backwardsolve} \\
\KwRet $R, \; \; U = \left[ Y; \; X \right], \; \; S, \; \; T$
}
\end{algorithm}

\begin{remark} As for the deterministic Householder QR, provided that $\left[ I_m; \; 0_{(n-m)\times m} \right] - Q$ and $\left[ I_m; \; 0_{\ell \times m} \right] - \Psi Q$ are non-singular, then the RHQR process yields their unique LU factorizations with unit diagonal. Indeed, writing the thin Q factor yielded by RHQR,
\begin{align} \label{eq:lufact} Q = \begin{bmatrix} I_m \\ 0_{(n-m) \times m} \end{bmatrix} - U_m T_m U_1^t \iff \underbrace{\begin{bmatrix}I_m \\ 0_{(n-m) \times m} \end{bmatrix} - Q}_{\text{non-singular}} =  \underbrace{U}_{\substack{\text{lower-triangular,} \\ \text{diagonal of ones}}} \; \cdot \; \underbrace{T U_1^t}_{\text{upper-triangular}}
\end{align}
which is the unique LU factorization of $\left[ I_m; \; 0_{(n-m)\times m} \right] - Q$. Now sketching this equation, and by design of $\Psi$ which preserves the upper block of the sketched matrix,
\begin{align} \label{eq:sketchedlufact} \underbrace{\begin{bmatrix}I_m \\ 0_{\ell \times m} \end{bmatrix} - \Psi Q}_{\text{non-singular}} =  \underbrace{\begin{bmatrix}U_1 \\ \Omega U_2 \end{bmatrix}}_{\substack{\text{lower-triangular,} \\ \text{diagonal of ones}}} \; \cdot \; \underbrace{T U_1^t}_{\text{upper-triangular}}. 
\end{align}
which is the unique LU factorization of $\left[ I_m; \; 0_{\ell \times m} \right] - \Psi Q$. Hence both the randomized Householder vectors and their sketches can be obtained from $Q$ and $\Psi Q$. In turn, if another orthonormalization process of $W$ yields a factor $Q$ such that $\mathrm{Range}(Q) = \mathrm{Range}(W)$, $(\Psi Q)^t \Psi Q = I_m$, the randomized Householder vectors can be obtained from the LU factorization of $\left[ I_m; \; 0_{\ell \times m} \right] - \Psi Q$. The L factor of the latter is $\Psi U$, from which we can deduce $U_1$. The factor $T$ can be deduced from $\Psi U_m$ thanks to~\cref{eq:generalwoodburry}, but can also be deduced from the upper-triangular factor of the LU factorization, which is $U_1^t T$. If $R$ is not given, it can be retrieved as $(\Psi Q)^t \Psi W$. The factor $U_2$ can be retrieved in many ways, for example as in recRHQR. \end{remark}
\vspace{1cm}

\section{Finite precision analysis of RHQR}\label{section:finiteprecision}
In this section, we analyze in finite precision the accuracy of the RHQR factorization by using the worst-case rounding error analysis. The use of this model is made possible by the dimension reduction offered by the sketching process, coupled with the use of accurate sketching techniques. Given an $\epsilon$-embedding $\Omega$ of a finite set of vectors and whose application $x \mapsto \Omega x$ is stable, we show in~\Cref{thm:backwardfactorization} that the sketch of the computed RHQR can be written as $\Psi W + \Delta W = S \widehat{R}$, where $\widehat{R}$ is the computed R factor, $\Delta W$ is a small (columnwise) backward error, and $S$ is an orthogonal matrix. From this we infer, as in the deterministic analysis, that the sketch of the computed $\widehat{Q}$ factor is close to an orthogonal matrix in \Cref{thm:conditionnumberQ}, and that the sketched factorization is accurate in~\Cref{thm:factoerror}, all independently of the condition number of the input $W$. In other words, the computed RHQR factorization inherits the stability properties of the Householder QR factorization of the sketch of the input matrix $W$. Finally, if $\Psi$ is an $\epsilon$-embedding of the range of the high-dimensional computed Q factor, we show in~\Cref{thm:conditionnumberQ} that the latter is well-conditioned, and in~\Cref{thm:factoerror} that the high-dimensional factorization is accurate, all independently of the condition number of the input $W$. This presentation follows very closely that made in~\cite[Chapter 19]{higham}, while exploiting $\epsilon$-embedding properties. We finally show that the subsampled randomized Hadamard transform (SRHT) is a highly accurate sketching process, with a backward error of magnitude $\gamma_{\log_2(n)+5}$ entirely made by deterministic operations $x \mapsto \sqrt{n/\ell} \cdot x$ and $x \mapsto Hx$. While we restrict our attention to SRHT in this paper, the analysis can be extended to other sketching techniques by considering their forward/backward error bound with the appropriate value of $k$ in \eqref{eq:stabilitysketch} and using potentially the probabilistic rounding error analysis as done in \cite{rgs} for Gaussian sketching. 

We outline that, for all sketching distributions that we mentioned in the preliminaries~\Cref{section:preliminaries:randomization}, the requirement on the sampling size in order to sketch a finite set of $p(m)$ vectors, where $p$ is a polynomial of modest degree, is much weaker than that in order to sketch a full $m$-dimensional vector subspace. The former is only a modest multiple of $\log(m)$, while the latter is a modest multiple of $m$. For this reason, we will assume in the following analysis that if $\Omega$ is drawn from some $(\epsilon, \delta, m)$-OSE distribution, then it is also an $\epsilon$-embedding of a modest number of selected vectors, also uncorrelated to $\Omega$, with probability at least $1 - \mathcal{O}(\delta)$. 

Let $\Omega \in \R^{\ell \times n}$ be an $\epsilon$-embedding for some set of vectors $\mathbf{E} \subset \R^n$. To simplify the following derivations, let us also suppose that $\ell$ is even. We shall make use of notations $\gamma$ and $\chi$ introduced in~\eqref{eq:notationgamma} and~\eqref{eq:notationchi}. Let $k \in \mathbb{N}$ such that the following relation is satisfied (with certainty or at least high probability):
\begin{align} \label{eq:stabilitysketch} \forall x \in \mathbf{E}, \quad \widehat{\Omega x} = \Omega x + \Delta z, \quad \|\Delta z \| \leq \gamma_k \cdot \| \Omega x \| \end{align}
We recall the backward error result of the inner product between two vectors $x,y \in \R^n$, given in~\cite[Chapter 3, eq. (3.4)]{higham}: 
$$\mathrm{fl}\Bigl( \langle x, y \rangle \Bigr) = \langle x + \Delta x, y \rangle, \quad |\Delta x| \leq \gamma_n |x| \implies \| \Delta x \| \leq \gamma_n \| x \|$$
Accounting for the sketching error, as well as the error made when computing the inner product of two vectors in $\R^\ell$, we obtain: 
$$\forall x, y \in \mathbf{E}, \quad \fl \Bigl[ \langle \Omega x, \Omega y \rangle \Bigr] = \langle \Omega x + \Delta_1 z, \Omega y + \Delta_2 z + \Delta_3 z \rangle, \quad \begin{dcases} \| \Delta_1 z \| \leq \gamma_k \| \Omega x \| \\ \| \Delta_2 z \| \leq \gamma_k \| \Omega y \| \\ \| \Delta_3 z \| \leq \gamma_\ell \| \Omega y + \Delta_2 z \| \leq \gamma_\ell(1+\gamma_k) \| \Omega y \| \end{dcases}$$
yielding
$$ \forall x\in \mathbf{E}, \quad \fl \left( \| \Omega x \|^2\right) = \| \Omega x \|^2 + \Delta \omega, \quad \| \Delta \omega \| \leq \gamma_{2k+\ell} \| \Omega x \|^2 $$
hence
$$ \fl \left( \| \Omega x \| \right) = \| \Omega x \| + \Delta'\omega, \quad \| \Delta' \omega \| \leq \left( \e + \frac{1}{2} \| \Delta \omega \| + \e \frac{1}{2} \| \Delta \omega \| \right) \leq \gamma_{k + \ell/2 + 1} \| \Omega x \|$$
Furthermore, it is clear that if the computation of $\Psi x$ is made by the straightforward concatenation of the sketch of the last $n-m$ entries of $x$ and the first $m$ entries of $x$, then sketching a vector by using $\Psi$ benefits from the same accuracy as described above. The dependency of the analysis on the large dimension $n$ is captured in the constant $k$. As we will see at the end of the section, when using SRHT sketching, $k$ is at most a modest multiple of $\sqrt{n}$, and might even be only a modest multiple of $\log_2(n)$. 

\begin{lemma} \label{lemma:accuracyrhvector}
    Let $\Psi$ be an $\epsilon$-embedding only for $w \in \R^n$. Let $u \in \R^n$ be the corresponding randomized Householder vector. Let $s = \Psi u$, and $\beta = 2/\|s\|^2$. Let $\widehat{u}, \widehat{s}$, and $\widehat{\beta}$ be their finite precision counterpart computed by~\Cref{algo:rhousevector}.  Suppose that there exists $k \in \mathbb{N}$ such that the forward accuracy of the sketching process verifiesi~\Cref{eq:stabilitysketch}. Suppose that $\gamma_{2k + \ell + 7} \leq \f$. Then the accuracy expected from $\widehat{u}, \widehat{s}, \widehat{\beta}$ is summarized in~\Cref{tab:precisionu}.
    \begin{figure}[htbp]
    \centering
    \renewcommand{\arraystretch}{1.5}
        \centering
        \begin{tabular}{|c||c|c|}
            \hline
            \textbf{Scaling} & $\| s \|^2 = 2$ & $\langle u, e_1 \rangle = 1$ \\
            \hline\hline
            $| \widehat{u} - u | $ & $\leq \gamma_{2k + \ell + 7} |u| $ & $\leq \gamma_{2k + \ell + 5} |u|$ \\
            \hline
            $\|\widehat{s} - s \|$ & $\leq \gamma_{3k + \ell + 7} \| s \|$ & $ \leq \gamma_{3k + \ell + 5} \| s \|$ \\
            \hline
            $|\widehat{\beta} - \beta|$ & $0$ (exact) & $\leq \gamma_{2k + \ell + 4} \beta$ \\
            \hline
        \end{tabular}
        \caption{Unique precision $\e$}
        \label{tab:precisionu}
    \end{figure}

    In the mixed precision settings, only the first line of the table is modified, namely 
    $$| \widehat{u} - u | \leq \chi_2 |u | \lesssim 2 \f |u|$$
    in both scalings. If $s = \Psi u$ is scaled such that $\|s\| = \sqrt{2}$, we get
    $$
    \|\widehat{s}\| = (1 \pm \gamma_{3k + \ell + 7}) \sqrt{2} \quad \text{(both precision settings)}, \quad \begin{dcases} 
        \| \widehat{u} \| \leq \frac{1+\gamma_{2k + \ell + 7}}{1-\epsilon} \sqrt{2} \quad (\text{unique precision } \e) \\
        \| \widehat{u} \| \leq \frac{1+\chi_2}{1-\epsilon} \sqrt{2} \quad (\text{mixed precisions } \e, \f)
    \end{dcases}$$
\end{lemma}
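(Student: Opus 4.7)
The plan is to propagate rounding errors through Algorithm~\ref{algo:rhousevector} step by step, applying the error rules in~\cref{eq:errorrules} and the sketching accuracy hypothesis~\cref{eq:stabilitysketch}. First I would note that at entry, the sketch $y$ satisfies $\widehat{y} = \Psi w + \Delta z$ with $\|\Delta z\| \le \gamma_k \|\Psi w\|$, so that after the trivial truncation producing $\widehat{s} = 0_{j-1}\oslash \widehat{y}_{j:\ell+m}$ we have $\|\widehat{s} - s\| \le \gamma_k \|s\|$ with $s = \Psi (0_{j-1} \oslash w_{j:n})$ (using the block-diagonal design of $\Psi$ in~\cref{eq:psi}). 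Then the computation $\widehat{\rho} = \fl(\|\widehat{s}\|)$ incurs the composed error bound derived in the paragraph preceding the lemma, namely $\widehat{\rho} = \rho(1 + \theta_{k + \ell/2 + 1})$.

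Next I would track the two additions of $\sigma\widehat{\rho}$ to the $j$-th entries of $\widehat{u}$ and $\widehat{s}$. Each is a scalar addition in a location where, thanks to the sign choice of $\sigma$, cancellation is avoided and $|\widehat{\gamma}| \ge \widehat{\rho}$; this forces $\widehat{\gamma} = \gamma \cdot (1 + \theta_{k + \ell/2 + 2})$ (one extra rounding from the sum) and adds only $\gamma_1$ to the componentwise error on $\widehat{s}_j$ and $\widehat{u}_j$, since $\widehat{u}_j$ is the sum of a single original entry plus $\sigma \widehat{\rho}$. The formula $\widehat{\beta} = \fl((\widehat{\rho}\widehat{\gamma})^{-1})$ then combines two multiplications and a reciprocal, giving $\widehat{\beta} = \beta(1 + \theta_{2k + \ell + 4})$, which directly yields the third row of Table~\ref{tab:precisionu} for the second scaling (and zero error for the first, where $\widehat{\beta}$ is simply set to $1$).

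The two scaling branches then diverge and must be treated separately. For the branch $\langle u, e_1\rangle = 1$, the vectors are divided by $\widehat{\gamma}$, a single multiplication per entry, so the componentwise error on $\widehat{u}$ grows by $\gamma_1$ on top of the pre-scaling error, and the norm error on $\widehat{s}$ grows by the same amount; combining with the sketching/$\rho$/addition contributions yields $\gamma_{2k+\ell+5}$ and $\gamma_{3k+\ell+5}$. For the branch $\|s\|^2 = 2$, the scaling factor is $\sqrt{\widehat{\beta}}$, which costs an additional square root and multiplication, adding $\gamma_2$; this produces the constants $\gamma_{2k+\ell+7}$ and $\gamma_{3k+\ell+7}$. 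The mixed-precision refinement then observes that the only high-dimensional operation after $\widehat{s}$ is formed is the final scaling of $\widehat{u}$, which is carried out in precision $\f$ in one (resp.~two) low-precision multiplications; all other contributions to the componentwise bound on $\widehat{u}$ are dominated by $\gamma_{2k+\ell+7} \le \f$ by hypothesis, so only the final $\chi_2$ survives as the effective bound on $|\widehat{u} - u|$.

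The norm bounds are then routine: the relative error bound on $\widehat{s}$ gives $\|\widehat{s}\| = (1 \pm \gamma_{3k+\ell+7})\sqrt{2}$ directly, and for $\widehat{u}$ one uses the $\epsilon$-embedding of $w$ (equivalently of $u$, which lies in a two-dimensional subspace spanned by the already-embedded vector and a canonical vector) to convert the sketched-norm control into a control on $\|u\|$: namely $(1-\epsilon)\|u\| \le \|\Psi u\| = \|s\| = \sqrt{2}$, whence $\|\widehat{u}\| \le (1 + \gamma_{2k+\ell+7})\sqrt{2}/(1-\epsilon)$, and the $\chi_2$ variant in mixed precision by the same argument with the refined componentwise bound. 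The main obstacle I anticipate is the careful bookkeeping of the exact number of rounding operations at each step so that the constants $k + \ell/2 + 1$, $2k + \ell + 4, 5, 7$, and $3k + \ell + 5, 7$ line up exactly; using the rules in~\cref{eq:errorrules} to combine $\theta$ factors of mismatched indices (especially when passing from a relative bound on a sum to a relative bound on a product) is where the accounting is most delicate.
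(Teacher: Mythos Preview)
Your proposal takes essentially the same approach as the paper: both follow the template of \cite[Lemma~19.1]{higham}, propagating rounding errors step by step through Algorithm~\ref{algo:rhousevector} using the rules in~\cref{eq:errorrules} and the sketching bound~\cref{eq:stabilitysketch}, with $k_1 := k + \ell/2 + 1$ as the key intermediate constant. The only noteworthy difference is that the paper, for the $\|s\|^2 = 2$ branch, does not scale by $\sqrt{\widehat{\beta}}$ but instead \emph{divides} entries by $\fl\bigl(\sqrt{\rho(\rho-a)}\bigr)$ (with error $\theta_{k_1+2}$), and handles the first entry separately via the algebraically equivalent form $\sqrt{a/\rho - 1}$ to avoid compounding errors there; the quotient rule in~\cref{eq:errorrules} then produces the factor $\theta_{2k_1+5} = \theta_{2k+\ell+7}$, which is where that specific constant comes from. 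Your phrasing ``grows by $\gamma_1$'' or ``adding $\gamma_2$'' is too additive to be literally correct when dividing by an inexact quantity, but your targets are right and the mechanism you describe is the paper's.
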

\begin{proof}
    We give the detail of the proof, which follows closely that of~\cite[Lemma 19.1]{higham}, in appendix. Note that the bound for $u$ is componentwise while that for $s$ is normwise. The crucial quantity is the computed sketched norm of the input $w$. After the first entry of both $w$ and $\widehat{\Psi w}$ is modified with the computed norm, we carefully scale both vectors by taking advantage of some simple formulas. The output $\widehat{s}$ is not the less accurate sketch of the scaled $\widehat{u}$, but the more accurate direct scaling of $\widehat{\Psi w}$. 
\end{proof}

We now continue with the application of one reflector. For simplicity, we consider in this section that $s = \Psi u$ is scaled such that $\|s\|=\sqrt{2}$. If $\Psi$ was an $\epsilon$-embedding for the vector $w$ from which $u$ is built, then  by design of $\Psi$, it is an $\epsilon$-embedding for the unscaled and finite precision output of $\fl \left(w - \widehat{\rho} e_1 \right)$. 

\begin{lemma}\label{lemma:accuracyapplicationreflector}
    Let $u, \widehat{u}$ and $s, \widehat{s}$ verify the conclusions of the previous lemma. Let $y = P(u, \Psi)\cdot x$, and let us denote $\widehat{y} = y + \Delta y$. Suppose that $\Psi$ is an $\epsilon$-embedding for the set $\{x, y, \widehat{y}, \,\Delta y \}$. Suppose that there exists $k \in \mathbb{N}$ such that the forward accuracy of the sketching process verifies~\Cref{eq:stabilitysketch}. Suppose that $\gamma_{4k+2\ell+7} \leq \f$. Then we obtain: 
    $$\Psi \widehat{y} = \Bigl[P(\Psi u) + \Delta P \Bigr] \cdot x, \quad \| \Delta P \|_2 \leq \begin{dcases} \frac{1+\epsilon}{1-\epsilon} \cdot \gamma_{14k + 7\ell + 37} \quad (\text{unique precision } \e) \\ \\  \frac{1+\epsilon}{1-\epsilon} \cdot \chi_{12} \quad \quad \quad (\text{mixed precision } \f, \e) \end{dcases}$$
\end{lemma}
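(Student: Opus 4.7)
The plan is to trace the finite-precision implementation of $y = P(u,\Psi)\,x = x - \beta\,\langle \Psi u,\Psi x\rangle\,u$ step by step, then sketch the result and invoke point 3 of \Cref{prop:baseproperties}, namely the identity $\Psi\,P(u,\Psi)\,x = P(\Psi u)\,\Psi x$, so that $\Psi\widehat{y}$ can be re-expressed as a small perturbation of $P(\Psi u)\,\Psi x$. The previous lemma already controls $\widehat{u}$, $\widehat{s}$, and $\widehat{\beta}$, so here I only need to track the rounding errors made when applying them to $x$ and then sketching the outcome.

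Concretely, I would decompose the computation of $\widehat{y}$ into four atomic steps and bound each in turn. First, sketch $\widehat{\Psi x} = \Psi x + \Delta_1$ with $\|\Delta_1\|\leq \gamma_k\,\|\Psi x\|$ via \eqref{eq:stabilitysketch}. Second, form the dot product $\widehat{\alpha}=\fl(\widehat{s}^{\,t}\widehat{\Psi x})$; combining the standard backward bound for an $\ell$-dimensional inner product with $\|\widehat{s}-s\|\leq\gamma_{3k+\ell+7}\|s\|$ from \Cref{tab:precisionu} yields $\widehat{\alpha}=\langle s,\Psi x\rangle+\Delta_\alpha$ with $|\Delta_\alpha|$ bounded by a constant times $\gamma\cdot\|s\|\,\|\Psi x\|$. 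Third, multiply by $\widehat{\beta}$: this step is exact in the $\|s\|=\sqrt 2$ scaling and only $(1+\gamma)\beta$ otherwise. Fourth, scale $\widehat{u}$ by the resulting scalar and subtract from $x$ in the ambient precision, using the componentwise bound $|\widehat{u}-u|\leq\gamma_{2k+\ell+7}|u|$. Collecting the four steps yields $\widehat{y}=y+\Delta y$ with $\|\Delta y\|$ of the form $(\text{const}\cdot\gamma)\,\|u\|\,\|\Psi x\|$. In the mixed-precision setting, only the final scaling and subtraction happen in low precision $\f$, while the inner dot product is done in precision $\e$ and absorbed under the hypothesis $\gamma_{4k+2\ell+7}\leq\f$, leaving an overall $\chi$-type bound.

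To finish, I apply $\Psi$ once more, paying a last sketching error of at most $\gamma_k\|\widehat{y}\|$. Since $\Psi y = P(\Psi u)\,\Psi x$ exactly, we get $\Psi\widehat{y}=P(\Psi u)\,\Psi x + E$, where $E$ collects $\Psi\,\Delta y$ and the final sketching error on $\widehat{y}$. Using that $\Psi$ is an $\epsilon$-embedding on $\Delta y$ (granted by hypothesis) gives $\|\Psi\,\Delta y\|\leq(1+\epsilon)\|\Delta y\|$, while the embedding on the vector underlying $u$ converts $\|u\|$ into at most $\|s\|/(1-\epsilon)=\sqrt 2/(1-\epsilon)$, as already noted in the previous lemma. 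Factoring out $\Psi x$ (and invoking the embedding on $x$ to convert $\|\Psi x\|$ into $(1+\epsilon)\|x\|$ where needed) produces a perturbation $\Delta P$ with $\|\Delta P\|_2\leq\tfrac{1+\epsilon}{1-\epsilon}\cdot(\text{accumulated }\gamma)$, of the announced magnitude.

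The main obstacle is the bookkeeping, not a new mathematical idea. Each passage between $\R^n$ and $\R^{\ell+m}$ costs a factor $(1+\epsilon)$ or $1/(1-\epsilon)$, and one must be careful to invoke the $\epsilon$-embedding only for the vectors granted by the hypothesis $\{x,y,\widehat{y},\Delta y\}$ together with those inherited from \Cref{lemma:accuracyrhvector}. The final constants $14k+7\ell+37$ in the unique-precision case and $\chi_{12}$ in the mixed-precision case emerge from summing the contributions of the two sketches (on $x$ and on $\widehat{y}$), the sketch implicit in $\widehat{s}$, the $\ell$-dimensional dot product, the error in $\widehat{\beta}$, and the componentwise error in $\widehat{u}$, then merging cross-terms via the accumulation rules \eqref{eq:errorrules}.
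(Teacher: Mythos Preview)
Your decomposition into atomic steps (sketch $x$, form the inner product with $\widehat{s}$, scale $\widehat{u}$, subtract from $x$) and your use of the identity $\Psi\,P(u,\Psi)=P(\Psi u)\,\Psi$ are exactly what the paper does, and the bookkeeping you sketch is the right one.

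There is one misconception worth correcting: when you write ``apply $\Psi$ once more, paying a last sketching error of at most $\gamma_k\|\widehat{y}\|$,'' you are treating $\Psi\widehat{y}$ as a \emph{computed} quantity. In this lemma $\Psi\widehat{y}$ is the \emph{exact} sketch of the computed vector $\widehat{y}$, used purely for analysis; no floating-point sketch of $\widehat{y}$ is taken here, so no extra $\gamma_k$ term enters. The only sketching error counted in this lemma is the one incurred when forming $\widehat{\Psi x}$ inside the dot product $\widehat{\alpha}$. Consequently your accounting ``two sketches (on $x$ and on $\widehat{y}$)'' overcounts by one and would not reproduce the constant $14k+7\ell+37$; the paper reaches it from a single sketch of $x$, the error in $\widehat{s}$, the $\ell$-dimensional inner product, the scaling by $\widehat{u}$, the subtraction, and finally the $\epsilon$-embedding applied exactly to $\Delta y$.

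A second point you leave vague is how $\Delta P$ is actually produced. The paper does not ``factor out $\Psi x$'' loosely; it defines the explicit rank-one matrix $\Delta P := \|\Psi x\|^{-2}\,(\Psi\,\Delta y)(\Psi x)^t$, which immediately gives $\Delta P\cdot\Psi x = \Psi\,\Delta y$ and $\|\Delta P\|_2 = \|\Psi\,\Delta y\|/\|\Psi x\|$. This is the step that converts a vector bound on $\Psi\,\Delta y$ into an operator-norm bound on $\Delta P$, and it is why the $\epsilon$-embedding on $\Delta y$ (not on $x$) is the hypothesis you actually need at the end.
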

\begin{proof}
    We give the detail of the proof, which follows closely that of~\cite[Lemma 19.2]{higham} in the appendix. We first bound $\widehat{\alpha} = \fl \Bigl( \langle \fl(\Psi x), \widehat{s} \rangle \Bigr)$ in terms of $\| \Psi x \|$. We then do the same for $\widehat{\alpha} \cdot \widehat{u}$, depending on the chosen precision setting. After completing the bounds of $\widehat{y}$ in terms of the norm of $\Psi x$, we sketch the whole equation and benefit from $\Psi P(u, \Psi) = P(\Psi u) \Psi$ where $P(\Psi u)$ is a deterministic Householder reflector, thus with spectral norm $1$. Note that, as in the deterministic case, $\Delta y$ can be bounded component-wise by $y$, however we do not use this property. 
\end{proof}
We now analyze the composition of several randomized Householder reflectors.

\begin{lemma} \label{lemma:sequencereflectors}
    Let $V \in \R^{n \times m}$ be an input matrix. Let $(u_j, s_j, \widehat{u}_j, \widehat{s}_j)_{j \in \{1 \hdots m\}}$ verify the conclusions of the previous lemmas. Let us denote the sequence
    $$\begin{dcases}
        V^{(0)} := V, \\
        V^{(j+1)} := P(u_{j+1}, \Psi) \cdot V^{(j)} \quad j \in \{2, \hdots m\}.
    \end{dcases}$$
    Assume that $\Psi$ is an $\epsilon$-embedding of the finite set of $4m^2$ vectors containing the columns of $\widehat{V}^{(j-1)}$, $P(\widehat{u}_j, \Psi) \cdot \widehat{V}^{(j-1)}$, $\widehat{V}^{(j)}$ and $\widehat{V}^{(j)} - P(\widehat{u}_j, \Psi) \cdot \widehat{V}^{(j-1)}$ for $j \in \{1 \cdots m\}$. Then the final output $\widehat{V}^{(m)}$ verifies
    $$\Psi \widehat{V}^{(m)} = P(\Psi u_m) \cdots P(\Psi u_1) \cdot (\Psi V + \Delta V),$$
    where the backward error matrix $\Delta V \in \R^{\ell \times m}$, formed by vectors $\Delta v_1 \hdots \Delta v_m$, verifies
    $$\forall j \in \{1 \hdots m\}, \quad \| \Delta v_j \| \lesssim  \begin{dcases} \frac{1+\epsilon}{1-\epsilon} \cdot \gamma_{14k + 7\ell + 37} \cdot m \cdot \| \Psi v_j \| \quad (\text{unique precision } \e) \\ \\ \frac{1+\epsilon}{1-\epsilon} \cdot \chi_{12} \cdot m \cdot  \| \Psi v_j \| \quad (\text{mixed precision } \f, \e) \end{dcases} $$
\end{lemma}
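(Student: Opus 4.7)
The plan is to iterate Lemma~\ref{lemma:accuracyapplicationreflector} over the $m$ reflector applications and telescope the resulting sketched backward errors. For step $j$ and column $k$, that lemma requires $\Psi$ to be an $\epsilon$-embedding of input, exact output, computed output, and backward error — four vectors per $(j,k)$, totalling $4m^2$ vectors, which is exactly the embedding hypothesis. It returns a per-column error matrix with spectral norm bounded uniformly by
$$
\eta := \tfrac{1+\epsilon}{1-\epsilon}\, \chi_{12} \quad (\text{mixed precision}), \qquad \eta := \tfrac{1+\epsilon}{1-\epsilon}\, \gamma_{14k+7\ell+37} \quad (\text{unique precision}).
$$
Since this bound is independent of $k$, I gather the $m$ columnwise errors of step $j$ into a single $\Delta P_j$ with $\|\Delta P_j\|_2 \le \eta$, so that
$$
\Psi \widehat{V}^{(j)} = \bigl( P(\Psi u_j) + \Delta P_j \bigr)\, \Psi \widehat{V}^{(j-1)}, \qquad j = 1, \dots, m.
$$

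Composing these identities from $j=1$ to $j=m$ gives
$$
\Psi \widehat{V}^{(m)} = \bigl(P(\Psi u_m) + \Delta P_m\bigr) \cdots \bigl(P(\Psi u_1) + \Delta P_1\bigr) \, \Psi V.
$$
Each exact factor $P(\Psi u_j)$ is the deterministic Householder reflector of $\R^{\ell+m}$ associated to $\Psi u_j$, hence orthogonal with $\|P(\Psi u_j)\|_2 = 1$. Expanding the product and bounding every monomial that contains at least one $\Delta P_j$ by a product of norm-$1$ and norm-$\eta$ factors yields
$$
\bigl(P(\Psi u_m) + \Delta P_m\bigr) \cdots \bigl(P(\Psi u_1) + \Delta P_1\bigr) = P(\Psi u_m) \cdots P(\Psi u_1) + E, \quad \|E\|_2 \le (1+\eta)^m - 1 \lesssim m\,\eta,
$$
where the last bound uses $m\eta \ll 1$, which is guaranteed by the assumption on $\e$ and $\f$.

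Finally, since $P(\Psi u_m) \cdots P(\Psi u_1)$ is an isometry of $\R^{\ell+m}$, I set
$$
\Delta V := P(\Psi u_1) \cdots P(\Psi u_m) \, E \, \Psi V,
$$
obtaining $\Psi \widehat{V}^{(m)} = P(\Psi u_m) \cdots P(\Psi u_1)\,(\Psi V + \Delta V)$. Taking the $j$-th column and using again that $P(\Psi u_1) \cdots P(\Psi u_m)$ preserves Euclidean norm, I get the columnwise estimate $\|\Delta v_j\| = \|E\, \Psi v_j\| \le \|E\|_2 \, \|\Psi v_j\| \lesssim m\,\eta\,\|\Psi v_j\|$, which is exactly the claimed bound in both precision settings. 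The main bookkeeping obstacle is not the arithmetic but the consistent use of the $\epsilon$-embedding hypothesis: at every step one must check that the vectors entering and leaving the reflector, as well as their difference, lie in the prescribed finite set on which $\Psi$ is an embedding; this is precisely why the statement lists the four families of $m$ columns at each step $j$. A secondary point is that the per-column $\Delta P_{j,k}$ depend on $k$, but the uniform spectral bound allows them to be consolidated into a single $\Delta P_j$ without enlarging the error.
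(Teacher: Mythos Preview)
Your proof is correct and follows essentially the same route as the paper: iterate Lemma~\ref{lemma:accuracyapplicationreflector} to express $\Psi\widehat{V}^{(m)}$ as a product of perturbed orthogonal reflectors acting on $\Psi V$, then bound the accumulated perturbation by $(1+\eta)^m-1\lesssim m\eta$ (the paper invokes \cite[Lemma~3.7]{higham} for exactly this step) and read off the columnwise backward error. One small caveat: your claim that the column-dependent $\Delta P_{j,k}$ can be ``consolidated into a single $\Delta P_j$ without enlarging the error'' is not true in general for arbitrary inputs, but it is harmless here since the argument is ultimately applied column by column with a uniform bound---the paper commits the same notational shortcut.
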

\begin{proof}
    Using iteratively the previous lemma, considering unique precision $\e$, we obtain
    $$\Psi \widehat{V}^{(m)} = \Bigl(P(\Psi u_m) + \Delta P^{(m)}\Bigr) \cdots \Bigl(P(\Psi u_1) + \Delta P^{(1)}\Bigr) \cdot \Psi V, \quad \| \Delta P^{(j)} \|_2 \leq  \begin{dcases} \frac{1+\epsilon}{1-\epsilon} \cdot \gamma_{14k + 7 \ell + 37} \quad (\text{precision } \e) \\ \\ \frac{1+\epsilon}{1-\epsilon} \cdot \chi_{12} \quad (\text{mixed precision } \f, \e) \end{dcases}$$
    since the spectral norm is bounded by the Frobenius norm. Note that since we sketched the factorization, we could commute the sketching matrix and the randomized reflectors, and the right-hand side is a composition of perturbed strict isometries of $\R^\ell$, $P(\Psi u_1), \hdots P(\Psi u_m)$. Let us denote
    \begin{align}\label{eq:gamma} \gamma = \begin{dcases}
        \frac{1+\epsilon}{1-\epsilon} \cdot \gamma_{14k + 7\ell + 37} \quad (\text{unique precision } \e) \\ \\
        \frac{1+\epsilon}{1-\epsilon} \cdot \chi_{12} \quad (\text{mixed precisions } \e, \f)
    \end{dcases} \end{align}
    The result follows by applying~\cite[Lemma 3.7]{higham},
    \begin{align} \label{eq:gammaprime} \|\Delta v_j \| \leq \Bigl( (1 + \gamma)^m -1 \Bigr) \| \Psi v_j \| \leq \frac{m \gamma}{1-m\gamma} \|\Psi v_j \| =: m \gamma' \| \Psi v_j \|.\end{align}
\end{proof}
Let us now suppose that $\Omega$ used in $\Psi$ is drawn from ($\epsilon, \delta, m$)-OSE and, as in~\cite[Chapter 19]{higham}, apply the previous lemma to inputs $W$ and $\left[I_m; \; 0_{(n-m)\times m} \right]$, describing respectively the computed $\widehat{R}$ and the computed $\widehat{Q}$. Let us use the notations $\gamma$ and $\gamma'$ from~\eqref{eq:gamma} and~\eqref{eq:gammaprime}. In the implementation proposed in this work, the lower-triangular part is explicitly zeroed-out to yield the upper-triangular $\widehat{R} \in \R^{m \times m}$. However, if we would naively apply the reflectors to the whole matrix $W$, we would instead get a matrix $\widebar{R} \in \R^{n \times m}$, whose upper-triangular part is $\widehat{R}$, and the lower-triangular part would contain round-off errors.
$$\Psi \widebar{R} = P(\Psi u_m) \cdot P(\Psi u_1) \cdot (\Psi W + \Delta V), \quad  \| \Delta v_j \| \leq m \gamma' \| \Psi \widebar{w}_j \|,$$
However, as pointed out in~\cite[Chapter 19]{higham} in the deterministic Householder QR, since the entries at index $j+1 \hdots n$ of $\Delta y$ defined in~\eqref{eq:deltay} are explicitly zeroed out during step $j$ of the algorithm, then by design of $\Psi$ the corresponding $\Delta P^{(j)}$ also has its $j+1, \hdots n$ rows set to zero. In this case, the bound on $\Delta P$ is still valid, and we obtain
$$\Psi \begin{bmatrix} \widehat{R} \\ 0_{(n-m) \times m} \end{bmatrix} = P(\Psi u_m) \cdot P(\Psi u_1) \cdot (\Psi W + \Delta V), \quad  \| \Delta v_j \| \leq m \gamma' \| \Psi w_j \|.$$
In the following results, we will simply denote
$$\gamma' = \Bigl(1+\mathcal{O}(\epsilon)\Bigr)\gamma.$$
Now, by unstacking the reflectors on the right and re-stacking them in reverse order on the left,
$$P(\Psi u_1) \cdots P(\Psi u_m) \cdot \Psi \begin{bmatrix} \widehat{R} \\ 0_{(n-m) \times m} \end{bmatrix} = \Psi W + \Delta V.$$
which by design of $\Psi$ simply yields the following claim.
\begin{theorem} \label{thm:backwardfactorization}
    Let $\Omega$ used in $\Psi$ come from $(\epsilon, \delta, m)$-OSE. Neglect possible minor correlations between $\Omega$ and the round-off errors. Suppose that there exists $k \in \mathbb{N}$ such that the forward accuracy of the sketching process verifies~\Cref{eq:stabilitysketch}. Suppose that $\gamma_{4k+2\ell+7} \leq \f$. Let $\widehat{R}$ be the computed $R$ factor in the right-looking RHQR process on matrix $W$.  Then with probability at least $1-\delta$, there exists an orthonormal matrix $S \in \R^{(\ell+m) \times (\ell+m)}$ and a matrix $\Delta V \in \R^{n \times m}$ such that
    $$S \begin{bmatrix} \widehat{R} \\ 0_{\ell \times m} \end{bmatrix} = \Psi W + \Delta V, \quad \| \Delta v_j \| \lesssim \begin{dcases}  \Bigl(1+\mathcal{O}(\epsilon)\Bigr) \cdot m \cdot \gamma_{14k + 7\ell + 37} \cdot \| \Psi w_j \| \quad (\text{unique precision } \e ) \\ \\ \Bigl(1+\mathcal{O}(\epsilon)\Bigr) \cdot m \cdot \chi_{12} \cdot \| \Psi w_j \| \quad \quad \quad \; (\text{mixed precision } \e, \f)\end{dcases}$$
    with notations $\gamma_p \lesssim p \e$ and $\chi_p \lesssim p \f$ defined in~\eqref{eq:notationgamma} and~\eqref{eq:notationchi}, higher precision $\e$ and lower precision $\f$. 
\end{theorem}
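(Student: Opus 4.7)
The plan is to apply Lemma \ref{lemma:sequencereflectors} once to $V = W$, then package the result using the involution property of deterministic Householder reflectors together with the block structure of $\Psi$. The preceding lemmas have already done the hard work; this final step is mostly bookkeeping.

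First, I would invoke Lemma \ref{lemma:sequencereflectors} with $V = W$. Its hypothesis requires $\Psi$ to be an $\epsilon$-embedding of a finite set of at most $4m^2$ specific vectors (columns of the iterates, their reflected images, and the corresponding differences). Since $\Omega$ is drawn from an $(\epsilon, \delta, m)$-OSE, I would appeal to the remark at the start of the section: such an $\Omega$ is also an $\epsilon$-embedding of any fixed set of $\mathrm{poly}(m)$ vectors uncorrelated with $\Omega$ with probability at least $1 - \mathcal{O}(\delta)$. This is where the ``with probability at least $1-\delta$'' qualifier enters. The neglected minor correlations between $\Omega$ and the round-off errors are handled by the standing probabilistic convention declared in the hypotheses.

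Second, I would justify that the explicit zeroing of the lower-triangular entries during the right-looking algorithm (producing $\widehat{R} \in \R^{m \times m}$ rather than a noisy full-height matrix) does not weaken the error bound. The key observation, as highlighted in the paragraph preceding the theorem, is that by the block design of $\Psi$ (identity on the top $m$ rows, $\Omega$ on the bottom $n-m$), the rows $j+1, \ldots, n$ of every per-step perturbation $\Delta P^{(j)}$ from Lemma \ref{lemma:accuracyapplicationreflector} can be taken to be zero, so explicit zeroing is consistent with the backward perturbation framework. Lemma \ref{lemma:sequencereflectors} then yields, with probability at least $1 - \mathcal{O}(\delta)$,
\begin{align*}
\Psi \begin{bmatrix} \widehat{R} \\ 0_{(n-m) \times m} \end{bmatrix} = P(\Psi u_m) \cdots P(\Psi u_1) \cdot \bigl( \Psi W + \Delta V \bigr),
\end{align*}
with $\|\Delta v_j\|$ bounded as in the statement (the $m \gamma'$ of Lemma \ref{lemma:sequencereflectors} absorbing into the $(1+\mathcal{O}(\epsilon))$ prefactor).

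Third, I would exploit two clean algebraic facts. Each $P(\Psi u_j)$ is a deterministic Householder reflector on $\R^{\ell + m}$, hence an involution, so multiplying on the left by $P(\Psi u_1) \cdots P(\Psi u_m)$ gives
\begin{align*}
P(\Psi u_1) \cdots P(\Psi u_m) \cdot \Psi \begin{bmatrix} \widehat{R} \\ 0_{(n-m) \times m} \end{bmatrix} = \Psi W + \Delta V.
\end{align*}
By the block structure of $\Psi$, the left-hand inner factor collapses to $\begin{bmatrix} \widehat{R} \\ 0_{\ell \times m} \end{bmatrix}$. Setting $S := P(\Psi u_1) \cdots P(\Psi u_m) \in \R^{(\ell+m) \times (\ell+m)}$ produces an orthogonal matrix as a product of deterministic reflectors, and the identity $S \cdot \bigl[ \widehat{R};\; 0_{\ell \times m} \bigr] = \Psi W + \Delta V$ is exactly the claim.

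The main obstacle is not technical but notational: carefully distinguishing what lives in $\R^n$ versus $\R^{\ell+m}$, and verifying that the sparsity pattern of $\Psi$ lets the explicit zeroing of $\widehat{R}$'s lower block pass through the per-step perturbations $\Delta P^{(j)}$ without cost. All quantitative work has already been discharged by Lemmas \ref{lemma:accuracyrhvector}, \ref{lemma:accuracyapplicationreflector}, and \ref{lemma:sequencereflectors}; what is left here is to recognize the orthogonal factor $S$ and repackage the accumulated error into a single columnwise backward bound.
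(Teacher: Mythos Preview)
Your proposal is correct and follows essentially the same route as the paper: apply Lemma~\ref{lemma:sequencereflectors} with $V=W$, argue (via the block structure of $\Psi$) that the explicit zeroing of the sub-diagonal entries leaves the per-step perturbation bounds intact, then invert the product of deterministic reflectors and set $S = P(\Psi u_1)\cdots P(\Psi u_m)$. The paper's proof is exactly this bookkeeping, presented in the paragraphs immediately preceding the theorem.
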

As in the deterministic case, we note that in practical implementations the $j$-th column of the factorization undergoes only the first $j$ reflectors, hence the constant $m \gamma'$ can be lowered to $j \gamma'$. We insist that the above result requires only a finite set of vectors to be well-sketched, and speaks of the accuracy of the simultaneous implicit Householder QR factorization induced by the RHQR process. As in~\cite[Chapter 19]{higham}, the matrix $S$ is never explicitely formed and is of pure theoretical interest, allowing to derive the incoming stability results.

We discuss now the condition number of the computed thin Q factor. As in the deterministic case, we can apply the previous analysis to the input $\left[ I_m ; \; 0_{(n-m)\times m} \right]$. From~\Cref{lemma:sequencereflectors} we get
$$\Psi \cdot \widehat{Q} = P(\Psi u_1) \cdots P(\Psi u_m) \cdot \Psi \cdot \Bigl( \begin{bmatrix} I_m \\ 0_{(n-m)\times m} \end{bmatrix} + \Delta V \Bigr) = S + \Delta V', \quad \Delta V' = S \cdot \Delta V, \quad \| \Delta v_j \| \leq m \gamma' \cdot 1$$
(recall that the columns of $S$ are unit vectors). As the inequality holds column-wise, we get, for the whole matrix,
$$\|\Psi \widehat{Q} - S \|_2 \leq \| \Psi \widehat{Q} - S \|_F \leq m^{3/2} \gamma'.$$
Using Weyl's inequality for singular values, we finally get the following result.
\begin{theorem} \label{thm:conditionnumberQ}
    Let $\Omega$ used in $\Psi$ come from $(\epsilon, \delta, m)$-OSE. Neglect possible minor correlations between $\Omega$ and the round-off errors. Suppose that there exists $k \in \mathbb{N}$ such that the forward accuracy of the sketching process verifies~\Cref{eq:stabilitysketch}. Suppose that $\gamma_{4k+2\ell+7} \leq \f$ Let $\widehat{Q}$ denote the computed thin Q factor by applying the randomized Householder reflectors to $\left[I_m ; \; 0_{(n-m) \times m} \right]$. Then with probability at least $1- \mathcal{O}(\delta)$ we get
    $$\mathrm{Cond}(\Psi \widehat{Q}) \lesssim \begin{dcases} \frac{1+m^{3/2} \cdot \Bigl(1+\mathcal{O}(\epsilon)\Bigr) \cdot \gamma_{14k + 7\ell + 37}}{1-m^{3/2} \cdot \Bigl( 1 + \mathcal{O}(\epsilon) \Bigr)\cdot \gamma_{14k + 7\ell + 37}} \quad (\text{unique precision } \e) \\ \\ \frac{1+m^{3/2} \cdot \Bigl(1+\mathcal{O}(\epsilon)\Bigr) \cdot \chi_{12}}{1-m^{3/2} \cdot \Bigl(1+\mathcal{O}(\epsilon)\Bigr)\cdot \chi_{12}}  \quad \quad \quad (\text{mixed precision } \e, \f) \end{dcases}$$
    $$\mathrm{Cond}(\widehat{Q}) \leq \begin{dcases} \frac{1+\epsilon}{1-\epsilon} \cdot \frac{1+m^{3/2} \cdot \Bigl(1+\mathcal{O}(\epsilon)\Bigr) \cdot \gamma_{14k + 7\ell + 37}}{1-m^{3/2} \cdot \Bigl( 1 + \mathcal{O}(\epsilon) \Bigr)\cdot \gamma_{14k + 7\ell + 37}} \quad (\text{unique precision } \e) \\ \\ \frac{1+\epsilon}{1-\epsilon} \cdot \frac{1+m^{3/2} \cdot \Bigl(1+\mathcal{O}(\epsilon)\Bigr) \cdot \chi_{12}}{1-m^{3/2} \cdot \Bigl(1+\mathcal{O}(\epsilon)\Bigr)\cdot \chi_{12}} \quad \quad \quad (\text{mixed precision } \e, \f) \end{dcases}$$
    with notations $\gamma_p \lesssim p \e$ and $\chi_p \lesssim p \f$ defined in~\eqref{eq:notationgamma} and~\eqref{eq:notationchi}, higher precision $\e$ and lower precision $\f$. 
    Both these bounds are independent of the initial condition number of the input $W$.
\end{theorem}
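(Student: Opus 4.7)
The strategy is to run the backward-error machinery of \Cref{lemma:sequencereflectors} on the input matrix $\left[I_m;\; 0_{(n-m)\times m}\right]$, since by definition $\widehat{Q}$ is the matrix obtained when the computed reflectors $P(\widehat{u}_1,\Psi),\dots,P(\widehat{u}_m,\Psi)$ are applied to this identity block. First I would verify the hypotheses probabilistically: the lemma requires $\Psi$ to be an $\epsilon$-embedding of the finite set (at most $4m^2$) of intermediate vectors arising in the recursion for the new input, while the second inequality of the theorem will additionally require $\Psi$ to be an $\epsilon$-embedding of the $m$-dimensional subspace $\mathrm{Range}(\widehat{Q})$. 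Both events are controlled by the $(\epsilon,\delta,m)$-OSE property of $\Omega$ under the decorrelation assumption of the theorem (finite-set embedding being strictly weaker, as noted at the top of \Cref{section:finiteprecision}), so a union bound gives both with probability $1-\mathcal{O}(\delta)$.

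Conditioning on this event, \Cref{lemma:sequencereflectors} applied to $V=\left[I_m;\; 0_{(n-m)\times m}\right]$ yields, using $\Psi V=\left[I_m;\; 0_{\ell\times m}\right]$ (so $\|\Psi v_j\|=1$ for each column),
\begin{equation*}
\Psi\widehat{Q} \;=\; P(\Psi u_m)\cdots P(\Psi u_1)\cdot \bigl(\Psi V+\Delta V\bigr) \;=\; S + \Delta V',
\end{equation*}
where $S:=P(\Psi u_m)\cdots P(\Psi u_1)\cdot \Psi V$ is a submatrix of a product of deterministic Householder reflectors of $\R^{\ell+m}$ and therefore has orthonormal columns, while $\Delta V' := P(\Psi u_m)\cdots P(\Psi u_1)\cdot \Delta V$ has column norms $\|\Delta v'_j\|=\|\Delta v_j\|\leq m\gamma'$ by orthogonality of each $P(\Psi u_i)$ and the column-wise bound in \Cref{lemma:sequencereflectors}, with $\gamma'$ as in~\eqref{eq:gammaprime}. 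Bounding the spectral norm by the Frobenius norm and summing $m$ columns gives $\|\Psi\widehat{Q}-S\|_2\leq \|\Psi\widehat{Q}-S\|_F\leq m^{3/2}\gamma'$.

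Next I would invoke Weyl's inequality for singular values of rectangular matrices: $|\sigma_i(\Psi\widehat{Q})-\sigma_i(S)|\leq \|\Psi\widehat{Q}-S\|_2$ for every $i\in\{1,\dots,m\}$. Since $S$ has orthonormal columns, $\sigma_i(S)=1$ for all $i$, hence $\sigma_{\max}(\Psi\widehat{Q})\leq 1+m^{3/2}\gamma'$ and $\sigma_{\min}(\Psi\widehat{Q})\geq 1-m^{3/2}\gamma'$. The hypothesis $\gamma_{4k+2\ell+7}\leq \f$ combined with the explicit form of $\gamma'$ guarantees $m^{3/2}\gamma'<1$, so the denominator is positive and we may take the ratio to obtain the first bound on $\mathrm{Cond}(\Psi\widehat{Q})$.

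Finally, to transfer the bound to the high-dimensional $\widehat{Q}$, I would invoke the preliminary inequality $\mathrm{Cond}(\widehat{Q})\leq \tfrac{1+\epsilon}{1-\epsilon}\mathrm{Cond}(\Psi\widehat{Q})$ from \Cref{section:preliminaries:randomization}, which requires precisely the second embedding event above. Multiplying through by $\tfrac{1+\epsilon}{1-\epsilon}$ and separating the mixed-precision and unique-precision forms of $\gamma'$ from~\eqref{eq:gamma} yields the two displayed bounds of the theorem. The main obstacle is the probabilistic bookkeeping: $\mathrm{Range}(\widehat{Q})$ is not independent of $\Omega$ (it is computed from it through the RHQR process), so the simultaneous embedding of the finite intermediate set and of this data-dependent subspace rests on the mild decorrelation assumption between $\Omega$ and the round-off errors that is stated in the theorem's hypotheses; the remaining computations, being applications of Weyl's inequality and the $\epsilon$-embedding inequality, are entirely deterministic once conditioning has been performed.
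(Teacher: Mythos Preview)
Your proposal is correct and follows essentially the same route as the paper: apply \Cref{lemma:sequencereflectors} with input $V=\left[I_m;\,0_{(n-m)\times m}\right]$, obtain $\Psi\widehat{Q}=S+\Delta V'$ with $S$ having orthonormal columns and $\|\Delta V'\|_F\le m^{3/2}\gamma'$, then invoke Weyl's inequality and finally the $\epsilon$-embedding bound $\mathrm{Cond}(\widehat{Q})\le\tfrac{1+\epsilon}{1-\epsilon}\mathrm{Cond}(\Psi\widehat{Q})$. Two cosmetic remarks: (i) to form $\widehat{Q}$ the reflectors are applied in the order $P(\widehat{u}_m,\Psi)$ first through $P(\widehat{u}_1,\Psi)$ last, so after sketching the product reads $P(\Psi u_1)\cdots P(\Psi u_m)$ rather than the order you wrote --- this is immaterial since \Cref{lemma:sequencereflectors} goes through verbatim for any ordering of the vectors; (ii) the hypothesis $\gamma_{4k+2\ell+7}\le\f$ is what makes the mixed-precision bound in \Cref{lemma:accuracyapplicationreflector} valid, not what forces $m^{3/2}\gamma'<1$; the latter is the usual implicit smallness assumption behind the $\lesssim$ notation. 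Your explicit handling of the probabilistic bookkeeping (distinguishing the finite-set embedding needed for the lemma from the subspace embedding of $\mathrm{Range}(\widehat{Q})$ needed for the second bound) is actually more careful than the paper's discussion.
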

We finally consider the accuracy of the factorization. As in the deterministic case, it comes from the factorization being written in its backward stable form in~\Cref{thm:backwardfactorization}. Let us denote $\widehat{W} = \widehat{Q} \cdot \widehat{R}$, $\widetilde{W} = S \left[ \widehat{R}; \; 0_{\ell \times m} \right]$, and $\Delta Q = \Psi \widehat{Q} - S \left[I_m ; \; 0_{(n-m) \times m} \right]$. We get
\begin{align*} \| \Psi w_j - \Psi \widehat{w}_j \| & = \| (\Psi w_j - \Psi \widetilde{w}_j) + (\Psi \widetilde{w}_j - \Psi \widehat{w}_j \| \\
& \leq \| \Delta v_j \| + \| \Delta Q \cdot \widehat{r}_j \| \\
& \leq m \gamma' \| \Psi w_j \| + \| \Delta Q \|_F \cdot \|\widehat{r}_j \| \\
& \leq m \gamma' \| \Psi w_j \| + m^{3/2} \gamma' \|\widehat{r}_j\| \\
& \lesssim (m^{3/2} + m) \gamma' \| \Psi w_j \|
\end{align*}
which proves the final following result.
\begin{theorem}\label{thm:factoerror}
    Let $\Omega$ used in $\Psi$ come from $(\epsilon, \delta, m)$-OSE. Neglect possible minor correlations between $\Omega$ and the round-off errors. Suppose that there exists $k \in \mathbb{N}$ such that the forward accuracy of the sketching process verifies~\Cref{eq:stabilitysketch}. Let $\widehat{Q}, \widehat{R}$ denote the computed Q and R factors of the right-looking RHQR decomposition. Let $\widehat{W}$ denote $\widehat{Q} \widehat{R}$. Then with probability at least $1 - \mathcal{O}(\delta)$ we get
    $$\| \Psi (W - \Psi \widehat{Q} \widehat{R})(:,j) \| \leq \begin{dcases} \Bigl(1+\mathcal{O}(\epsilon)\Bigr) \cdot m^{3/2} \gamma_{14k + 7\ell + 37} \| \Psi W(:,j) \| \quad (\text{unique precision } \e) \\ \Bigl(1+\mathcal{O}(\epsilon)\Bigr) \cdot m^{3/2} \chi_{12} \| \Psi W(:,j) \| \quad \quad \quad (\text{mixed precision } \e, \f)\end{dcases} $$
    $$ \|(W - \widehat{Q} \widehat{R})(:,j) \| \leq \begin{dcases} \Bigl(1+\mathcal{O}(\epsilon) \Bigr) \cdot m^{3/2} \cdot \gamma_{14k + 7\ell + 37} \cdot\|W(:,j) \| \quad (\text{unique precision } \e)  \\  \\ \Bigl(1+\mathcal{O}(\epsilon)\Bigr) \cdot m^{3/2} \cdot \chi_{12} \cdot\|W(:,j) \| \quad \quad \quad (\text{mixed precision } \e, \f) \end{dcases}$$
    with notations $\gamma_p \lesssim p \e$ and $\chi_p \lesssim p \f$ defined in~\eqref{eq:notationgamma} and~\eqref{eq:notationchi}, higher precision $\e$ and lower precision $\f$. 
    Both these bounds are independent of the initial condition number of the input $W$.
\end{theorem}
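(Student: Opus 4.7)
The plan is to combine the backward stability statement of Theorem~\ref{thm:backwardfactorization} with the near-orthogonality statement of (the proof of) Theorem~\ref{thm:conditionnumberQ}, exactly following the pattern suggested in the paragraph just before the theorem, but presented as a self-contained argument. Write $\gamma' := (1+\mathcal{O}(\epsilon))\gamma$ with $\gamma$ as in~\eqref{eq:gamma}, so that both previous theorems share a common constant.

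First I would record two identities. From Theorem~\ref{thm:backwardfactorization}, there is an orthonormal $S \in \R^{(\ell+m)\times(\ell+m)}$ and a perturbation $\Delta V$ with $\|\Delta v_j\| \lesssim m\gamma' \|\Psi w_j\|$ such that
\begin{equation*}
S \begin{bmatrix} \widehat{R} \\ 0_{\ell \times m} \end{bmatrix} = \Psi W + \Delta V.
\end{equation*}
From the computation performed in the proof of Theorem~\ref{thm:conditionnumberQ} applied to the input $[I_m;\, 0_{(n-m)\times m}]$, we have
\begin{equation*}
\Psi \widehat{Q} = S \begin{bmatrix} I_m \\ 0_{\ell \times m} \end{bmatrix} + \Delta Q, \qquad \|\Delta Q\|_F \leq m^{3/2} \gamma'.
\end{equation*}
Multiplying this second identity on the right by $\widehat{R}$ and subtracting from the first yields
\begin{equation*}
\Psi W - \Psi\widehat{Q}\widehat{R} = -\Delta V - \Delta Q\,\widehat{R}.
\end{equation*}

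Next I would take the $j$-th column. By the triangle inequality,
\begin{equation*}
\bigl\|\Psi(W - \widehat{Q}\widehat{R})(:,j)\bigr\| \leq \|\Delta v_j\| + \|\Delta Q\|_F \cdot \|\widehat{r}_j\|.
\end{equation*}
To control $\|\widehat{r}_j\|$, I use that $S$ is orthonormal, so $\|\widehat{r}_j\| = \|S[\widehat{r}_j;\,0]\| = \|(\Psi W + \Delta V)(:,j)\| \leq (1 + m\gamma')\|\Psi w_j\|$. Inserting the two bounds gives
\begin{equation*}
\bigl\|\Psi(W - \widehat{Q}\widehat{R})(:,j)\bigr\| \leq m\gamma'\|\Psi w_j\| + m^{3/2}\gamma'(1+m\gamma')\|\Psi w_j\| \lesssim (m^{3/2} + m)\gamma' \|\Psi w_j\|,
\end{equation*}
which absorbs the lower-order term $m^{5/2}\gamma'^2$ into the $(1+\mathcal{O}(\epsilon))$ factor hidden in $\gamma'$, and yields the first claimed inequality upon substituting the definition of $\gamma'$ for each precision setting.

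For the high-dimensional bound, I would invoke the $\epsilon$-embedding property of $\Psi$, which applies as soon as $\Psi$ embeds the columns of $W - \widehat{Q}\widehat{R}$ (a finite set of $m$ extra vectors, absorbed in the OSE assumption in the same way as all previous lemmas). Then $(1-\epsilon)\|(W - \widehat{Q}\widehat{R})(:,j)\| \leq \|\Psi(W - \widehat{Q}\widehat{R})(:,j)\|$ and $\|\Psi w_j\| \leq (1+\epsilon)\|w_j\|$, so the high-dimensional bound follows from the sketched one at the cost of an extra factor $(1+\epsilon)/(1-\epsilon)$, which again is swallowed by the $(1+\mathcal{O}(\epsilon))$ prefactor. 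The main subtlety, rather than an obstacle, is bookkeeping: ensuring that the $\epsilon$-embedding hypothesis at this stage covers the handful of additional vectors (columns of $\widehat{Q}$ and of the computed residual) beyond those already required by the earlier lemmas, which is why the statement is phrased with probability $1-\mathcal{O}(\delta)$ rather than $1-\delta$.
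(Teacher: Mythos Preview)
Your proposal is correct and follows essentially the same approach as the paper: combine the backward-stable representation $S[\widehat{R};0] = \Psi W + \Delta V$ from Theorem~\ref{thm:backwardfactorization} with the bound $\|\Delta Q\|_F \leq m^{3/2}\gamma'$ from the proof of Theorem~\ref{thm:conditionnumberQ}, split the sketched residual columnwise, and bound each piece. You are in fact slightly more explicit than the paper, which writes $\|\widehat{r}_j\| \lesssim \|\Psi w_j\|$ without justification and does not spell out the passage from the sketched bound to the high-dimensional one via the $\epsilon$-embedding property.
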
 

All the previous results are given as a function of the forward accuracy that we can expect from the sketching step. One very stable sketching process is given by the SRHT technique, as detailed below.

\begin{lemma}\label{lemma:srht}
    Let $\Omega \in \R^{\ell \times n}$ be drawn from SRHT distribution. Let it be applied in finite precision $\e$ in the following manner:
$$x \xrightarrow{\text{scale}} \sqrt{\frac{n}{\ell}} \cdot x \xrightarrow{\substack{\text{randomly} \\ \text{flip signs}}} D \cdot \sqrt{\frac{n}{\ell}} \cdot x \xrightarrow{\substack{\text{Hadamard} \\ \text{transform}}} HD \cdot \sqrt{\frac{n}{\ell}} \cdot x \xrightarrow{\text{sample}} PHD \cdot \sqrt{\frac{n}{\ell}} \cdot x.$$
    Then without any probabilistic requirement, $x \mapsto \Omega x$ is backward  stable with the following accuracy:
    $$\widehat{\Omega x} = \Omega(x + \Delta x), \quad \| \Delta x \| \leq \gamma_{\log_2(n) + 5} \|x\| $$
    Furthermore, the error is entirely made by the deterministic operations, namely the scaling steps and the Hadamard transform.
\end{lemma}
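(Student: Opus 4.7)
The plan is to walk through the four stages of the SRHT application in turn, bound the rounding error each one introduces, and then absorb the total round-off into a single backward perturbation $\Delta x$ of the input. An immediate simplification, which also yields the second claim of the lemma, is that multiplication by $D$ (entries $\pm 1$) and the row selection $P$ are both exact in floating point, so every rounding error must come from the scalar scaling by $\sqrt{n/\ell}$ and from the Walsh--Hadamard transform $H$. Moreover $H$ is unitary, so its forward and backward errors coincide in norm; this lets me transport the Hadamard error across to the input side without losing any factor.

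First, for the scaling step, computing $\sqrt{n/\ell}$ uses a division and a square root, and multiplying each coordinate of $x$ by this scalar adds one more rounding per entry, so $\fl(\sqrt{n/\ell}\,x)=\sqrt{n/\ell}(x+f_1)$ with $|f_1|\le\gamma_3|x|$ componentwise. The subsequent sign flip $D$ is exact, leaving the intermediate vector $\widehat{y}_1 = D\sqrt{n/\ell}(x+f_1)$ with $\|\widehat{y}_1\|\le\sqrt{n/\ell}(1+\gamma_3)\|x\|$. Second, for the Hadamard step, I would write $H=n^{-1/2}\widetilde{H}$ with $\widetilde{H}\in\{\pm 1\}^{n\times n}$ unnormalized and applied via the $\log_2(n)$-stage butterfly of disjoint pairwise additions/subtractions. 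A componentwise induction over the stages (the same one used for radix-$2$ FFT error analysis) gives $\|\fl(\widetilde{H}y)-\widetilde{H}y\|\le\gamma_{\log_2(n)}\|\widetilde{H}\|_2\|y\| = \gamma_{\log_2(n)}\sqrt{n}\|y\|$. Using $\widetilde{H}^{-1}=\widetilde{H}^t/n$ and $\|\widetilde{H}^t\|_2=\sqrt{n}$ one rewrites this as a backward perturbation $\fl(\widetilde{H}y)=\widetilde{H}(y+g)$ with $\|g\|\le\gamma_{\log_2(n)}\|y\|$. The subsequent scaling by $n^{-1/2}$ (one division, one square root, one multiplication per entry) is absorbed identically into $H$'s backward error, adding at most a constant number of roundings and producing $\fl(H\widehat{y}_1)=H(\widehat{y}_1+g_2)$ with $\|g_2\|\le\gamma_{\log_2(n)+c}\|\widehat{y}_1\|$ for a small explicit constant $c$.

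Finally, I would combine. Setting $\Delta x = f_1 + \sqrt{\ell/n}\,D^{-1}g_2$ (with $D^{-1}=D$, so no extra error) and using the exactness of $P$ gives $\widehat{\Omega x} = P\cdot\fl(H\widehat{y}_1) = PHD\sqrt{n/\ell}(x+\Delta x) = \Omega(x+\Delta x)$, and by the accumulation rules of \eqref{eq:errorrules} together with $\|\widehat{y}_1\|\le\sqrt{n/\ell}(1+\gamma_3)\|x\|$,
\[
\|\Delta x\| \le \|f_1\| + \sqrt{\ell/n}\|g_2\| \le \gamma_3\|x\| + \gamma_{\log_2(n)+c}(1+\gamma_3)\|x\| \le \gamma_{\log_2(n)+5}\|x\|
\]
for a suitable small constant $c$. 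The main delicate point is the bookkeeping of constants so as not to exceed $\log_2(n)+5$: it comes down to being attentive to how many roundings occur in each of the two scalings (the outer $\sqrt{n/\ell}$ and the inner $n^{-1/2}$ inside $H$) and to applying the rules of \eqref{eq:errorrules} tightly when merging them with the butterfly bound. No probabilistic argument enters anywhere, in agreement with the ``without any probabilistic requirement'' clause; the bound depends only on the deterministic facts that $P$ and $D$ are exact, that the butterfly has depth $\log_2(n)$, and that $H$ is unitary.
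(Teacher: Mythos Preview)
Your proposal is correct and follows essentially the same route as the paper: isolate the two exact steps ($D$ and $P$), charge $\gamma_3$ to the $\sqrt{n/\ell}$ scaling, obtain a $\gamma_{\log_2(n)+2}$ backward bound for the normalized Walsh--Hadamard transform via the depth-$\log_2(n)$ butterfly induction, and merge with the rules in \eqref{eq:errorrules}. The paper does spell out the Hadamard induction explicitly and is careful that the normalization by $n^{-1/2}$ costs at most two roundings (square root and scaling, with the square root exact when $\log_2 n$ is even), which is precisely the ``tight bookkeeping'' you flag as the delicate point needed to land on $\log_2(n)+5$ rather than $+6$.
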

\begin{proof}
    The proof is detailed in appendix. The initial scaling is backward stable with accuracy $\gamma_3$. The random flipping of signs is exact. The normalized Hadamard transform is backward stable with accuracy $\gamma_{\log_2(n)+2}$. The final sampling is exact.
\end{proof}
\begin{remark} \label{rmk:forwardstablesrht}
    The final statement of~\Cref{lemma:srht} motivates for the assumption that the backward error can be sketched as any other vector drawn independently of $\Omega$, yielding
    $$\widehat{\Omega x} = \Omega x + \Delta y, \quad \| \Delta y \| = \| \Omega \cdot \Delta x \| \leq (1+\epsilon) \| \Delta x \| \leq (1+\epsilon) \cdot \gamma_{\log_2(n) + 5} \cdot \| x \| \leq \frac{1+\epsilon}{1-\epsilon} \cdot \gamma_{\log_2(n) + 5} \cdot \| \Omega x \|$$
    and thus that the forward error is of the same magnitude as the backward error. In the proof for RHQR's stability, this would only double the finite number of vectors needed to be sketched in order to apply~\Cref{lemma:accuracyrhvector,lemma:accuracyapplicationreflector,lemma:sequencereflectors}. This would still be a negligible requirement on the sampling size, compared to the requirement for sketching the full range of $\widehat{Q}$, and thus the probabilities in~\Cref{thm:backwardfactorization,thm:conditionnumberQ,thm:factoerror} would still be $1-\mathcal{O}(\delta)$.
\end{remark}

If $k$ from equation~\eqref{eq:stabilitysketch} is $(1+\epsilon)(1-\epsilon)^{-1} \cdot (\log_2(n)+5)$, we see that the final bounds in~\Cref{thm:backwardfactorization,thm:conditionnumberQ,thm:factoerror} depend on $\log(n)$, $\ell$, and $m$. Since the dependence on the large dimension $n$ is logarithmic, the accuracy of RHQR then depends mainly on $m$ and $\ell$, the dimensions of the sketch of the input matrix $W$. If we don't make the assumption that the backward error of $x \mapsto \Omega x$ is well-sketched, we still obtain a bound that is only dependent on $\sqrt{n/\ell} \cdot (\log_2(n) + 5) \approx \sqrt{n}$. In turn, we should assume that $\gamma_{4k + 2\ell + 7} \approx 4\sqrt{n} \e \leq \f$ in~\Cref{lemma:accuracyapplicationreflector} such that the mixed precision bounds in~\Cref{thm:backwardfactorization,thm:conditionnumberQ,thm:factoerror} hold. As to the unique precision bound, they should be read by replacing instances of $\gamma_{12k+6\ell+31}$ with $12 \cdot \sqrt{n} \cdot \e$. \comment{ These bounds still cover most simulations settings, for instance $m = 10^3, n = 10^{11}$. }

\section{Application to Arnoldi and GMRES} \label{section:arnoldigmres}
As in~\cite[Algorithm 6.3, p163]{saad}, the left-looking RHQR factorization is straightforwardly embedded into the Arnoldi iteration, yielding~\Cref{algo:rhArnoldi}. Before detailing the algorithm, let us justify right away that it computes a basis of the Krylov subspace:

\begin{proposition}
    \Cref{algo:rhArnoldi} applied to $A \in \R^{n \times n}$, $b, x_0 \in \R^n$, and $\Omega \in \R^{\ell \times (n-m)}$ produces $Q_{m+1} = \left[ Q_m \; | \; q_{m+1} \right] \in \R^{n \times (m+1)}$ and $H_{m+1,m} \in \R^{(m+1)\times  m}$ such that 
    \begin{align} \label{eq:arnoldirelation} A Q_m = Q_{m+1} H_{m+1,m}.\end{align}
    and $q_1$ is a multiple of $b -A x_0$.
\end{proposition}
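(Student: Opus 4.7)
The plan is to recognize that the randomized Householder Arnoldi procedure is, by design, a left-looking RHQR factorization (\Cref{algo:rhqr_leftlooking}) applied incrementally to the Krylov matrix
$$W_{m+1} := \bigl[\, b - Ax_0 \;\big|\; Aq_1 \;\big|\; Aq_2 \;\big|\; \cdots \;\big|\; Aq_m \,\bigr] \in \R^{n \times (m+1)},$$
where the $(j{+}1)$-th column $w_{j+1} = Aq_j$ is not available from the outset but is produced on the fly from the basis that RHQR has built at the previous step. Once this identification is made, the conclusion will follow directly from \Cref{thm:mainthm}.

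First I would proceed by induction on $j$ to show that, after the $j$-th outer iteration of \Cref{algo:rhArnoldi}, the computed randomized Householder vectors $u_1, \ldots, u_{j+1}$ are exactly those that \Cref{algo:rhqr_leftlooking} would produce if run on the matrix formed by the first $j+1$ columns of $W_{m+1}$; and that the computed $q_1, \ldots, q_{j+1}$ satisfy
$$ q_i = \bigl(I_n - U_{j+1}\, T_{j+1}\, (\Psi U_{j+1})^t\, \Psi\bigr)\, e_i, \quad i \in \{1, \ldots, j+1\},$$
which is the $i$-th column of the thin $Q$ factor delivered by \Cref{thm:mainthm}. For the base case $j=0$, \Cref{prop:elimination} gives $P(u_1,\Psi) w_1 = -\sigma_1 \rho_1 e_1$; since $P(u_1,\Psi)^2 = I_n$, this yields $q_1 = P(u_1,\Psi) e_1 = -w_1/(\sigma_1 \rho_1)$, showing at once that $q_1$ is a scalar multiple of $b - Ax_0$. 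The inductive step uses that each new column $w_{j+1}=Aq_j$ depends only on quantities computed at iteration $j$, which matches the left-looking structure of RHQR (the $j$-th iteration only reads and writes columns $1,\ldots,j$ of $U$, $S$, $T$).

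Next, applying \Cref{thm:mainthm} to $W_{m+1}$ yields the factorization $W_{m+1} = Q_{m+1} R_{m+1}$ with $R_{m+1} \in \R^{(m+1)\times(m+1)}$ upper triangular. Reading column $j+1$ of this identity for $j = 1, \ldots, m$ gives
$$Aq_j \;=\; w_{j+1} \;=\; \sum_{i=1}^{j+1} R_{i,\,j+1}\, q_i.$$
Defining $H_{m+1,m} \in \R^{(m+1) \times m}$ by $(H_{m+1,m})_{i,j} := R_{i,\,j+1}$ for $1 \leq i \leq j+1$ and zero below, these identities, stacked for $j=1,\ldots,m$, are exactly the Arnoldi relation $AQ_m = Q_{m+1} H_{m+1,m}$, and $H_{m+1,m}$ is upper-Hessenberg because $R_{m+1}$ is upper-triangular. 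The multiple statement $q_1 \propto b - Ax_0$ is the base case of the induction above.

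The main obstacle is purely a bookkeeping one in the inductive step: one must check that the compact-form updates of $U$, $\Psi U$ and $T$ maintained by \Cref{algo:rhArnoldi} agree, column by column, with those of a standalone run of \Cref{algo:rhqr_leftlooking} on $W_{m+1}$, even though $w_{j+1}$ only becomes known after $q_j$ has been extracted. This interleaving is consistent precisely because RHQR is left-looking, so the $j$-th iteration touches only already-stored data, and the matrix-vector product $Aq_j$ can be scheduled before the sketch of column $j+1$ is needed, preserving the single-synchronization property of the underlying RHQR iteration.
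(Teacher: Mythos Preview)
Your proof is correct and follows essentially the same approach as the paper: both recognize that \Cref{algo:rhArnoldi} is performing an RHQR factorization of the Krylov-type matrix $[\,r_0 \mid Aq_1 \mid \cdots \mid Aq_m\,]$ and read the Arnoldi relation from the resulting upper-triangular structure. The only organizational difference is that you package the argument through an inductive equivalence with \Cref{algo:rhqr_leftlooking} and then invoke \Cref{thm:mainthm} as a black box, whereas the paper works directly with the reflector composition, using the observation that $h_j = P_{j+1}\cdots P_1\, Aq_j$ is left invariant by the later reflectors $P_{j+2},\ldots,P_{m+1}$ because its entries below index $j{+}1$ are already zero; this shortcut avoids the bookkeeping induction and the appeal to \Cref{thm:mainthm}.
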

\begin{proof}
    The proof is almost identical to that found in~\cite{saad}. Note that the inverse of the composition of randomized reflectors is not its transpose, which has no consequence in the proof. By the definition of the vector $z$ at~\cref{line:zjp1}, once the randomized Householder reflector is applied at~\cref{line:nextreflector}, we obtain:
    \begin{align} h_j = P_{j+1} P_j \cdots P_1 \cdot A q_j. \end{align}
    Since the coordinates $j+2, j+3 \cdots n$ of $h_j$ are zero, it is invariant under the remaining reflectors:
    \begin{align} \label{eq:hj} h_j = P_m \cdots P_{j+2} h_j = P_m \cdots P_1 A q_j. \end{align}
    This relation being true for all $j$, we obtain the factorization:
    $$P_m \cdots P_1 \left[r_0 \; \; A q_1 \; \; \hdots \; \; A q_m \right] = \left[h_0 \; \; h_1 \; \;  \cdots \; \; h_m \right].$$
    Multiplying on the left by the reflectors in reverse order $P_1 \cdots P_m$, we obtain:
    $$ \left[r_0 \; \; A q_1 \; \; \hdots \; \; A q_m \right] = P_1 \cdots P_m \left[h_0 \; \; h_1 \; \;  \cdots \; \; h_m \right] = \mathcal{P}_m^{-1} \left[h_0 \; \; h_1 \; \;  \cdots \; \; h_m \right]$$
    which concludes the proof.
\end{proof}

As Householder-Arnoldi costs more than MGS-Arnoldi, we see RHQR-Arnoldi costs more than RGS Arnoldi. The reason is similar : the vector $z$ used as input for the $j+1$-th RHQR iteration ($z$ in~\Cref{algo:rhArnoldi}) is not $A q_j$, but $P_j \cdots P_1 A q_j$. This induces an additional generalized matrix vector product in~\cref{line:addmatvec}, and in our context, an additional sketch and thus an additional synchronization in~\cref{line:addsync}. This additional cost is put into perspective by the greater stability of RHQR-Arnoldi when compared to RGS-Arnoldi on difficult examples, see~\Cref{section:experiments}. Since this work concerns large-scale contexts, we choose the current presentation in which the Arnoldi basis is its implicit form. This form is sufficient to compute the coordinates of an element from the Krylov subspace in the computed basis. However, one can also store the vectors $q$ from~\Cref{algo:rhArnoldi} along the iteration if the storage space is available.

\begin{algorithm}
\caption{Randomized Householder Arnoldi }\label{algo:rhArnoldi}
\KwInput{Matrix $A \in \mathbb{R}^{n\times n}$, $x_0, b \in \R^n$, $m \in \mathbb{N}^*$, matrix $\Omega \in \mathbb{R}^{\ell \times n}$, $m < l \ll n$,}
\KwOutput{Matrices $Q_m, H_{m+1,m}$ such that~\cref{eq:arnoldirelation} holds ($Q_m$ in compact form, but explicit is a byproduct and may be stored)}
\SetKwFunction{Rh}{RHVector}
\SetKwFunction{rhqrarnoldi}{RHQR\_Arnoldi}
\SetKwProg{Fn}{function}{:}{}
\Fn{ \rhqrarnoldi{$A, b, x_0, \Omega$}}{
$w \gets b - A x_0$ \\
$z \gets $ \RhSk{$w,\Omega$} \\
\For{$j = 1:m+1$}{
$u_j, s_j, \rho, \sigma, \beta \gets $ \Rh{$w$, $z$, $j$}  \\
$h_{j-1} \gets (w)_{1:j-1} \oslash (-\sigma \rho) \oslash 0_{m+1-j}$ \label{line:nextreflector}\\
$t_j \gets 0_{j-1} \oslash \beta \oslash 0_{m-j}$ \\
\If{$j \geq 2$}{
$(t_j)_{1:j-1} = - \beta \cdot T_{j-1} S_{j-1}^t s_j$ \\ 
}
\If{$j \leq m$}{
$q_j \gets e_j^{n} - U_j T_j S_j^t e_j^{\ell+m}$ \quad \textcolor{black}{\# Optional : store $q_j$}\\
$w\gets Aq_j$ \\
$z \gets $\RhSk{$z,\Omega$} \\
$w \gets w - U_j T_j^t S_j^t z$ \quad \label{line:zjp1} \label{line:addmatvec}\\
$z \gets $\RhSk{$w, \Omega$} \quad  \label{line:addsync}\\
}
}
Set $H$ as the first $m+1$ rows of $\left[h_1 \; h_2 \hdots h_m\right]$, discard $h_0$. \\
\KwRet $(u_j)_{j \leq m+1}, (s_j)_{j \leq m+1}, (t_j)_{j \leq m+1}, H$ \quad \textcolor{black}{\#$S = \Psi U$. Optional : return $(q_j)_{1 \leq j \leq m}$} 
}
\end{algorithm}

We also adapt the GMRES process to the RHQR-Arnoldi from~\Cref{algo:rhArnoldi} as in~\cite{saad}[Algorithm 6.10, p174]. We present this algorithm with the compact formulas of left-looking RHQR.

\begin{algorithm}
\caption{Randomized Householder GMRES}\label{algo:rhGMRES}
\KwInput{Matrix $A \in \mathbb{R}^{n\times n}$, $x_0, b \in \R^n$, $m \in \mathbb{N}^*$, matrix $\Omega \in \mathbb{R}^{\ell \times (n-m)}$, $m < l \ll n-m$,}
\KwOutput{$x_m \in \mathcal{K}_j(A,r_0)$ such that~\cref{eq:rgmresopti} holds}
\SetKwFunction{Rh}{RHVector}
\SetKwFunction{rhqrarnoldi}{RHQR\_Arnoldi}
\SetKwFunction{rhqrgmres}{RHQR\_GMRES}
\SetKwProg{Fn}{function}{:}{}
\Fn{ \rhqrgmres{$A, b, x_0, \Omega$}}{
$c \gets $ \RhSk{$b, \Omega$} \\
$\beta = \mathrm{sign}\langle b, e_1 \rangle \cdot \| c \|$ \\
$\left[U \; | \; u \right], \left[S \; | \; s \right], \left[T \; | \; t \right], H \gets $\rhqrarnoldi{$A,b,x_0,\Omega$}  \\
Solve Hessenberg system $H y = \beta e_1^{m+1}$ \\
$x \gets (I_n - U T S^t) (y \oslash 0_\ell)$ \\
\KwRet $x$
}
\end{algorithm}

\begin{proposition}
    Let $H_{m+1,m}, Q_m$ and $Q_{m+1}$ be the Arnoldi relation computed by RHQR-Arnoldi in~\Cref{algo:rhArnoldi}. Let us pick
    $$x_m = Q_m y, \quad y := \argmin_{y \in \R^m} \| \beta e_1 - H_{m+1,m} y \|, \quad \beta = \| \Omega b \|.$$
    Then
    \begin{align} \label{eq:rgmresopti} x_m = \argmin_{x \in \mathcal{K}_m(A,x_0)} \| \Psi(b - A x) \|.\end{align}
    In particular, if $W$ denotes any basis of $\mathcal{K}_m(A,x_0)$ and if $\Omega$ is an $\epsilon$-embedding for $\mathrm{Range}(W_2)$, then
    $$\|b - A x_m \| \leq \frac{1+\epsilon}{1-\epsilon} \argmin_{x \in \R^n} \| b - A x \|$$
\end{proposition}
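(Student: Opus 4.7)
The plan is to follow the classical GMRES derivation, but replacing $\ell_2$-orthogonality of the Arnoldi basis by the sketch-orthogonality $(\Psi Q_{m+1})^t \Psi Q_{m+1} = I_{m+1}$ that RHQR-Arnoldi produces by design, and then to use the $\epsilon$-embedding property at the very end to convert the sketched residual norm into a control on the true residual norm.

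First I would rewrite the quantity being minimized. For any $x = x_0 + Q_m y \in x_0 + \mathcal{K}_m(A,x_0)$, use the Arnoldi relation \eqref{eq:arnoldirelation} to obtain
\[
b - Ax = (b - Ax_0) - AQ_m y = (b - Ax_0) - Q_{m+1} H_{m+1,m}\, y.
\]
Sketching with $\Psi$ gives $\Psi(b-Ax) = \Psi(b-Ax_0) - \Psi Q_{m+1} H_{m+1,m}\,y$. Now the key observation, inherited from the first RHQR iteration inside~\Cref{algo:rhArnoldi}, is that $q_1$ is chosen precisely so that $\Psi q_1$ is a signed multiple of $e_1^{\ell+m}$, namely $\Psi(b-Ax_0) = \beta\,\Psi q_1 = \Psi Q_{m+1}\cdot (\beta e_1)$, with $\beta = \pm\|\Psi(b-Ax_0)\|$ as defined in~\Cref{algo:rhGMRES}. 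Therefore
\[
\Psi(b-Ax) = \Psi Q_{m+1}\bigl(\beta e_1 - H_{m+1,m}\,y\bigr).
\]

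Next I would invoke \Cref{thm:mainthm} (applied to the Arnoldi basis), which guarantees $(\Psi Q_{m+1})^t \Psi Q_{m+1} = I_{m+1}$. Hence $\Psi Q_{m+1}$ is an isometry from $\R^{m+1}$ into $\R^{\ell+m}$, and
\[
\|\Psi(b - Ax)\| = \bigl\|\beta e_1 - H_{m+1,m}\,y\bigr\|.
\]
Minimizing over $y\in\R^m$ is exactly the low-dimensional least-squares problem solved in~\Cref{algo:rhGMRES}, which proves~\eqref{eq:rgmresopti}: the returned $x_m = x_0 + Q_m y$ is the minimizer of $x \mapsto \|\Psi(b-Ax)\|$ over the affine Krylov subspace.

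For the last assertion, I would use the $\epsilon$-embedding hypothesis. If $\Omega$ is an $\epsilon$-embedding of $\mathrm{Range}(W_2)$ for some basis $W$ of $\mathcal{K}_m(A,x_0)$, then by the block structure of $\Psi$ (and Pythagoras, as in the final step of the proof of~\Cref{thm:mainthm}) $\Psi$ is an $\epsilon$-embedding of the vector subspace $\mathrm{Span}\{b - Ax : x \in x_0 + \mathcal{K}_m(A,x_0)\}$, which is finite-dimensional. Thus for every such $x$, $(1-\epsilon)\|b-Ax\| \le \|\Psi(b-Ax)\| \le (1+\epsilon)\|b-Ax\|$. Applying the lower bound at $x_m$ and the upper bound at the true least-squares minimizer $x^\star \in x_0 + \mathcal{K}_m(A,x_0)$, together with the optimality of $x_m$ in the sketched problem, yields
\[
\|b - Ax_m\| \le \tfrac{1}{1-\epsilon}\|\Psi(b-Ax_m)\| \le \tfrac{1}{1-\epsilon}\|\Psi(b-Ax^\star)\| \le \tfrac{1+\epsilon}{1-\epsilon}\|b-Ax^\star\|,
\]
which is the claimed quasi-optimality. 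The only subtlety I anticipate is bookkeeping the affine shift by $x_0$ consistently with the convention used in \Cref{algo:rhGMRES} (where the returned vector is written as $(I_n - UTS^t)(y\oslash 0_\ell)$) and making sure the $\beta e_1$ term really equals $\Psi(b-Ax_0)$ in the basis $\Psi Q_{m+1}$; everything else is a direct transcription of the standard GMRES argument with $\ell_2$-inner product replaced by the sketched inner product.
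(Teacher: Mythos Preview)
Your argument is correct and follows the same approach as the paper's proof, which is a one-line computation relying on the sketch-orthogonality of $\Psi Q_{m+1}$ and the Arnoldi relation; you simply spell out the intermediate steps (the identification $\Psi(b-Ax_0)=\Psi Q_{m+1}\cdot(\beta e_1)$ and the isometry $\|\Psi Q_{m+1}z\|=\|z\|$) more explicitly. In fact you go further than the paper by actually writing out the $\epsilon$-embedding sandwich for the quasi-optimality bound, which the paper's proof omits entirely, and you are right to flag the $x_0$ bookkeeping as the only place requiring care.
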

\begin{proof}
    Just as in RGS, it follows from the orthogonality of the sketch $\Psi Q_{m+1}$ of the Arnoldi basis built by~\Cref{algo:rhArnoldi}:
    $$\argmin_{\rho \in \R^m} \|\beta e_1 - H_{m+1,m} \rho \| = \argmin_{\rho \in \R^m} \|\Psi Q_{j+1} \left[ \beta e_1 - H_{m+1,m} \rho \right] \| = \argmin_{\rho \in \R^m} \|\Psi \left[ b - A Q_j \rho \right] \|$$
\end{proof}

\section{The RHQR based BLAS2-RGS} \label{section:rmgs}
In this section, we derive a BLAS2-RGS process based on the RHQR algorithm applied to matrix $W \in \R^{n \times m}$ topped by a $m \times m$ bloc of zeros. This derivation is similar to that of~\cite{barlow} for the deterministic case, which was motivated by a remark made by Charles Sheffield to Gene Golub.

Let us take $\Omega \in \R^{\ell \times n}$, and suppose that $W$ and $\Omega W$ are full-rank. Let us form $\Psi \in \R^{(\ell + m) \times (n+m)}$, and perform the RHQR factorization of the also full rank matrix
\begin{align} \label{eq:Wring}
    \widebar{W} = \left[ \begin{matrix} 0_{m \times m} \\ W \end{matrix} \right] \in \R^{(n+m) \times m}.
\end{align}
We stress that in such context, the sign in the construction of the randomized Householder vectors is never flipped. Let us denote $\widebar{u}_1 \hdots \widebar{u}_m \in \R^{n+m}$ the randomized Householder vectors, forming the matrix $\widebar{U}$. Let us also scale them such that the diagonal of $\widebar{U} \in \R^{n \times m}$ is the identity matrix $I_m$. Let us denote $T$ the associated T factor. By construction, $\widebar{U}$ and its sketch $\Psi \widebar{U} $ are of the form
$$ \widebar{U} = \left[ \begin{matrix} I_m \\ U \end{matrix} \right] \in \R^{n \times m}, \quad \Psi \widebar{U} = \left[ \begin{matrix} I_m \\ \Omega U \end{matrix} \right] \in \R^{(\ell+m) \times m}, \quad U \in \R^{n \times m}.$$
Seeing as the composition of the randomized Householder reflectors is non-singular, $R$ is also non-singular. Hence, writing the factorization in its compact form, we get (in exact arithmetics)
$$\left[ \begin{matrix} R - T R \\ - U T R \end{matrix} \right] = \left[ \begin{matrix} 0_{m \times m} \\ W \end{matrix} \right]  \implies  \begin{dcases} \; T = I_m \\ \; -U R = W \end{dcases}$$
Let us write $\widebar{Q}$ the thin Q factor output by RHQR. In this context,
$$\widebar{Q} = \begin{bmatrix} I_m \\ 0_{(n+m)\times m} \end{bmatrix} - \left[ \begin{matrix} T \\ U \end{matrix} \right] T \left[ I_m \quad (\Omega U)^t \right] \cdot \begin{bmatrix} I_m \\ 0_{\ell \times m} \end{bmatrix} = \begin{bmatrix} I_m \\ 0_{(n+m)\times m} \end{bmatrix} - \left[ \begin{matrix} T \\ U \end{matrix} \right] = - \left[ \begin{matrix} 0_{m \times m} \\ U \end{matrix} \right].$$
$\Psi \widebar{Q}$ is orthogonal by construction, hence, denoting $Q = -U$, we see that $\Omega Q$ is orthogonal. Hence $W = QR$ is a randomized QR factorization.

Let us then study in detail the arithmetics of one iteration. Remembering that we choose to scale $u_1$ such that its first coordinate is $+1$, it is clear that it is given by
$$z_1 \gets \left[ \begin{matrix} 0_m \\ w_1 \end{matrix} \right] - \| \Omega w_1 \| e_1, \quad u_1 \gets \frac{1}{-\| \Omega w_1 \|} \cdot z_1 = \left[ \begin{matrix} e_1 \\ -w_1/\| \Omega w_1 \| \end{matrix} \right] := \left[ \begin{matrix} e_1 \\ -q_1 \end{matrix} \right], \quad \| \Psi u_1 \|^2 = 2, \quad \| \Omega q_1 \| = 1$$
(remark that both scaling of $u_1$ are equivalent in this context). Hence the first vector $q_1$ is the same as that built by RGS on $W$ with sketching matrix $\Omega$. At iteration $j \geq 2$, denoting $T_{j-1}$ the T factor at iteration $j-1$ denoting $U_{j-1}$ the matrix formed by $u_1, \hdots u_{j-1}$, denoting $Q_{j-1}$ the matrix formed by $q_1, \hdots q_{j-1}$, we first write
$$z_j \gets P_{j-1} \cdots P_1 \cdot \left[ \begin{matrix} 0_{j-1} \\ 0_{m-j+1} \\ w_j \end{matrix} \right] =  \left[ \begin{matrix} 0_{j-1} \\ 0_{m-j+1} \\ w_j \end{matrix} \right] - \left[ \begin{matrix} I_{j-1} \\ 0_{1 \times (m-j+1)} \\ - Q_{j-1} \end{matrix} \right] T_{j-1}^t \left[ I_{j-1} \; \;  0_{(m-j+1) \times 1} \; \; (-\Omega Q_{j-1})^t \right] \left[ \begin{matrix} 0_{j-1} \\ 0_{m-j+1} \\ w_j \end{matrix} \right]$$
yielding
$$z_j \gets \left[ \begin{matrix} T_{j-1}^t (\Omega Q_{j-1})^t \Omega w_j \\ 0_{m-j+1} \\ w_j - Q_{j-1} T_{j-1}^t (\Omega Q_{j-1})^t \Omega w_j \end{matrix} \right] =: \left[ \begin{matrix} x \\ 0_{m-j+1} \\ y \end{matrix} \right].$$
By design of the RHQR algorithm, $x$ (i.e the first $j-1$ entries of $z_j$) gives the first $j-1$ entries of the $j$-th column of the R factor. Now computing the vector $u_j$,
$$\widebar{u}_j \gets \left[ \begin{matrix} 0_{m} \\ y \end{matrix} \right] - \| \Omega y \| e_j, \quad u_j \gets \frac{1}{-\|\Omega y \|} \cdot \widebar{u}_j = \left[ \begin{matrix} 0_{j-1} \\ 1 \\ 0_{m-j} \\ -y/\| \Omega y \| \end{matrix} \right] = \left[\begin{matrix} e_j \\ -q_j \end{matrix} \right]$$
By design of the RHQR algorithm, the $j$-th entry of the $j$-th column of $R$ is given by $\| \Omega y \|$. Seeing as $T_m = I_m$ in exact arithmetics, we are supposed to get
$$\tilde{q}_j \gets w_j - Q_{j-1} (\Omega Q_{j-1})^t \Omega w_j, \quad q_j \gets \frac{1}{\| \Omega \tilde{q}_j \|} \cdot \tilde{q}_j$$
which proves the following claim:
\begin{proposition}
    The BLAS2-RGS in~\Cref{algo:blas2rgs} is equivalent to RGS in exact arithmetics.
\end{proposition}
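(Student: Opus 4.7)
The plan is to proceed by induction on $j$ and verify that at each iteration, BLAS2-RGS produces exactly the same $j$-th column of $Q$ and $R$ as RGS would, with the auxiliary invariant that the running $T$ factor stays equal to the identity. Almost every ingredient is already assembled in the derivation that precedes the statement; the remaining work is to tie the strands together.

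For the base case, observe that $u_1 = [e_1; -q_1]$ with $q_1 = w_1/\|\Omega w_1\|$, so $\|\Psi u_1\|^2 = 1 + \|\Omega q_1\|^2 = 2$, giving $\beta_1 = 1$ and $T_1 = [1]$. The first column of $R$ produced by BLAS2-RGS is then $\|\Omega w_1\|\cdot e_1$, exactly what RGS would output, and $q_1$ coincides with the first RGS basis vector.

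For the inductive step, assume that at iteration $j$ the vectors $q_1,\ldots,q_{j-1}$ already agree with those from RGS (so in particular $(\Omega Q_{j-1})^t \Omega Q_{j-1} = I_{j-1}$) and that $T_{j-1} = I_{j-1}$. Substituting $T_{j-1} = I_{j-1}$ into the expression for $z_j$ derived just above the statement, the top block $x$ reduces to $(\Omega Q_{j-1})^t \Omega w_j$ and the bottom block $y$ reduces to $w_j - Q_{j-1}(\Omega Q_{j-1})^t \Omega w_j = \tilde q_j$, the usual RGS residual. Thus the first $j-1$ entries of the $j$-th column of $R$ are the RGS projection coefficients, the diagonal entry is $\|\Omega \tilde q_j\|$, and the new basis vector is $q_j = \tilde q_j/\|\Omega \tilde q_j\|$, matching RGS exactly.

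It remains to close the induction by showing $T_j = I_j$. Using the recursive update from \Cref{prop:generalcompactwoodburry}, $T_j$ is obtained from $T_{j-1} = I_{j-1}$ by appending the row $[0_{1\times(j-1)}\ \beta_j]$ and the column $-\beta_j T_{j-1}(\Psi U_{j-1})^t \Psi u_j$. Since $\Psi u_k = [e_k; -\Omega q_k]$ for $k \le j$ by construction, the inner product $(\Psi u_k)^t \Psi u_j$ equals $e_k^t e_j + (\Omega q_k)^t(\Omega q_j)$, which vanishes for $k < j$ by the sketch-orthogonality already established for the previously accepted columns; this kills the off-diagonal block. Moreover, the same computation gives $\|\Psi u_j\|^2 = 1 + \|\Omega q_j\|^2 = 2$, so $\beta_j = 1$ and the new diagonal entry of $T_j$ is $1$, completing the induction.

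The only nontrivial piece is the simultaneous update of the invariant $T_{j-1} = I_{j-1}$ and the sketch-orthogonality used to kill the coupling term in the $T$-update; once both are tracked together, the equivalence with RGS is essentially a direct readout of the formulas already displayed before the proposition, and no new estimate is needed.
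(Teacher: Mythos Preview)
Your proof is correct and tracks the paper's own derivation closely: both arguments reduce each iteration of BLAS2-RGS to the RGS update $\tilde q_j = w_j - Q_{j-1}(\Omega Q_{j-1})^t\Omega w_j$ once one knows $T_{j-1}=I_{j-1}$. The one organizational difference is how that identity is obtained. The paper reads $T_m=I_m$ off in one shot from the compact factorization of $\widebar W$, namely $[R-TR;\,-UTR]=[0;\,W]$ with $R$ non-singular, and then uses that $T_{j-1}$ is the leading principal block of $T_m$. You instead carry $T_{j-1}=I_{j-1}$ as an inductive invariant and close it by checking $(\Psi u_k)^t\Psi u_j = e_k^t e_j + (\Omega q_k)^t(\Omega q_j)=0$ for $k<j$, which follows from the sketch-orthogonality of the already-accepted $q_k$. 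Both routes are valid; the paper's is a little shorter, while yours is more self-contained and makes the mechanism (sketch-orthogonality $\Rightarrow$ trivial $T$) explicit.
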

\begin{algorithm}
\caption{BLAS2-RGS}\label{algo:blas2rgs}
\KwInput{Matrix $W \in \mathbb{R}^{n\times m}$, matrix $\Omega \in \R^{\ell \times n}$}
\KwOutput{Matrices $R$, $Q$ such that $W = QR$}
\For{$j = 1:m$}{
$w \gets w_j$ \\
\If{$j \geq 2$}{
$z \gets \Omega w$\\
$r_j \gets  T_{j-1}^t S_{j-1}^t z$ \quad \textcolor{black}{\#$S_{j-1} = (s_i)_{1 \leq i \leq j-1}$, $T_{j-1} =$ first $j$ rows and $j$ columns of $(t_i)_{1 \leq i \leq j}$}\\
$w \gets w_j - Q_{j-1} r_j$ \quad \textcolor{black}{\#$U_{j-1} = (u_i)_{1 \leq i \leq j-1}$}\\
}
$z \gets \Omega w$ \\
$\rho = \| \Omega z \|$ \\
$q_j \gets \rho^{-1} \cdot w, \quad s_j \gets \rho^{-1} \cdot s$ \\
$r_j \gets r_j \oslash \rho \oslash 0_{m-j}$ \\
$t_j \gets 0_{j-1} \oslash 1 \oslash 0_{m-j}$ \\
$(t_j)_{1:j-1} \gets - T_{1:j-1, 1:j-1} \cdot S_{j-1}^t s_j$\\
}
\textit{Optional : } $Q_j \gets Q_j \cdot T_m$\\
\KwRet $(r_j)_{ j \leq m}, (q_j)_{ j \leq m}$ 
\end{algorithm}

We note that BLAS2-RGS may be adapted in a BLAS3 block version.

\section{Randomized reflectors with partial sketching}\label{section:rhqrbis}

We showcase here another randomization of the Householder reflector, allowing for another RHQR process (trimRHQR) which produces accurate factorizations and well conditioned bases in all cases of our test set. This modified algorithm does not compute a basis whose sketch is orthogonal, but experimentally we observed that the sampling size is smaller than the one required by RHQR, and can even be smaller than the column dimension of the input matrix.
  
Let us pick a matrix $\Omega \in \R^{\ell \times m}$ with unit columns. Let us rule out the possibility that the $m$ first columns of $\Omega$ are orthogonal to the last $n-m$, nor that they are orthogonal to each other. One can verify as in~\Cref{prop:elimination} that with any such matrix $\Omega$, it is possible to eliminate all coordinates of an input vector below its first entry, setting $u = w \pm \|\Omega u\| e_1$, and using the fact that $\| \Omega e_1 \| = 1$:
$$P(u, \Omega) \cdot w = \pm \| \Omega w \| e_1$$
If we denote $w = c \oslash d$, $d \in \R^{n-1}$, if we denote $w' = 0 \oslash d$ and set $u = w' - \| \Omega w' \| e_2$, one will remark that the operator $P(u, \Omega)$ should modify the first entry of some input vector. This comes from the geometric properties that we ruled out for the first $m$ columns of $\Omega$ (this observation motivated for the design of $\Psi$ from~\Cref{section:rhqr} in the first place). We can still enforce the behavior we seek. Let us denote $\Omega' \in \R^{\ell \times (n-1)}$ the matrix formed by the last $n-1$ columns of $\Omega$. For the same reasons as before, denoting $u' = d - \| \Omega' d \| e_1 \in \R^{n-1}$, we can build a reflector $P(\Omega', u') \in \R^{(n-1) \times (n-1)}$ that is sketch isometric with respect to $\Omega'$, and that verifies
$$P(\Omega', u') \cdot w' = \| \Omega' w' \| e_1 \in \R^{n-1}$$
Denoting now
$$H' = \left[ \begin{array}{c|ccc} 1 & & & \\ \hline & & & \\ & & P(\Omega',u') & \\ & & & \end{array}\right] \in \R^{n \times n}$$
we get by design a reflector that was forced to leave the first line of any input untouched, while cancelling the entries of a given input below its second one. However, we have lost the sketch-isometric property of the final reflector of $\R^n$. It is by design sketch-isometric for the following matrix
$$\Psi' = \left[ \begin{array}{c|ccc} 1 & & & \\ \hline & & & \\ & & \Omega' & \\ & & & \end{array}\right] \in \R^{(\ell+1) \times n}$$
Let us generalize this construction. First, we define the sequence of sketching matrices
\begin{align} \label{eq:psij} \Psi_1 := \Omega; \quad \quad \forall j \in \{2 \hdots m\}, \quad \Psi_j := \left[ \begin{array}{c|ccc} I_{j-1} & & & \\ \hline & & & \\ & & \Omega_{j:n} & \\ & & & \end{array}\right] \in \R^{(\ell+j-1) \times n} \end{align}
and the modified randomized Householder reflectors
\begin{align} \label{eq:rhousebis} \forall z \in \R^n \setminus \Ker{\Psi_j}, \quad H(z, \Omega, j) := P(z, \Psi_j)) = I_n - \frac{2}{\| \Psi_j z \|^2} \cdot z \cdot (\Psi_j z)^t \Psi_j \quad \in \R^{n \times n}.\end{align}
We remark that if $u_j$ is a vector which first $j-1$ entries are nil, then~\cref{eq:rhousebis} can be equivalently formulated
\begin{align} \label{eq:rhousebisbis} P(u_j, \Psi_j) = H(u_j, \Omega, j) = I_n - \frac{2}{\| \Omega u_j \|^2} \cdot u_j (\Omega u_j)^t \left[0_{\ell \times (j-1)} \; \; \Omega_{j:n} \right] \end{align}
We summarize their properties in the following proposition:
\begin{proposition}
    Let $j \in \{1, \hdots , m\}$, let $w \in \R^n$, where $w = c \oslash d$, with $c \in \R^{j-1}$ and $d \in \R^{n-(j-1)}$, where $\Omega \in \R^{\ell \times n}$ has unit first $m$ columns. Let us denote $w' = 0_{j-1} \oslash d$. Suppose that $\Omega d \neq 0$ and that $w'$ is not a multiple of $e_j$. Define $u_j = w' - \| \Omega w' \| e_j \in \R^n$. Then $H(u_j, \Omega, j) \cdot w = c \oslash ( \| \Omega w' \| e_1^{n-j+1})$.
\end{proposition}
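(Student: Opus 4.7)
The plan is to mirror the proof of Proposition on elimination in Section \ref{section:rhqr} (the analog for $\Psi$), adapted to the new sketching matrix $\Psi_j$ from~\cref{eq:psij}. The key algebraic identity to exploit is that $\|\Omega e_j\| = 1$ (guaranteed by the hypothesis that the first $m$ columns of $\Omega$ are unit), together with the block structure of $\Psi_j$, which together force the reflection coefficient to be exactly $1$.

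First I would unpack the definition and note two structural facts. Since $w' = 0_{j-1} \oslash d$ has its first $j-1$ entries zero and $e_j$ affects only the $j$-th entry, the vector $u_j = w' - \|\Omega w'\| e_j$ also has its first $j-1$ entries equal to zero. Writing $u_j = 0_{j-1} \oslash v$ and using the block design of $\Psi_j$, one reads off
\begin{equation*}
\Psi_j w = c \oslash (\Omega_{j:n}\, d) = c \oslash (\Omega w'), \qquad \Psi_j u_j = 0_{j-1} \oslash (\Omega_{j:n}\, v) = 0_{j-1} \oslash (\Omega u_j),
\end{equation*}
where the identifications $\Omega w' = \Omega_{j:n}\, d$ and $\Omega u_j = \Omega_{j:n}\, v$ come from the leading zeros of $w'$ and $u_j$.

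Second, I would compute the reflection coefficient $2\langle \Psi_j w,\Psi_j u_j\rangle / \|\Psi_j u_j\|^2$. The leading zeros in $\Psi_j u_j$ kill the contribution of $c$, so $\langle \Psi_j w, \Psi_j u_j\rangle = \langle \Omega w',\Omega u_j\rangle$ and $\|\Psi_j u_j\|^2 = \|\Omega u_j\|^2$. Expanding $\Omega u_j = \Omega w' - \|\Omega w'\|\,\Omega e_j$ and using $\|\Omega e_j\|=1$ gives
\begin{equation*}
\langle \Omega w',\Omega u_j\rangle = \|\Omega w'\|^2 - \|\Omega w'\|\langle \Omega w',\Omega e_j\rangle, \qquad \|\Omega u_j\|^2 = 2\|\Omega w'\|^2 - 2\|\Omega w'\|\langle \Omega w',\Omega e_j\rangle,
\end{equation*}
hence the ratio equals exactly $1$. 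Consequently $H(u_j,\Omega,j)\cdot w = w - u_j = (c \oslash d) - (0_{j-1}\oslash d) + \|\Omega w'\| e_j = c \oslash (\|\Omega w'\| e_1^{n-j+1})$, which is the claim.

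The only subtle point, and essentially the reason $\Psi_j$ was designed as in~\cref{eq:psij} rather than using $\Omega$ directly, is the identity $\|\Omega e_j\|=1$: it is precisely what forces the factor $2$ in $\|\Omega u_j\|^2$ to match $2\|\Omega w'\|^2$ up to the cross term, giving coefficient exactly $1$. If $\Omega e_j$ were not unit, the reflector would introduce an extraneous scaling in the first $j-1$ coordinates of the output, and the elimination property would fail. So the whole calculation hinges on that single normalization hypothesis, and the rest is bookkeeping with the block structure of $\Psi_j$.
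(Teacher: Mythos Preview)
Your proof is correct and follows the same approach as the paper, which simply refers back to the proof of \Cref{prop:elimination} with the observation that the key identity $\|\Psi e_j\|=1$ there is replaced by the hypothesis $\|\Omega e_j\|=1$ here. One minor inaccuracy in your closing remark: if $\|\Omega e_j\|\neq 1$ the first $j-1$ coordinates of the output would still equal $c$ (since $u_j$ has zero leading entries and the reflector subtracts a multiple of $u_j$); the failure would instead be in the annihilation of entries $j{+}1$ through $n$, because the reflection coefficient would no longer equal $1$.
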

\begin{proof}
    The proof is identical to that of~\Cref{prop:elimination}, only with the important hypothesis that the vectors $\Omega e_1 \hdots \Omega e_m$ are unit vectors. 
\end{proof}

Using successive modified randomized Householder reflectors $H(u_1, \Omega, 1), \hdots, H(u_m, \Omega, m)$, it is then possible to get the following factorization
\begin{align}\label{eq:trimrhqrfact}
H(u_m, \Omega, m) \cdots H(u_1, \Omega, 1) \cdot W = \begin{bmatrix}R \\ 0_{(n-m) \times m} \end{bmatrix} \iff W = H(u_1, \Omega, 1) \cdots H(u_m, \Omega, m) \cdot \begin{bmatrix} R \\ 0_{(n-m) \times m} \end{bmatrix}  \end{align}
where $R \in \R^{m \times m}$ is upper-triangular by construction. We show this procedure in~\Cref{algo:rhousevectorbis,algo:rhqrbis_rightlooking}. In~\cref{line:rhousevecbissketch} of~\Cref{algo:rhousevectorbis}, one communication is made to sketch $w'$. If the sketches of the first $m$ canonical vectors are available, then no further synchronization is required. In~\Cref{algo:rhqrbis_rightlooking}, we make the same memory management indications as in~\Cref{algo:rhqr_rightlooking}. We note that the sketching made in~\cref{line:rhqrbisbigsketch} can be done in the communication made before the computation of the modified randomized Householder vector.

\begin{algorithm}
\caption{Computation of modified randomized Householder vector (trimRHVector)}\label{algo:rhousevectorbis}    
\KwInput{vector $\tilde{w} \in \R^{n-j+1}$, $j \in \{1, \hdots, m\}$, $\tilde{\Omega} \in \R^{\ell \times (n-j+1)}$ with unit first $m-j+1$ columns}
\KwOutput{$v, v', \rho, \sigma, \beta$ such that $\tilde{w} - \beta \langle v', \tilde{\Omega} \tilde{w} \rangle v = -\sigma \rho e_1^{n-j+1}$}
\SetKwFunction{Rhb}{trimRHVector}
\SetKwProg{Fn}{function}{:}{}
\Fn{\Rhb{$\tilde{w}$, $\tilde{\Omega}$}}{
Sketch $\tilde{\Omega} \tilde{w}$ \label{line:rhousevecbissketch} \\
$\rho \gets \| \tilde{\Omega} \tilde{w} \|$\\
$\sigma = \mathrm{sign}\langle \tilde{w}, e_1\rangle$\\
$v \gets \tilde{w} + \sigma \rho e_1$\\
Sketch $\tilde{\Omega} v$ \\
$v' \gets \tilde{\Omega} v$ \\
Choose $\alpha$ as the first entry of $v'$ or as $\sqrt{2}/\|v'\|$ \\
$v \gets \alpha v$ \\
$v' \gets \alpha v'$ \\
$\beta = 2/\|s\|^2$ \quad \# for second choice of $\alpha$, just set $\beta = 1$\\
\KwRet $v,v', \rho$, $\sigma$, $\beta$
}
\end{algorithm}

\begin{algorithm}
\caption{trimRHQR (right-looking)}\label{algo:rhqrbis_rightlooking}
\KwInput{Matrix $W \in \mathbb{R}^{n\times m}$, matrix $\Omega \in \mathbb{R}^{\ell \times n}$, $m < l \ll n$}
\KwOutput{$R, U, S$ such that $S = \Omega U$ and~\cref{eq:trimrhqrfact} holds}
\SetKwFunction{Rhb}{trimRHVector}
\SetKwFunction{Rhqrbr}{trimRHQR\_right}
\SetKwProg{Fn}{function}{:}{}
\Fn{ \Rhqrbr{$W$, $\Omega$}}{
\For{$j = 1:m$}{
$r_j \gets (w_j)_{1:j-1}$ \\
$w \gets w_j$ \\
$u_j, \rho_j, \sigma, \beta = $ \Rhb{$w_j$, $\Omega$, $j$}\\
$r_j \gets r_j \oslash (-\sigma \rho) \oslash 0_{m-j}$ \\
\If{$j \geq 2$}{
$X \gets \left[0_{\ell \times (j-1)} \; \; \Omega_{j:n}\right] W_{j+1:n}$ \label{line:rhqrbisbigsketch} \\
$W_{j+1:m} \gets W_{j+1:m} - \beta u_j s_j^t X$ \\
}
}
\KwRet $(r_j)_{1 \leq j \leq m}, (u_j)_{1 \leq j \leq m}, (s_j)_{1 \leq j \leq m}$
}
\end{algorithm}

We focus now on the formulas that allow to handle multiple modified randomized Householder reflectors efficiently, allowing for the left-looking version.

\begin{proposition} \label{thm:Qbis} Let $u_1 \hdots u_m$ be the Householder vectors generated by~\Cref{algo:rhqrbis_rightlooking}. For all $j \in \{1, \hdots , m\}$, define $\beta_j = 2 / \| \Omega u_j \|^2$. Define by induction the matrix $T_j$ for all $1 \leq j \leq m$ as
    \begin{align}\label{eq:Tqdefbis}
    \begin{split}
        & T_1 \in \R^{1 \times 1}, \; \; T_1 = \left[ \beta_1 \right] \\
        & \forall 1 \leq j \leq m-1, \; \; T_{j+1} = \left[ \begin{matrix} T_j & - \beta_j \cdot T_j(\Omega U_j)^t \Omega u_{j+1} \\ 0_{1 \times j} & \beta_j \end{matrix} \right]
        \end{split}
    \end{align}
    Then for all $1 \leq j \leq m$, we have the factored form:
    \begin{align}\label{eq:compactrightbis}
\mathcal{H}_j^{-1} := H(u_1, \Omega, 1) \cdots H(u_j, \Omega, j) = P(u_1, \Psi_1) \cdots P(u_j, \Psi_j)  = I - U_{j} T_j \mathrm{ut}\left( (\Omega U_{j})^t \Omega \right).
\end{align}
\end{proposition}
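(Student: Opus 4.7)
The plan is to proceed by induction on $j$, closely mirroring the proof of Proposition~\ref{prop:generalcompactwoodburry} but exploiting the structure of $\mathrm{ut}$ that arises because $u_j$ has its first $j-1$ entries equal to zero.

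For the base case $j=1$, the vector $u_1$ is nonzero only from index $1$ onward, so $\Psi_1 = \Omega$ and $H(u_1,\Omega,1) = I - \beta_1\, u_1 (\Omega u_1)^t \Omega$. Meanwhile $(\Omega U_1)^t \Omega$ is a single row, and applying $\mathrm{ut}$ to a row vector leaves it unchanged, so the identity is immediate.

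For the inductive step, the first task is to rewrite $H(u_{j+1},\Omega,j+1)$ in a form compatible with $\mathrm{ut}$. Because the first $j$ entries of $u_{j+1}$ vanish, $\Psi_{j+1} u_{j+1} = 0_j \oslash \Omega u_{j+1}$, hence
\[
(\Psi_{j+1} u_{j+1})^t \Psi_{j+1} = \bigl[\,0_{1\times j} \; \; (\Omega u_{j+1})^t \Omega_{j+1:n}\,\bigr] = e_{j+1}^t \,\mathrm{ut}\!\left((\Omega U_{j+1})^t \Omega\right),
\]
which identifies precisely the $(j+1)$-th row of $M := \mathrm{ut}\!\left((\Omega U_{j+1})^t \Omega\right)$. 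Thus $H(u_{j+1},\Omega,j+1) = I - \beta_{j+1}\, u_{j+1}\, e_{j+1}^t M$. The second observation is that the first $j$ rows of $M$ coincide with $\mathrm{ut}\!\left((\Omega U_j)^t \Omega\right)$, so the induction hypothesis gives $\mathcal{H}_j^{-1} = I - U_j T_j M_{1:j,:}$.

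The calculation then amounts to expanding the product $\mathcal{H}_j^{-1} \cdot H(u_{j+1},\Omega,j+1)$ and showing that the cross term simplifies cleanly. The key simplification, which I expect to be the main (mild) obstacle, is computing $M_{1:j,:} \cdot u_{j+1}$: since $u_{j+1}$ has its first $j$ entries equal to zero, the strict lower-triangular zeros introduced by $\mathrm{ut}$ are irrelevant, and one obtains $M_{1:j,:} u_{j+1} = (\Omega U_j)^t \Omega u_{j+1}$. Substituting back,
\[
\mathcal{H}_{j+1}^{-1} = I - U_j T_j M_{1:j,:} - \beta_{j+1}\bigl(u_{j+1} - U_j T_j (\Omega U_j)^t \Omega u_{j+1}\bigr) e_{j+1}^t M.
\]
The bracketed vector is exactly the $(j+1)$-th column of $U_{j+1} T_{j+1}$ as defined by the recursion~\eqref{eq:Tqdefbis}, and the first summand is the contribution of its first $j$ columns applied to $M_{1:j,:}$. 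Hence the expression equals $I - U_{j+1} T_{j+1} M$, completing the induction.
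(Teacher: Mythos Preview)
Your proposal is correct and follows essentially the same inductive argument as the paper's proof. The two key observations you isolate --- that the $(j+1)$-th row of $\mathrm{ut}\!\left((\Omega U_{j+1})^t\Omega\right)$ is exactly $(\Psi_{j+1}u_{j+1})^t\Psi_{j+1}$, and that $\mathrm{ut}\!\left((\Omega U_j)^t\Omega\right)\,u_{j+1}=(\Omega U_j)^t\Omega u_{j+1}$ because the first $j$ entries of $u_{j+1}$ vanish --- are precisely the two simplifications the paper uses; your packaging via $M$ and $e_{j+1}^t M$ is a mild notational streamlining of the same computation.
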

\begin{proof} See appendix. 

\end{proof}
\begin{proposition}\label{prop:woodburrybis}
With the previous notations, and regardless of the scaling of the modified randomized Householder vectors, we get
\begin{align}\label{eq:woodburryTbis}
    \forall j \in \{1, \hdots , m \}, \quad T_j^{-1} + T_j^{-t} = (\Omega U_j)^t \Omega U_j, \quad  T_j = \left[ \mathrm{ut}\left( (\Omega U_j)^t \Omega U_j\right) - \frac{1}{2} \mathrm{Diag}\left( (\Omega U_j)^t \Omega U_j \right) \right]^{-1}
\end{align}
In particular, when the randomized Householder vectors are scaled such that their sketched norm is $\sqrt{2}$, we get
\begin{align}\label{eq:Tcaledbis}
\forall j \in \{1, \hdots , m\}, \quad T_j = \left[\mathrm{ut} \left( (\Omega U_j)^t \Omega U_j \right) - I_j\right]^{-1}\end{align}

\end{proposition}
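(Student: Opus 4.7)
The plan is to prove the Woodbury-type identity $T_j^{-1} + T_j^{-t} = (\Omega U_j)^t \Omega U_j$ first, by induction on $j$, and then to deduce the closed-form expression for $T_j$ by extracting the upper-triangular part of that identity and exploiting the upper-triangular structure of $T_j$ itself.

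For the base case $j=1$, direct computation gives $T_1 = [\beta_1] = [2/\|\Omega u_1\|^2]$, so $T_1^{-1} + T_1^{-t} = \|\Omega u_1\|^2 = (\Omega U_1)^t \Omega U_1$. For the induction step I would set $v := (\Omega U_j)^t \Omega u_{j+1}$ and start from the block definition of $T_{j+1}$ in~\cref{eq:Tqdefbis}, which has the form $T_{j+1} = \bigl[\begin{smallmatrix} T_j & -\beta_{j+1}T_j v \\ 0 & \beta_{j+1} \end{smallmatrix}\bigr]$. A direct block inversion (solving $T_{j+1}^{-1} T_{j+1} = I_{j+1}$ columnwise) yields $T_{j+1}^{-1} = \bigl[\begin{smallmatrix} T_j^{-1} & v \\ 0 & 1/\beta_{j+1} \end{smallmatrix}\bigr]$, so that $T_{j+1}^{-1} + T_{j+1}^{-t}$ has top-left block $T_j^{-1}+T_j^{-t}$, off-diagonal blocks $v$ and $v^t$, and bottom-right entry $2/\beta_{j+1}$. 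Using the induction hypothesis for the top-left block together with $2/\beta_{j+1} = \|\Omega u_{j+1}\|^2$ for the bottom-right entry, this matches the natural block decomposition of $(\Omega U_{j+1})^t \Omega U_{j+1}$, which closes the induction.

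For the closed-form expression, I would observe that $T_j$ is upper triangular with diagonal $(\beta_1,\ldots,\beta_j)$, so $T_j^{-1}$ is upper triangular with diagonal $(1/\beta_i)_i$, which by the identity just proved equals $\tfrac12\mathrm{Diag}((\Omega U_j)^t \Omega U_j)$; symmetrically, $T_j^{-t}$ is lower triangular with the same diagonal. Taking the upper-triangular part of $T_j^{-1} + T_j^{-t} = (\Omega U_j)^t \Omega U_j$ eliminates the strictly lower contribution of $T_j^{-t}$ and yields $\mathrm{ut}((\Omega U_j)^t \Omega U_j) = T_j^{-1} + \tfrac12 \mathrm{Diag}((\Omega U_j)^t \Omega U_j)$, which rearranges to the stated formula for $T_j$. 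The scaled case $\|\Omega u_i\|^2 = 2$ then reduces $\tfrac12 \mathrm{Diag}((\Omega U_j)^t \Omega U_j)$ to $I_j$, giving~\cref{eq:Tcaledbis} immediately.

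The argument is essentially block-matrix bookkeeping and I anticipate no real obstacle. The only point demanding care is the block inversion, where the off-diagonal block of $T_{j+1}^{-1}$ must come out to exactly $v$ rather than some multiple involving $\beta_{j+1}$; this is precisely what the factor $-\beta_{j+1}$ in the off-diagonal block of~\cref{eq:Tqdefbis} is engineered to ensure.
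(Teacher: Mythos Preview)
Your proposal is correct. The block inversion yielding $T_{j+1}^{-1} = \bigl[\begin{smallmatrix} T_j^{-1} & v \\ 0 & 1/\beta_{j+1} \end{smallmatrix}\bigr]$ is exactly right (the cancellation you flag does occur: $-T_j^{-1}\cdot(-\beta_{j+1}T_j v)\cdot\beta_{j+1}^{-1}=v$), and the extraction of the upper-triangular part to recover $T_j^{-1}$ is clean.

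The paper takes a different and much shorter route here: it simply observes that the recursion~\cref{eq:Tqdefbis} defining $T_j$ is formally identical to the recursion in \Cref{prop:generalcompactwoodburry} with $\Theta$ replaced by $\Omega$, so the identity $(\Omega U_j)^t\Omega U_j = T_j^{-1}+T_j^{-t}$ is inherited verbatim from~\cref{eq:generalwoodburry}. That earlier identity in turn was obtained not by block induction but by applying the Woodbury--Morrison formula to the operator $I_n - U T (\Theta U)^t\Theta$ and its inverse. So the paper's argument is structural (reuse of the general $\Theta$-result), while yours is a self-contained elementary induction on the block structure of $T_j$ that never invokes Woodbury--Morrison or the compact operator form. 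Your approach is arguably more transparent for this statement in isolation; the paper's is more economical given what has already been established in \Cref{section:rhqr}.
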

\begin{proof}
The T factor from~\Cref{prop:woodburrybis} has the same formula as that of the composition of randomized Householder reflectors from~\Cref{section:rhqr}.
\end{proof}
If the right-wise compositions admit the same T factor as for the randomized Householder reflectors from~\cref{section:rhqr}, the left-wise compositions $P(u_m, \Psi_m) \cdots P(u_1, \Psi_1) = H(u_m, \Omega, m) \cdots H(u_1, \Omega, 1)$ feature a T factor that slightly differs from $T^t$. Straightforward computations yield the following result:
\begin{proposition}\label{thm:Qmbis}
    Define by induction the matrix $\tilde{T}_j$ for all $1 \leq j \leq m$ as
    \begin{align}\label{eq:Sqdefbis}
    \begin{split}
        & \tilde{T}_1 \in \R^{1 \times 1}, \; \; \tilde{T}_1 = \left[ \beta_1 \right] \\
        & \forall 1 \leq j \leq m-1, \quad \tilde{t}_j = - \beta \cdot \tilde{T}_{1:j-1,1:j-1} \Bigl( \left[0_{\ell \times (j-1)} \; \; \Omega_{j:n} \right] U_{j-1} \Bigr)^t \Omega u_j \in \R^{(j-1) \times 1} \\
        & \forall 1 \leq j \leq m-1, \; \; \tilde{T}_{j+1} = \left[ \begin{matrix} \tilde{T}_j & \tilde{t}_j \\ 0_{1 \times (j-1)} & \beta_{j+1} \end{matrix} \right]
        \end{split}.
    \end{align}
    Then for all $1 \leq j \leq m$, we have the factored form
    \begin{align} \label{eq:trimrhqrcompactfact}
\mathcal{H}_j = H(u_j, \Omega, j) \cdots H(u_1, \Omega, 1)  = P(u_j, \Psi_j) \cdots P(u_1, \Psi_1) = I - U_{j} \tilde{T}_j^t \mathrm{ut}\left( (\Omega U_{j})^t \Omega \right).
\end{align}
\end{proposition}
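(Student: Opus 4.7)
The plan is to prove this by induction on $j$, mirroring the structure of the proof of Proposition~\ref{thm:Qbis} but keeping careful track of the asymmetry introduced by multiplying new reflectors on the \emph{left} rather than on the right. The base case $j=1$ is immediate: since $\Omega e_1$ is a unit vector, $\Psi_1 = \Omega$ and $H(u_1,\Omega,1) = I - \beta_1 u_1 (\Omega u_1)^t \Omega$, which matches $I - U_1 \tilde{T}_1^t \mathrm{ut}((\Omega U_1)^t \Omega)$ since $\tilde{T}_1 = [\beta_1]$ and $\mathrm{ut}$ acts trivially on a single row.

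For the inductive step, I will first establish the bookkeeping identity
$$\mathrm{ut}\bigl((\Omega U_{j+1})^t \Omega\bigr) = \begin{bmatrix} \mathrm{ut}\bigl((\Omega U_j)^t \Omega\bigr) \\ (\Omega u_{j+1})^t \left[0_{\ell \times j} \; \; \Omega_{j+1:n}\right] \end{bmatrix},$$
which comes from the fact that the $k$-th row of $\mathrm{ut}((\Omega U_j)^t \Omega)$ is $(\Omega u_k)^t \Omega$ with its first $k-1$ entries set to zero, which is exactly $(\Omega u_k)^t [0_{\ell \times (k-1)} \; \Omega_{k:n}]$, and this depends only on $k$, not on $j$. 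Combined with $\tilde{T}_{j+1}^t = \begin{bmatrix} \tilde{T}_j^t & 0 \\ \tilde{t}_j^t & \beta_{j+1} \end{bmatrix}$ and $U_{j+1} = [U_j \; u_{j+1}]$, the target form expands as
$$U_{j+1}\tilde{T}_{j+1}^t\mathrm{ut}\bigl((\Omega U_{j+1})^t\Omega\bigr) = \bigl(U_j\tilde{T}_j^t + u_{j+1}\tilde{t}_j^t\bigr)\mathrm{ut}\bigl((\Omega U_j)^t\Omega\bigr) + \beta_{j+1}u_{j+1}(\Omega u_{j+1})^t\left[0_{\ell \times j}\;\;\Omega_{j+1:n}\right].$$

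On the other hand, from $\mathcal{H}_{j+1} = H(u_{j+1},\Omega,j+1)\,\mathcal{H}_j$ and the induction hypothesis, a direct expansion gives
$$I - \mathcal{H}_{j+1} = U_j\tilde{T}_j^t \mathrm{ut}\bigl((\Omega U_j)^t\Omega\bigr) + \beta_{j+1}u_{j+1}(\Omega u_{j+1})^t\left[0\;\Omega_{j+1:n}\right] - \beta_{j+1}u_{j+1}(\Omega u_{j+1})^t\left[0\;\Omega_{j+1:n}\right] U_j \tilde{T}_j^t \mathrm{ut}\bigl((\Omega U_j)^t\Omega\bigr).$$
Matching the two expressions, the induction goes through precisely when
$$u_{j+1}\tilde{t}_j^t = -\beta_{j+1} u_{j+1}(\Omega u_{j+1})^t\left[0_{\ell \times j}\;\;\Omega_{j+1:n}\right] U_j\tilde{T}_j^t,$$
and transposing yields exactly the definition $\tilde{t}_j = -\beta_{j+1}\tilde{T}_j ([0_{\ell \times j}\;\Omega_{j+1:n}] U_j)^t \Omega u_{j+1}$ given in~\eqref{eq:Sqdefbis} (up to index shift between the statement and the induction step).

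The only genuine subtlety is the recognition that $\mathrm{ut}$ commutes with row augmentation as above, which is what forces the factor $[0_{\ell \times j}\;\Omega_{j+1:n}]$ to appear in both the reflector $H(u_{j+1},\Omega,j+1)$ and in the recursive formula for $\tilde{t}_j$. This is also what makes $\tilde{T}_j$ differ from $T_j^t$: in the right-wise product of Proposition~\ref{thm:Qbis} the newest reflector hits $U_j$ on the outside, whereas here it acts on $U_j\tilde{T}_j^t\mathrm{ut}((\Omega U_j)^t\Omega)$, and the asymmetry of $\mathrm{ut}$ prevents the cancellations that would restore the symmetric relation between left-wise and right-wise T-factors seen in Proposition~\ref{prop:generalcompactwoodburry}.
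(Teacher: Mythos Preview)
Your proof is correct and follows exactly the approach the paper intends: the paper merely states that ``straightforward computations yield the following result,'' and you have supplied those computations by the same induction scheme as in Proposition~\ref{thm:Qbis}, with the new reflector multiplied on the left instead of the right. Your parenthetical remark about the index shift is also apt, since the dimensions in the statement's definition of $\tilde{t}_j$ do not match the block structure of $\tilde{T}_{j+1}$ as written; your interpretation is the natural one.
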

We also get an equivalent formulation for $\tilde{T}_j$ from the Woodburry-Morrison formula:
\begin{proposition}\label{prop:deltawoodburry}
    With the previous notations, and regardless of the scaling of the modified randomized Householder vectors,
    \begin{align} \tilde{T}_j^t = -\left[ \mathrm{ut} \left( (\Omega U_j)^t \Omega U_j \right) - \frac{1}{2} \mathrm{Diag}\left( (\Omega U_j)^t \Omega U_j \right) - \mathrm{ut}\left( (\Omega U_j)^t \Omega \right) U_j  \right]^{-1} = -\left[ T_j^{-1} - \mathrm{ut}\left( (\Omega U_j)^t \Omega \right) U_j \right]^{-1}\end{align}
\end{proposition}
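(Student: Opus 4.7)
The argument is essentially an application of the Sherman--Morrison--Woodbury inversion identity to the compact form established in Proposition~\ref{thm:Qbis}. Let me denote $M_j := \mathrm{ut}\!\left((\Omega U_j)^t \Omega\right) \in \R^{j \times n}$. Then Proposition~\ref{thm:Qbis} reads $\mathcal{H}_j^{-1} = I_n - U_j T_j M_j$ while Proposition~\ref{thm:Qmbis} asserts $\mathcal{H}_j = I_n - U_j \tilde{T}_j^t M_j$. Recall that $T_j$ is upper triangular with diagonal entries $\beta_i = 2/\|\Omega u_i\|^2 \neq 0$, hence invertible.

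First, I would apply the Woodbury identity $(I + XYZ)^{-1} = I - X(Y^{-1} + ZX)^{-1}Z$ with $X = U_j$, $Y = -T_j$, $Z = M_j$ to the expression for $\mathcal{H}_j^{-1}$. Since $\mathcal{H}_j^{-1}$ is invertible as a product of randomized reflectors, the matrix $T_j^{-1} - M_j U_j$ is also invertible, and a short sign manipulation yields
$$\mathcal{H}_j \;=\; I_n + U_j \left( T_j^{-1} - M_j U_j \right)^{-1} M_j.$$
Equating with the expression for $\mathcal{H}_j$ from Proposition~\ref{thm:Qmbis} gives $U_j \tilde{T}_j^t M_j = -U_j (T_j^{-1} - M_j U_j)^{-1} M_j$.

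Second, I would cancel $U_j$ on the left and $M_j$ on the right. The matrix $U_j$ has full column rank because its top $j \times j$ block is lower triangular with nonzero diagonal by construction of the modified Householder vectors, and $M_j$ has full row rank because its top $j \times j$ block is upper triangular with diagonal entries $(\Omega u_i)^t \Omega e_i$, nonzero under the non-degeneracy assumptions in Section~\ref{section:rhqrbis}. This yields the second identity of the proposition,
$$\tilde{T}_j^t \;=\; -\bigl( T_j^{-1} - \mathrm{ut}((\Omega U_j)^t \Omega) \, U_j \bigr)^{-1}.$$
For the first identity, I would invoke Proposition~\ref{prop:woodburrybis} to split $T_j^{-1} + T_j^{-t} = (\Omega U_j)^t \Omega U_j$ into its strictly upper, diagonal, and strictly lower parts: since $T_j^{-1}$ is upper triangular and $T_j^{-t}$ lower triangular with the same diagonal, one obtains $T_j^{-1} = \mathrm{ut}((\Omega U_j)^t \Omega U_j) - \tfrac{1}{2}\mathrm{Diag}((\Omega U_j)^t \Omega U_j)$. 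Substituting this into the second identity delivers the first.

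The delicate point is the cancellation of $U_j$ and $M_j$ from both sides of $U_j X M_j = U_j Y M_j$; everything else is a direct algebraic manipulation involving no induction and no scaling-dependent subtlety, because Woodbury handles signs automatically and all scaling conventions are already absorbed into the coefficients $\beta_i$ encoded in $T_j$.
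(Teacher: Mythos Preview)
Your proposal is correct and follows essentially the same route as the paper: apply the Woodbury identity to the compact form of $\mathcal{H}_j^{-1}$ from Proposition~\ref{thm:Qbis}, equate with the compact form of $\mathcal{H}_j$ from Proposition~\ref{thm:Qmbis}, and then invoke Proposition~\ref{prop:woodburrybis} to rewrite $T_j^{-1}$ explicitly. The paper's proof records the intermediate form $\tilde{T}_j^t=-\bigl[I_j-T_j\,\mathrm{ut}((\Omega U_j)^t\Omega)U_j\bigr]^{-1}T_j$ and then passes through $\Delta_j:=(\Omega U_j)^t\Omega U_j-\mathrm{ut}((\Omega U_j)^t\Omega)U_j$ before using $T_j^{-1}+T_j^{-t}=(\Omega U_j)^t\Omega U_j$, whereas you factor $T_j$ out of the bracket immediately to reach $-(T_j^{-1}-M_jU_j)^{-1}$; these are one-line algebraic rearrangements of the same identity. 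Your write-up is actually more explicit than the paper's about the rank conditions on $U_j$ and $M_j$ needed to cancel them, a point the paper leaves implicit.
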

\begin{proof} See appendix.
\end{proof}

All these formulas allow to identify the compact factorizations
\begin{align} \label{eq:trimrhqr_finalform}
W = \Bigl( I_n - U_m T_m \Triu \Bigr) \cdot \begin{bmatrix} R \\ 0_{(n-m) \times m} \end{bmatrix} \iff \Bigl( I_n - U_m \tilde{T}_m^t \Triu \Bigr) \cdot W = \begin{bmatrix} R \\ 0_{(n-m) \times m} \end{bmatrix}
\end{align}
and to derive the left-looking version of trimRHQR, given in~\Cref{algo:rhqrbis_leftlooking}. We make the same memory management indications as in~\Cref{algo:rhqr_leftlooking}. If needed, the output can be used to produce the thin factor $Q_m = \mathcal{H}_m^{-1} I_{1:m}$. As to~\Cref{line:applytriubis}, the main challenge is the application of $\mathrm{ut} \left( (\Omega U_j)^t \Omega \right)$, which can be done in several ways. For example, one can first compute $(\Omega U_j)^t \Omega x$, then substract the vector
$$ \left[ \begin{matrix} 0 \\ \langle \Omega u_2, x_1 \Omega_{1:1} \rangle \\ \langle \Omega u_3, \Omega_{1:2} x_{1:2} \rangle \\ \vdots \\ \langle \Omega u_j, \Omega_{1:j} x_{1:j} \rangle \end{matrix} \right].$$
If $\Omega$ is not stored as a dense matrix but can only be applied to a vector (e.g SRHT OSE), one can also compute explicitly along the iterations
\begin{align}\label{eq:Lmformulabis}
\mathrm{slt} \left( (\Omega U_j)^t \Omega \right) = \left[ \begin{matrix} 
0 & 0 & 0 & \hdots & & \hdots & 0 \\
\langle \Omega u_2, \Omega e_1 \rangle & 0 & 0 & \hdots & & \hdots & 0 \\
\langle \Omega u_3, \Omega e_1 \rangle & \langle \Omega u_3, \Omega e_2 \rangle & 0 & \hdots & & & \\
\vdots & & \ddots & & & & & \\
\langle \Omega u_j, \Omega e_1 \rangle & \langle \Omega u_j, \Omega e_2 \rangle & \hdots & \langle \Omega u_j, \Omega e_{j-1} \rangle & 0 & \hdots & 0  \end{matrix} \right].
\end{align}
Then, the computation of $\mathrm{ut} \left( (\Omega U_j)^t \Omega \right) x$ is replaced by that of $(\Omega U_j)^t \Omega x$ followed by the substraction of $\mathrm{slt} \left( ( \Omega U_j)^t \Omega \right) \, x$. The same approach can be used to compute the update of $\tilde{T}_j$ when using~\Cref{eq:Sqdefbis}.

\begin{algorithm}
\caption{trimRHQR (left-looking)}\label{algo:rhqrbis_leftlooking}
\KwInput{Matrix $W \in \mathbb{R}^{n\times m}$, matrix $\Omega \in \mathbb{R}^{\ell \times n}$, $m < l \ll n$}
\KwOutput{$R, U_m, \Omega U_m, T_m, \tilde{T}_m$ such that~\cref{eq:trimrhqr_finalform} holds}
\SetKwFunction{Rh}{RHVector}
\SetKwFunction{Rhqrbl}{trimRHQR\_left}
\SetKwProg{Fn}{function}{:}{}
\Fn{ \Rhqrbl{$W$, $\Omega$}}{
\For{$j = 1:m$}{
$w \gets w_j$ \\
\If{$j \geq 2$}{
Sketch $\Omega w$, compute $\mathrm{ut}\left( (\Omega U_{j-1})^t \Omega \right) \cdot w$ \quad \# e.g using~\cref{eq:Lmformulabis}\\
$w \gets w - U_{j-1} \tilde{T}_{j-1}^t \mathrm{ut}\left( (\Omega U_{j-1})^t \Omega  \right) \cdot w$\label{line:applytriubis} \\
}
$v, s_j, \rho, \sigma, \beta = $ \Rhb{$(w)_{j:n}$, $\Omega$}\\
$u_j \gets 0_{j-1} \oslash v$ \\
$r_j \gets (w)_{1:j-1} \oslash (-\sigma \rho) \oslash 0_{m-j}$ \\
$t_j \gets 0_{j-1} \oslash \beta \oslash 0_{m-j}$ \\
$\tilde{t}_j \gets 0_{j-1} \oslash \beta \oslash 0_{m-j}$ \\
\If{$j \geq 2$}{
$(t_j)_{1:j-1} \gets - \beta  \cdot T_{1:j-1,1:j-1} S_j^t s_j$ \\
$(\tilde{t}_j)_{1:j-1} \gets -\beta  \cdot \tilde{T}_{1:j-1,1:j-1} \Bigl( \left[0_{\ell \times (j-1)} \; \; \Omega_{j:n} \right] U_{j-1} \Bigr)^t s_j$ \\
}
}
\KwRet $R = (r_j)_{1 \leq j \leq m}, \; U = (u_j)_{1 \leq j \leq m}, \; S = (s_j)_{1 \leq j \leq m}, \; T = (t_j)_{1 \leq j \leq m}$
}
\end{algorithm}

We note that a block version of~\Cref{algo:rhqrbis_leftlooking} can be straightforwardly derived as for~\Cref{algo:rhqr_leftlooking,algo:brhqr}.

\section{Numerical experiments}\label{section:experiments}
In this section, we test numerically the algorithms derived in this work. We first detail the matrices of our test set. We then compare left-looking RHQR to RGS on a difficult example where RHQR is the most stable, both in unique (simple and double) precision and mixed (half and double) precision settings. We then compare recRHQR and Randomized Cholesky QR (both algorithms perform a single synchronization) on the same difficult example, where reconstrcutRHQR is the most stable. Next, we compare RGS-GMRES and RHQR-GMRES on medium and high difficulty problems, and the results are consistent with those of the QR factorization. We then compare BLAS2-RGS with RGS on the same difficult example, and observe a slight advantage for BLAS2-RGS. We finally showcase the performance of trimRHQR on the same difficult examples, and observe that trimRHQR is as stable as RHQR. Furthermore, with sampling size inferior to the column dimension of $W$, it is stabler than RGS with sampling size superior to the column dimension of $W$.

The first example is formed by uniformly discretized parametric functions, also used in~\cite{rgs}, that we denote $C_m \in \R^{50000 \times m}$. For all floating (and thus rational) numbers $0 \leq x, \mu \leq 1$, the function is defined as
$$f(x,\mu) = \frac{ \sin \left( 10 (\mu + x) \right)}{ \cos{ \left( 100 (\mu -x) \right) } + 1.1 } $$
and the associated matrix is
\begin{align}
\label{syntheticMatrix}
C_m \in \R^{n \times m}, \; \; C_{i,j} = f\left( \frac{i-1}{n-1}, \frac{j-1}{m-1} \right), \quad i\in \{1 \hdots n\}, \quad j \in \{1, \hdots , m\}.\end{align}
The condition number of $C_{1500}$ in double precision is displayed in~\Cref{fig:conds64W}, and that of $C_{600}$ in single precision in~\Cref{fig:conds32W}.

For all experiments, the $\epsilon$-embedding property of drawn matrix $\Omega$ is tested by first computing the LAPACK's pivoted QR factorization of $W$. If the factorization is accurate to machine precision, we check that the condition number of the sketch of the Q factor is indeed $1 + \mathcal{O}(\epsilon)$. The least-squares problem in RGS's inner loop is always computed using LAPACK's pivoted QR, at each iteration.

\begin{table}[h]
\centering
\begin{tabular}{ |p{3.3cm}||p{2.5cm}|p{1.3cm}|p{8.1cm}|}
\hline
 Name & Size & Cond. & Origin \\
 \hline
 $C_m$   & $50000 \times m$  & $\infty$ & synthetic functions  \\
 \textit{SiO2} & $155331$ & $\approx 2300$ & quantum chemistry problem \\
 \textit{El3D} & $32663$ & $\approx 10^{28}$ & Near incompressible regime elasticity problem \\ 
 \hline
\end{tabular}
 \caption{Set of matrices used in experiments}\label{table}
 \end{table}
 
\Cref{fig:perf64} displays the accuracy obtained in double precision for both RHQR (\Cref{algo:rhqr_rightlooking,algo:rhqr_leftlooking}) and RGS (\cite[Algorithm 2]{rgs}) , in terms of condition number of the computed basis and the accuracy of the obtained factorization.  \Cref{fig:conds64W} shows the condition number of $W$ as the number of its vectors increases from $1$ to $1500$. The condition number grows exponentially and the matrix becomes numerically singular when approximately the $500$-th vector is reached. \Cref{fig:conds64} displays in red (and circle points) the condition number of the basis and the sketched basis obtained through RHQR from \Cref{algo:rhqr_leftlooking}, and in blue (and triangle points) those obtained by RGS for reference. We observe that for similar sampling size, RGS and RHQR have similar performance. However, around the $500$-th vector (which coincides with the moment the matrix becomes numerically singular), we see that RGS experiences some instabilities, while the behavior of RHQR does not change. We observe that the sketched basis of RHQR (a posteriori sketch of the output thin Q factor) remains numerically orthogonal. \Cref{fig:errors64} shows the relative Frobenius error of the QR factorization obtained by RHQR (in red) and RGS (in blue). Both algorithms produce accurate factorizations, with a slight advantage for RGS, but that becomes negligible when $m$ grows. 

\begin{figure}
  \begin{subfigure}[t]{.33\textwidth}
    \centering
    \includegraphics[width=\linewidth]{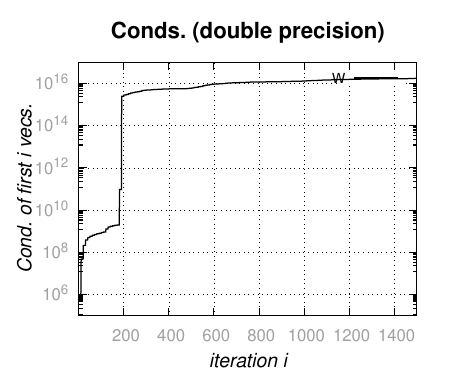}
    \caption{Condition number of test matrix $W$}
    \label{fig:conds64W}
  \end{subfigure}
  \begin{subfigure}[t]{.33\textwidth}
    \centering
    \includegraphics[width=\linewidth]{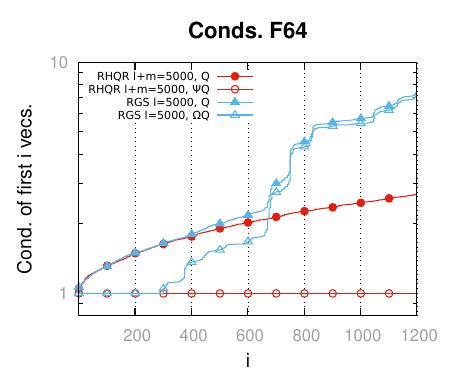}
    \caption{Condition number of bases output by RHQR and RGS}
    \label{fig:conds64}
  \end{subfigure}
  \begin{subfigure}[t]{.33\textwidth}
    \centering
    \includegraphics[width=\linewidth]{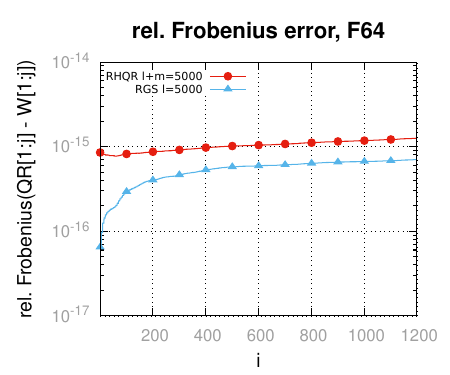}
    \caption{Factorization relative Frobenius error for RHQR and RGS}
    \label{fig:errors64}
  \end{subfigure}
  \caption{Randomized Householder QR in double precision (RHVector in the graphs) and comparison with RGS.}
  \label{fig:perf64}
\end{figure}

\Cref{fig:perf32} showcases the same experiments but in single precision. As to the comparison between RHQR and RGS, we make the same observations as in~\Cref{fig:perf64}, and we stress that the sketch of the RHQR basis remains numerically orthogonal. We also display the results obtained by Classical Gram-Schmidt (CGS), Modified Gram-Schmidt (MGS) and Householder QR (HQR). We can see that the basis computed by CGS is ill-conditioned very early in the iterations, yet the factorization remains accurate. The performance of RGS is similar to that of MGS, as pointed out in~\cite{rgs}. Finally, the basis produced by Householder QR is numerically orthogonal, just as the sketched basis output by RHQR, with an accurate factorization.

\begin{figure}
  \begin{subfigure}[t]{.33\textwidth}
    \centering
    \includegraphics[width=\linewidth]{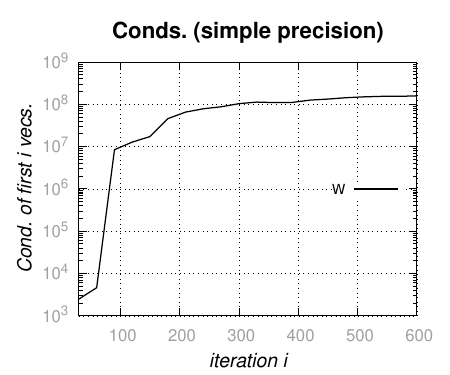}
    \caption{Condition number of test matrix $W$}
    \label{fig:conds32W}
  \end{subfigure}
  \begin{subfigure}[t]{.33\textwidth}
    \centering
    \includegraphics[width=\linewidth]{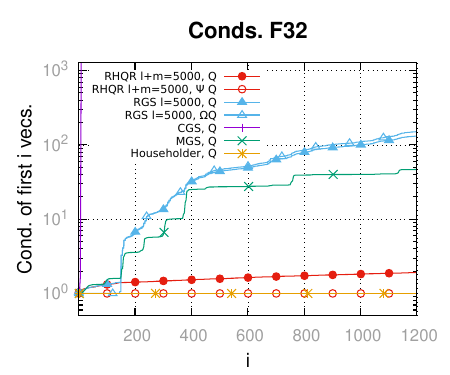}
    \caption{Condition number of bases output by RHQR and RGS}
    \label{fig:conds32}
  \end{subfigure}
  \begin{subfigure}[t]{.33\textwidth}
    \centering
    \includegraphics[width=\linewidth]{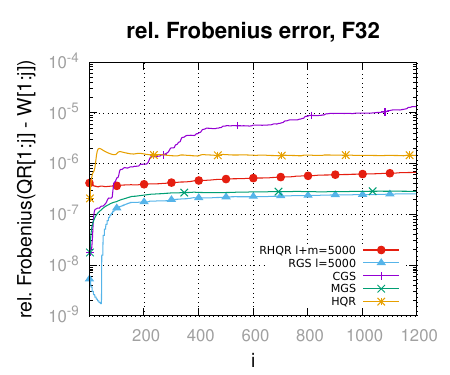}
    \caption{Factorization relative Frobenius error for RHQR and RGS}
    \label{fig:errors32}
  \end{subfigure}
  \caption{Randomized Householder QR in single precision (RHouse in the graphs) and comparison with RGS.}
  \label{fig:perf32}
\end{figure}

\Cref{fig:perfmixed} showcases the same experiment but in mixed precision. The input matrix $W = C_{1200}$ is given in half precision. All sketching operations are computed in half precision. The computation of $\rho_j$ in~\Cref{algo:rhousevector} (RHVector) is then done in double precision. It is integrated to the sketch $s_j$ in double precision, and to the randomized Householder vector $u_j$ in half precision. Then in~\Cref{algo:rhqr_leftlooking}, the matrix $T_j$ is computed in double precision. In turn, the factor $T_{j-1}^t S_{j-1}^t z$ in~\Cref{line:refreshwj} is computed in double precision, then converted in half precision to perform the update. We apply the same method to RGS: the input matrix is given in half precision, the sketching is made in half precision, the sketches are then converted in double precision. The least squares problem is computed in double precision. Its output is then converted back to half precision to perform the vector update. The result of the update is sketched again and converted to double precision. The norm of the sketch is computed in double precision, and the scaling of the obtained basis vector is performed in half precision. In~\cite{rgs}, authors mixed single and double precision, whereas we mix half and double precision on a more difficult matrix, and we sketch in half precision. We make the same observations as in~\Cref{fig:perf64,fig:perf32}, and stress again that the sketch of the basis output by RHQR remains numerically orthogonal.

\begin{figure}
\centering
  \begin{subfigure}[t]{.33\textwidth}
    \centering
    \includegraphics[width=\linewidth]{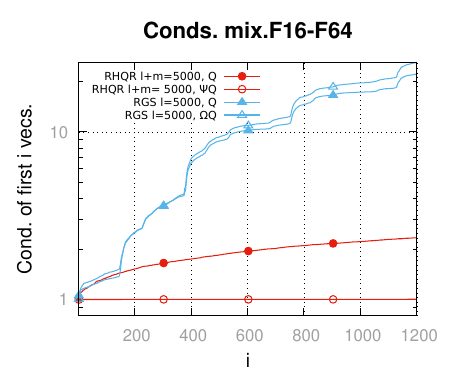}
    \caption{Condition number of bases output by RHQR and RGS}
    \label{fig:condsmixed}
  \end{subfigure}
  \begin{subfigure}[t]{.33\textwidth}
    \centering
    \includegraphics[width=\linewidth]{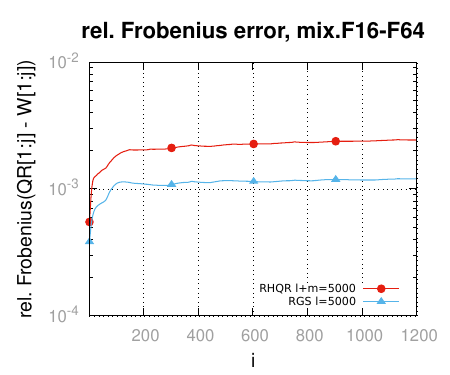}
    \caption{Factorization relative Frobenius error for RHQR and RGS}
    \label{fig:errorsmixed}
  \end{subfigure}
  \caption{Randomized Householder QR compared to RGS in mixed half/double precision.}
  \label{fig:perfmixed}
\end{figure}

\Cref{fig:reconstruct32} displays the performance of recRHQR in single precision on the matrix $C_{1200}$, and is compared to Randomized Cholesky QR since both algorithms perform a single synchronization. As detailed before, the recRHQR is solved with a backward solve using the scaling coefficients of the Householder vectors of $\Psi W$. The Randomized Cholesky QR is performed with a backward solve of the R factor retrieved from the QR factorization of the sketch. In~\Cref{fig:reconstruct32conds}, we see that both bases output by recRHQR and Randomized Cholesky QR lose their sketch orthogonality, but the former has a much more favorable trajectory than the latter. In~\Cref{fig:reconstruct32errs}, we see that both algorithms compute an accurate factorization, with a slight advantage for Randomized Cholesky QR.   

\begin{figure}
\centering
  \begin{subfigure}[t]{.33\textwidth}
    \centering
    \includegraphics[width=\linewidth]{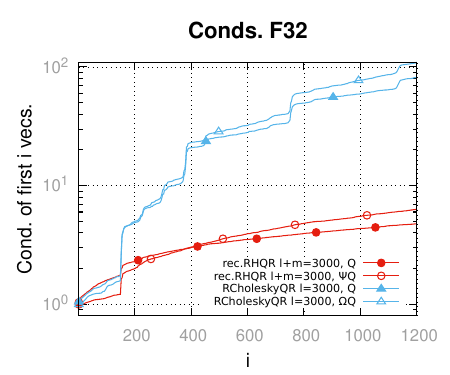}
    \caption{Condition number}
    \label{fig:reconstruct32conds}
  \end{subfigure}
  \begin{subfigure}[t]{.33\textwidth}
    \centering
    \includegraphics[width=\linewidth]{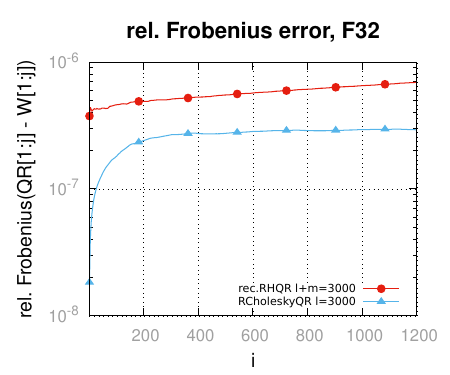}
    \caption{Accuracy of the factorization}
    \label{fig:reconstruct32errs}
  \end{subfigure}
  \caption{Performance of recRHQR compared to Randomized Cholesky QR}
  \label{fig:reconstruct32}
\end{figure}

\Cref{fig:gmres32} displays the performance of RHQR-GMRES (\Cref{algo:rhGMRES}) in single precision on the matrix \textit{SiO2} from~\cite{suitesparse}, compared to that of RGS-GMRES (based on \cite[Algorithm 3]{rgs}). The behavior of the two solvers is very similar. In~\Cref{fig:gmres32conds}, we display the condition number of $Q, \Psi Q$ (built by RHQR) and of $Q, \Omega Q$ (built by RGS). We see that both algorithms are able to produce a numerically orthogonal sketched Arnoldi basis. The condition number of the basis built by RHQR is slightly better than that built by RGS in the later iterations. In~\Cref{fig:gmres32errs}, we show the accuracy of the Arnoldi factorization $A Q_j = Q_{j+1} H_{j+1,j}$ computed by RHQR and RGS. As in the QR factorization of $W$, we see a slight advantage for RGS. Finally, in~\Cref{fig:gmres32res}, we showcase the convergence of GMRES solvers with RHQR and RGS variations. In the early iterations, both solvers are indistinguishable. Before the residual stagnation, we see RHQR performing slightly better than RGS. When residual stagnation is reached, we see that RGS has a slight advantage over RHQR. On the more difficult \textit{El3D} matrix, we observe the same results as in~\Cref{fig:perf64,fig:perf32,fig:perfmixed}, namely the numerical orthogonality of the sketched Arnoldi basis output by RHQR, the loss of orthogonality of that output by RGS, the accuracy of both factorizations with a slight advantage for RGS.

\begin{figure}
  \begin{subfigure}[t]{.33\textwidth}
    \centering
    \includegraphics[width=\linewidth]{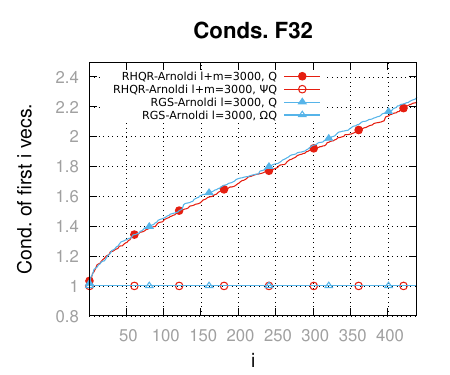}
    \caption{Condition number Arnoldi basis}
    \label{fig:gmres32conds}
  \end{subfigure}
  \begin{subfigure}[t]{.33\textwidth}
    \centering
    \includegraphics[width=\linewidth]{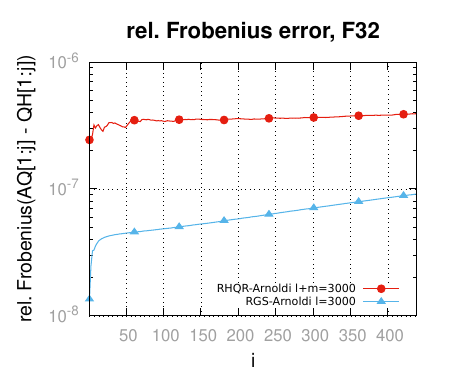}
    \caption{Accuracy of the Arnoldi relation}
    \label{fig:gmres32errs}
  \end{subfigure}
  \begin{subfigure}[t]{.33\textwidth}
    \centering
    \includegraphics[width=\linewidth]{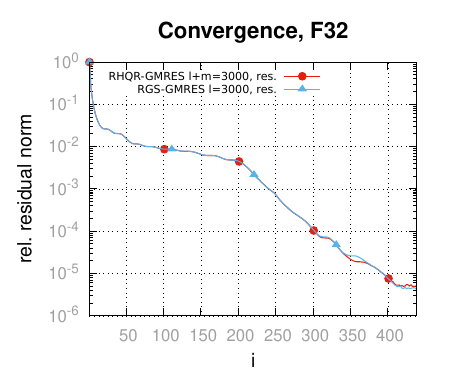}
    \caption{Convergence of GMRES}
    \label{fig:gmres32res}
  \end{subfigure}
  \caption{Performance of RHQR-GMRES compared to that of RGS-GMRES}
  \label{fig:gmres32}
\end{figure}

\Cref{fig:rmgs32} displays the performance of BLAS2-RGS (\Cref{algo:blas2rgs}) in single precision, on the matrix $C_{1200}$. As pointed out in~\Cref{section:rmgs}, the truly orthogonal matrix is $\left[I - T, \; \Omega Q T \right]$, which in exact arithmetics is $\left[0_{m \times m}; \; \Omega Q \right]$, i.e $\Omega Q$ would be orthogonal in exact arithmetics. In~\Cref{fig:rmgs32conds} we showcase (in the order given by the plot's legend) the condition numbers of $Q$, $\Omega Q$ output by BLAS2-RGS (i.e without any correction from $T$); the condition numbers of $Q T$ and $\Omega Q T$ output by BLAS2-RGS (a partial correction from $T$), the condition numbers of $\left[I - T; \; Q T \right]$ and $\left[I-T; \; \Omega Q T \right]$ output by BLAS2-RGS (full correction from $T$), and finally the comparison with the bases $Q$ and $\Omega Q$ built by RGS. We see that the sketch of the basis built by BLAS2-RGS with its full correction is numerically orthogonal. The basis built by BLAS2-RGS, both with partial correction and without correction, suffers some instabilities when the matrix $W$ becomes numerically singular, just like RGS. However, both bases built by BLAS2-RGS are better conditioned than that built by RGS. In~\Cref{fig:rmgs32errs}, we display the relative factorization error for the BLAS2-RGS basis without correction, with partial correction, and we compare it to that of RGS. We see an interesting feature: while the basis with partial correction is the better conditioned of the three, the factorization is slightly les accurate than the one without correction. The factorization without correction is as accurate as that computed by RGS.

\begin{figure}
\centering
  \begin{subfigure}[t]{.33\textwidth}
    \centering
    \includegraphics[width=\linewidth]{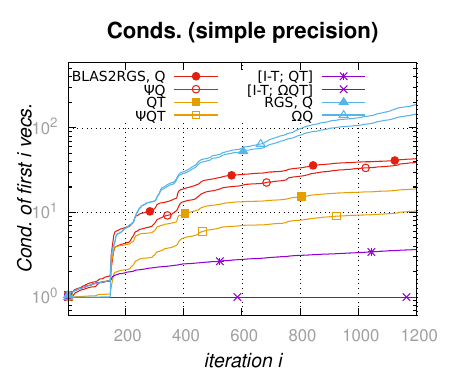}
    \caption{Condition number}
    \label{fig:rmgs32conds}
  \end{subfigure}
  \begin{subfigure}[t]{.33\textwidth}
    \centering
    \includegraphics[width=\linewidth]{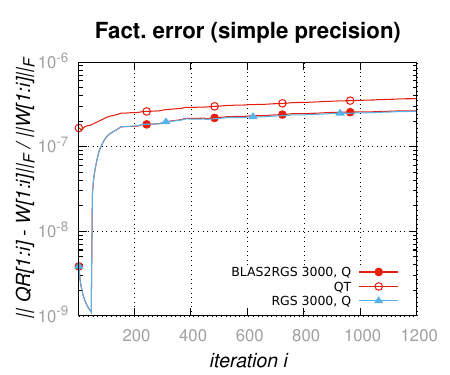}
    \caption{Accuracy of the factorization}
    \label{fig:rmgs32errs}
  \end{subfigure}
  \caption{Performance of BLAS2-RGS compared to RGS}
  \label{fig:rmgs32}
\end{figure}

\Cref{fig:perf64bis} displays the accuracy obtained in double precision for both trimRHQR (\Cref{algo:rhqrbis_rightlooking,algo:rhqrbis_leftlooking}) and RGS, in terms of condition number of the computed basis and accuracy of the factorization, on the set of synthetic functions. \Cref{fig:conds64bis} displays in red (and circle points) the condition number of the basis obtained through RHQR from \Cref{algo:rhqrbis_leftlooking}, and in blue (and triangle points) that obtained by RGS for reference. In this experiment, the linear least squares problem of RGS is solved as before with LAPACK pivoted QR. We make the same global observations as in~\Cref{fig:perf64}. We note that the condition number of the basis output by trimRHQR is, in its stable regime, higher than that of both RHQR and RGS. Most importantly, we see that trimRHQR with a sampling size $\ell = 1000 < 1500 = m$ is stable, and stabler than RGS with a sampling size of $1550 > m$. We make the same observation in finite precision in~\Cref{fig:perf32bis}. 
\begin{figure}
\centering
  \begin{subfigure}[t]{.33\textwidth}
    \centering
    \includegraphics[width=\linewidth]{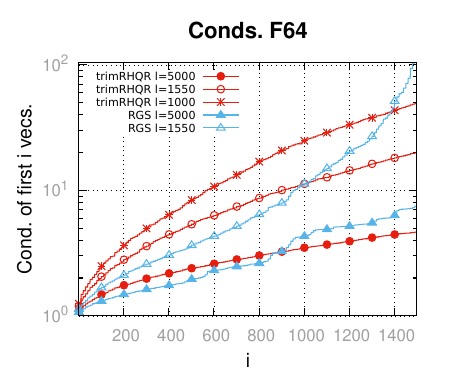}
    \caption{Condition number of the bases output by trimRHQR and RGS}
    \label{fig:conds64bis}
  \end{subfigure}
  \begin{subfigure}[t]{.33\textwidth}
    \centering
    \includegraphics[width=\linewidth]{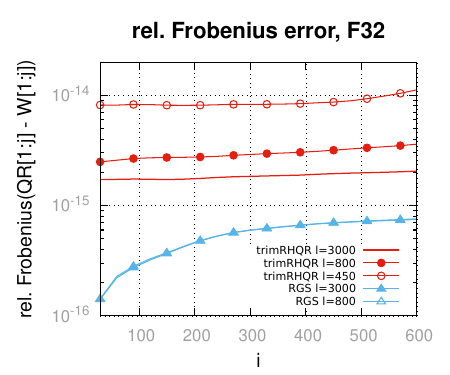}
    \caption{Factorization error of trimRHQR and RGS}
    \label{fig:errors64bis}
  \end{subfigure}
  \caption{Performance of trimRHQR and RGS in double precision}
  \label{fig:perf64bis}
\end{figure}

\Cref{fig:perf32bis} showcases the performance of~\Cref{algo:rhqrbis_leftlooking} in single precision. We make the same observations as in double precision (remark that trimRHQR with sampling size $\ell = 300 < 600$ outperforms RGS with $\ell = 800$).

\begin{figure}
\centering
  \begin{subfigure}[t]{.33\textwidth}
    \centering
    \includegraphics[width=\linewidth]{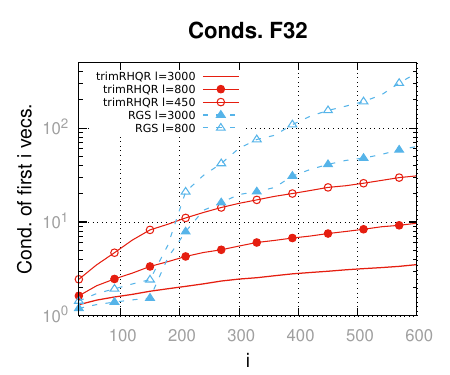}
    \caption{Condition number of the bases computed by trimRHQR and RGS}
    \label{fig:conds32bis}
  \end{subfigure}
  \begin{subfigure}[t]{.33\textwidth}
    \centering
    \includegraphics[width=\linewidth]{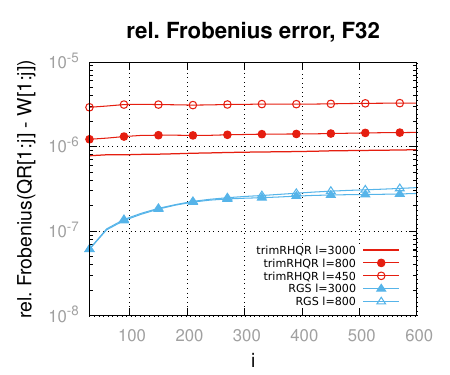}
    \caption{Factorization error of trimRHQR and RGS}
    \label{fig:errors32bis}
  \end{subfigure}
  \caption{Performance of trimRHQR and RGS in single precision}
  \label{fig:perf32bis}
\end{figure}

\section{Acknowledgements}
This project has received funding from the European Research
Council (ERC) under the European Union’s Horizon 2020 research and innovation program (grant agreement No 810367).

\printbibliography

\appendix

\section*{Appendix : Detail of proofs }\label{section:appendix}
\textbf{Proof of~\Cref{lemma:accuracyrhvector}} :
Denote $k_1 = k + \ell/2 + 1$
Let us denote $a$ the first entry of $w$, and $\rho = \| \Psi w \|$. According to previous derivations, we get
$$\widehat{\rho} = (1+\theta_{k_1}) \rho, \quad \fl (a - \rho) = (1+\theta_{k_1+1})(a - \rho)$$ 
$$\fl \sqrt{\rho (a - \rho)} = (1+\delta) \sqrt{ 1 + \theta_{2k_1+2}} \sqrt{ \rho (a - \rho)} = (1+\delta)(1+\theta_{k_1+1}) \sqrt{\rho(a-\rho)} = (1+\theta_{k_1+2}) \sqrt{ \rho (a-\rho)}$$
Let us denote $z = w - \| \Psi w \| e_1$ and $\beta_z = 2/\| \Psi w \|^2 = 2/\rho^2$.
$$\| \lambda \Psi z \|^2 = 2 \iff \lambda^2 = \frac{2}{\| \Psi z \|^2} = \beta_z \iff \lambda = \pm \beta_z,$$
hence if we want to output $u = \lambda z$ such that $\| \Psi u \|^2 = 2$, we should multiply $z$ by $\sqrt{\beta_z}$. Straightforward computations show that
$$\beta_z = \frac{1}{\rho(\rho-a)}, \quad \sqrt{\beta}_z = \frac{1}{\sqrt{\rho(\rho-a)}}.$$
We can compute the inverse of $\rho(a-\rho)$ and then take the square root, yielding $\widehat{\sqrt{\beta_z}} = (1+\theta_{2k_1+2}) \sqrt{\beta_z}$. However it is slightly more accurate to divide the entries of $z$ by $\sqrt{\rho(\rho-a)}$. Let us now denote $b \in \R^{m-1}$ the entries $2$ to $m$ of $w$, and $c \in \R^{n-m}$ the last $n-m$ entries of $w$. In memory, we have these two vectors:
$$\widehat{z} = \begin{bmatrix} (1+\theta_{k_1+1}) (a - \rho) \\ b \\ c \end{bmatrix}, \quad \fl \Psi z = \begin{bmatrix} (1+\theta_{k_1+1}) (a-\rho) \\ b \\ \Omega c + \Delta d \end{bmatrix}, \quad \| \Delta d \| \leq \gamma_k \| \Omega c \|$$
Let us then scale them by dividing their entries by $\sqrt{\rho(\rho-a)}$. As to their first entry, the most accurate computation we can do is the following:
$$\sqrt{\beta_z}(a-\rho) = \sqrt{\frac{a}{\rho} - 1}.$$
Accounting for the computations of $a/\rho$, the substraction of $1$, and the square root, we get:
$$ \fl \sqrt{\frac{a}{\rho} - 1} = (1+\theta_{k_1+2}) \sqrt{\frac{a}{\rho}-1}$$
(we just avoided a final error on the first entry of the order of $3k_1$). Then we can straightforwardly divide all the other entries by $\sqrt{\rho(\rho-a)}$, yielding:
$$\widehat{u} = \begin{bmatrix} (1+\theta_{k_1+2}) \sqrt{\beta_z} (a-\rho) \\ \mathrm{Diag}(1+\theta_{2k_1+5}) \sqrt{\beta_z} b \\ \mathrm{Diag}(1+\theta_{2k_1+5}) \sqrt{\beta_z} c \end{bmatrix},$$
hence
$$| u - \widehat{u} | \leq \gamma_{2k_1+5} |u| = \gamma_{2k + \ell + 7} |u| \quad (\text{scaling } \|s\|^2 = 2).$$
As for the sketch, only the last $n-m$ coordinates differ (by design of $\Psi$), and we obtain:
$$ \begin{bmatrix} (1+\theta_{k_1+2}) \sqrt{\beta_z} (a-\rho) \\ \mathrm{Diag}(1+\theta_{2k_1+5}) \sqrt{\beta_z} b \\ \mathrm{Diag}(1+\theta_{2k_1+5})\sqrt{\beta_z} (\Omega c + \Delta d) \end{bmatrix}.$$
The final error on the last $\ell$ entries of $ \fl \Psi u$ is:
$$\sqrt{\beta_z} \Omega c + \sqrt{\beta_z} \cdot \Delta d + \sqrt{\beta_z} \cdot \mathrm{Diag}(\theta_{2k_1+5})(\Omega c + \Delta d),$$
whose norm is bounded by
$$\sqrt{\beta_z} \| \Omega c \| \cdot \left[ \gamma_k + \gamma_{2k_1+5}(1+\gamma_k) \right] \leq \gamma_{2k_1 + k + 5} \sqrt{\beta_z} \| \Omega c \|,$$
hence
$$ \|s - \widehat{s} \| = \| \Psi u - \fl \Psi u \| \leq \gamma_{2k_1 + k + 5} \| \Psi u \|.$$
Let us now detail the scaling in precision $\f$. The computations on $\Psi u$ and on the first $m$ coordinates of $u$ are unchanged. As proposed above, we may compute $\rho(a-\rho)$ and take its square root, then cast $\sqrt{\beta_z}$ to precision $\f$. Supposing that $(2k_1+2) \e \leq \f$, we get $\widehat{\sqrt{\beta_z}} = (1+\zeta) \sqrt{\beta_z}$ (see~\eqref{eq:notationchi}), and accounting for the scaling error we get
$$\widehat{u} = \begin{bmatrix} (1+\theta_{k_1+2}) \sqrt{\beta_z} (a - \rho) \\ \mathrm{Diag}(1+\theta_{2k_1+5}) \sqrt{\beta_z} b \\ \mathrm{Diag}(1+\eta_2) \sqrt{\beta_z} c \end{bmatrix}$$
yielding
$$| \widehat{u} - u | \leq \chi_2 \|u\|.$$
Let us move on to the second scaling, where $\langle u, e_1 \rangle = 1$. We might as well write $1$ directly to the first entry of the output. As to the other coordinates of $z$, we simply divide them all by $\fl(a - \rho)$, yielding
$$\widehat{u} = \begin{bmatrix} 1 \\ \mathrm{Diag}(1+\theta_{2k_1+3}) b / (a-\rho) \\ \mathrm{Diag}(1+\theta_{2k_1+3}) c / (a-\rho) \end{bmatrix}, \quad \fl \Psi u = \begin{bmatrix} 1 \\ \mathrm{Diag}(1+\theta_{2k_1+3}) b / (a-\rho) \\ \mathrm{Diag}(1 + \theta_{2k_1+3}) (\Omega c + \Delta d)/(a-\rho) \end{bmatrix},$$
thus $|\widehat{u}-u| \leq \gamma_{2k_1+3} |u|$. The final error in the last $n-m$ coordinates of $\fl \Psi u$ writes:
$$ \frac{1}{a - \rho} \cdot \Delta d + \mathrm{Diag}(\theta_{2k_1+3}) \frac{1}{a - \rho} \cdot (\Omega c + \Delta d),$$
whose norm is bounded by
$$\Bigl\| \Omega \Bigl( \frac{c}{a-\rho} \Bigr) \Bigr\| \cdot  \left[\gamma_k + \gamma_{2k_1+3} (1+\gamma_k) \right] \leq \gamma_{2k_1+k+3} \cdot \Bigl\| \Omega \Bigl( \frac{c}{a-\rho} \Bigr) \Bigr\|,$$
hence $ \| \Psi u - \fl \Psi u \| \leq \gamma_{2k_1+k+3} \| \Psi u \|$, showing that both scalings are equally accurate in practice. Again, in mixed precision, only the last $n-m$ coordinates of $\widehat{u}$ are changed. We cast $(a - \rho)^{-1}$ to simple precision, and supposing that $(2k_1 + 3) \e \leq \f$, we get similar $\| \widehat{u} - u \| \leq \chi_2 \| u \|$. In this scaling, we have to return a $\beta_u := 2/\| \Psi u \|^2$, slightly different from $1$. Since $u = (a - \rho)^{-1} z$ and $\| \Psi z \|^2 = 2 \rho(\rho-a)$, we get, in exact arithmetics,
$$\| \Psi u \|^2 = 2 \frac{\rho}{\rho-a} \implies \beta_u := \frac{2}{\| \Psi u \|^2} = \frac{\rho - a}{\rho} = 1 - \frac{a}{\rho},$$
that we might compute directly from $\widehat{\rho}$ and $a$:
$$\fl \frac{a}{\rho} = \frac{1+\delta}{1+\theta_{k_1}} \frac{a}{\rho} = (1+\theta_{2k_1+1}) \frac{a}{\rho},$$
and finally
$$\widehat{\beta}_u = (1+\theta_{2k_1+2}) \beta_u.$$

\textbf{Proof of~\Cref{lemma:accuracyapplicationreflector} :}
    We first denote 
    $$ \alpha = \langle \Psi x, \Psi u \rangle, \quad \widehat{\alpha} = \fl \Bigl( \langle \widehat{\Psi x}, \widehat{s} \rangle \Bigr) =: \langle \Psi x, \Psi u \rangle + \Delta \alpha$$ 
    Accounting also for the error made by the computation of the inner product of $\R^{\ell}$, we get
    $$\widehat{\alpha} = \langle \Psi u + \Delta_1 z, \Psi x + \Delta_2 z + \Delta_3 z \rangle, \quad \begin{dcases} \| \Delta_1 z \| \leq \sqrt{2} \cdot \gamma_{3k + \ell + 7} \\ \| \Delta_2 z \| \leq \gamma_k \| \Psi x \| \\ \| \Delta_3 z \| \leq \gamma_{\ell} (1 + \gamma_k) \| \Psi x \| \end{dcases}$$
    hence 
    $$| \Delta \alpha | \leq \gamma_{3k + \ell + 7 + \ell + k} \sqrt{2} \| \Psi x \| \leq \gamma_{4k + 2\ell + 7} \sqrt{2} \| \Psi x \|.$$
    Let us, for the remaining of the proof, denote $k_2 = 4k + 2\ell+7$. We now compute, in the unique precision setting,
    $$\fl ( \widehat{\alpha} \widehat{u}) = \mathrm{Diag}(1+\delta)(\alpha + \Delta \alpha)(u + \Delta u) := \alpha u + \Delta s$$
    We begin by addressing $\widehat{\alpha} \widehat{u} - \alpha u$:
    \begin{align*} | \widehat{\alpha} \widehat{u} - \alpha u | & \leq \underbrace{\sqrt{2}\|\Psi x \| \cdot \|u\| \gamma_{2k + \ell + 7}}_{\alpha \cdot \Delta u} + \underbrace{\sqrt{2}\|\Psi x\| \cdot \|u\| \gamma_{k_2}}_{\Delta \alpha \cdot u} + \underbrace{\sqrt{2} \|\Psi x \| \cdot \|u\| \gamma_{k_2} \gamma_{2k + \ell+7}}_{\Delta \alpha \cdot \Delta u}  \\ & \leq \sqrt{2} \|\Psi x \| \cdot \|u\| \cdot \gamma_{2k + \ell + 7 + k_2}  = \sqrt{2} \|\Psi x \| \cdot \|u\| \cdot \gamma_{6k + 3\ell + 14}.\end{align*}
    From this we also infer that $| \widehat{\alpha} \widehat{u} | \leq \sqrt{2} \|\Psi x \| \cdot \| u \| \cdot (1+\gamma_{6k + 3\ell + 14})$. Since $\Delta s = (\widehat{\alpha}\widehat{u} - \alpha u) + \mathrm{Diag}(\delta)(\widehat{\alpha}\widehat{u})$,
    $$|\Delta s| \leq \sqrt{2}\|\Psi x \| \cdot \|u\| \cdot \Bigl( \gamma_{6k + 3\ell + 14} + \e(1 + \gamma_{6k + 3\ell + 14}) \Bigr) = \sqrt{2} \|\Psi x \| \cdot \|u\| \cdot \gamma_{6k + 3\ell + 15} \leq 2 \cdot \frac{1}{1-\epsilon} \cdot \|\Psi x \| \cdot \gamma_{6k + 3\ell + 15}(1+\gamma_{2k + \ell + 7})$$
    Let us address the mixed precision derivations as well : consider that $\gamma_{4k + 2\ell + 7} \leq \f$ and that $\alpha$ is casted to precision $\f$, yielding $\widehat{\alpha} = (1+\zeta)\alpha$. Considering that the scaling in lower precision is $\mathrm{Diag}(1+\zeta)$ instead of $\mathrm{Diag}(1+\delta)$, we simply get $|\Delta s| \leq \sqrt{2} \|\Psi x \| \cdot \|u\| \cdot \chi_5 \leq 2(1-\epsilon)^{-1} \|\Psi x \| \cdot \chi_5(1+\chi_2)$.
    Moving on to $x - \alpha u$. Taking into account the error made by difference of the two vectors, we get
    $$\fl(x - \alpha u) = \mathrm{Diag}(1+\delta)\Bigl(x - \fl(\widehat{\alpha}\widehat{u}) \Bigr) = x - \alpha u - \Delta s + \mathrm{Diag}(\delta) (x - \alpha u - \Delta s) = y + \Delta s + \mathrm{Diag}(\delta)(y+\Delta s)$$
    (the sign of the entries of $\Delta s$ is not important), more simply put as
    \begin{align} \label{eq:deltay} \widehat{y} = y + \Delta y, \quad \Delta y := \Delta s + \mathrm{Diag}(\delta)(y + \Delta s)\end{align}
    (for mixed precision, just replace $\delta$ by $\zeta$). We want to bound $\Delta y$ in terms of $\| \Psi x \|$. Meanwhile $|\Delta s|$ is already expressed in term of $\|\Psi x \|$. We mention here that, as in the deterministic case, we have the opportunity of expressing $|\Delta y|<|y|$, component-wise, even if we don't use it in this proof, as $\Delta s$ is bounded component-wise in terms of $\|\Psi x\|$ and $\|y\| = (1+\mathcal{O}(\epsilon)) \| \Psi y \| = (1+\mathcal{O}(\epsilon)) \| \Psi P x \| = (1+\mathcal{O}(\epsilon)) \| \Psi x \|$. Nevertheless, let us proceed and simply bound $\| \Delta y \|$ in terms of $\| \Psi x \|$. Using the $\epsilon$-embedding property on $y$, we get
    $$\|\Delta y \| \leq \begin{dcases} \| \Psi x \| \cdot  \frac{2}{1-\epsilon} \gamma_{6k + 3\ell +15}(1 +\gamma_{2k + \ell + 7}) \\ \\ \| \Psi x \| \cdot \frac{2}{1-\epsilon} \cdot \chi_5(1+\chi_2)  \end{dcases} $$ 
    Hence, using the $\epsilon$-embedding property on $\Delta y$,
    $$\Psi \widehat{y} = y + \Delta'y, \quad \| \Delta' y \| = \| \Psi \cdot \Delta y \| \leq \begin{dcases} \| \Psi x \| \cdot \frac{1+\epsilon}{1-\epsilon} \cdot  \cdot \gamma_{12k + 6\ell + 30}(1+\gamma_{2k + \ell + 7}) \\ \\ \| \Psi x \| \cdot \frac{1+\epsilon}{1-\epsilon} \cdot  \cdot \chi_{10}(1+\chi_2)\end{dcases} $$
    According to rounding error analysis,
    $$\begin{dcases} \gamma_{12k + 6\ell + 30} + \gamma_{12k + 6\ell + 30} \cdot \gamma_{2k + \ell + 7} = \gamma_{12k + 6\ell + 30} + \gamma_{2k + \ell + 7} \leq \gamma_{14k + 7 \ell + 37} \\
    \chi_{10} + \chi_{10} \chi_2 = \chi_{10} + \chi_2 \leq \chi_{12} \end{dcases}$$
    yielding the final
    $$\Psi \widehat{y} = y + \Delta'y, \quad \| \Delta' y \| \leq \begin{dcases} \| \Psi x \| \cdot \frac{1+\epsilon}{1-\epsilon} \cdot \gamma_{14k + 7\ell + 37}\\ \\ \| \Psi x \| \cdot \frac{1+\epsilon}{1-\epsilon} \chi_{12}\end{dcases} $$
    In both single precision and mixed precision, the result follows from setting
    $$ \Delta P := \frac{1}{\| \Psi x \|^2} \cdot (\Psi \cdot \Delta y) \cdot (\Psi x)^t$$
    and observing that $\Delta P \cdot \Psi x = \Psi \Delta y$, thus
    $$\Psi \widehat{y} = \Psi y + \Psi \cdot \Delta y = \Psi \cdot P(u, \Psi) \cdot x + \Delta P \cdot \Psi x = P(\Psi u) \cdot \Psi x + \Delta P \cdot \Psi x = \Bigl(P(\Psi u) + \Delta P \Bigr) \cdot \Psi x.$$

\textbf{Proof of~\Cref{lemma:srht}:} We first derive the accuracy of the normalized Hadamard transform, in natural order, with its normalization performed in one step after the end of the recursive calls. We proceed by induction on $p$, where the dimension $n$ of the input is $n = 2^p$. All the computations are done in higher precision $\e$ and using worst-case rounding error analysis. For all $p$, we denote $H_p$ the non-normalized Hadamard transform of $\R^{2^p}$, that is
$$H_1 := \begin{bmatrix} 1 & 1 \\ 1 & -1 \end{bmatrix} \in \R^{2 \times 2}, \quad H_{p+1} = \begin{bmatrix} H_p & H_p \\ H_p & -H_p \end{bmatrix} \in \R^{2^{p+1} \times 2^{p+1}} \quad p\geq 1.$$
For $p = 1$ and $x \in \R^2$, $x = x_1 \oslash x_2$ we must account for the errors
$$\mathbf{fl}(x_1 + x_2) = (1+\delta)(x_1 + x_2), \quad \mathbf{fl}(x_1 - x_2) = (1+\delta)(x_1 - x_2).$$
Now, let us do the computations of the $2$-dimensional non-normalized Hadamard transform,
$$\widehat{ H_1 x} = \begin{bmatrix} (1+\delta)(x_1 + x_2) \\ (1+\delta) (x_1 - x_2)\end{bmatrix} = H_1 x + \Delta y, \quad | \Delta y | \leq \e |H_1 x|.$$
By setting $\Delta x := H^{-1} \Delta y$ we get 
$$\widehat{H_1 x} = H_1(x + \Delta x), \quad \| \Delta x \| \leq \e \cdot \|x\|.$$
Let us assume now that for a given $p$, and a given $x \in \R^{2^p}$, we get
$$\widehat{H_p x} = H_p (x + \Delta x), \quad \| \Delta x \| \leq \gamma_p \cdot\|x\| \quad \text{(normwise)}.$$
Let then $x, y \in \R^{2^p}$ and $\Delta x, \Delta y$ their associated error verifying the induction hypothesis. We get
$$\fl \left( H_{p+1} \begin{bmatrix} x \\ y \end{bmatrix} \right) = \mathrm{Diag}(1+\delta) \begin{bmatrix}  H_p x + H_p y \; \; + \; \; H_p \cdot \Delta x + H_p \cdot \Delta y \\ H_p x - H_p y \; \; + \; \;  H_p \cdot \Delta x + H_p \cdot \Delta y \end{bmatrix},$$ 
yielding
$$ \fl \Bigl( H_{p+1} \begin{bmatrix} x \\ y \end{bmatrix} \Bigr) = H_{p+1} \begin{bmatrix} x \\ y \end{bmatrix} + H_{p+1} \begin{bmatrix} \Delta x \\ \Delta y \end{bmatrix} + \mathrm{Diag}(\delta) H_{p+1} \Bigl( \begin{bmatrix} x \\ y \end{bmatrix} + \begin{bmatrix} \Delta x \\ \Delta y \end{bmatrix} \Bigr). $$
Let us denote $z = x \oslash y$ and $\Delta z = \Delta x \oslash \Delta y$. Seeing as
$$\begin{dcases} \| H_{p+1} \cdot \Delta z \| = \sqrt{2} \| \Delta z  \| \leq \sqrt{2} \cdot \gamma_p \|z\| \\
\| \mathrm{Diag}(\delta) \cdot H_{p+1} (z +\Delta z) \| \leq \sqrt{2} \cdot \e(1+\gamma_p) \|z\| \end{dcases}$$
we get
$$\widehat{H_{p+1} z } = H_{p+1} z + \Delta t, \quad \| \Delta t \| \leq \sqrt{2} \gamma_{p+1} \|z\|.$$
Setting $\Delta' z = H_{p+1}^{-1} \cdot \Delta t = \sqrt{2} H_{p+1}^t \cdot \Delta t$, we get
$$\widehat{H_{p+1} z} = H_{p+1}(z + \Delta'z), \quad \| \Delta' z \| \leq \gamma_{p+1} \|z\|,$$
which concludes the induction. The final scaling by $\sqrt{n}$ can make two errors, whether $n$ is an even or odd power of $2$. In the worst case, the two errors come from the computation of $\sqrt{n} = 2^{-p/2}$, and the second comes from the scaling. Denoting simply $H$ the normalized Hadamard transform of $\R^{2^p} = \R^n$, we get
$$\forall x \in \R^n, \quad \widehat{Hx} = H(x+\Delta x) = Hx + \Delta y, \quad \| \Delta x \| \leq \gamma_{\log_2(n)+2} \|x\|, \quad \|\Delta y \| \leq \gamma_{\log_2(n)+2} \| Hx\| = \gamma_{\log_2(n)+2} \|x\|.$$
Let us now discuss the SRHT matrix $\Omega$ as described in the lemma. Let us denote $S : x \mapsto \sqrt{n/\ell} \cdot x$. The first scaling makes a backward error of magnitude $\gamma_3$, as three errors are made: the computation of $n/\ell$, the computation of its square root, and the scaling by the result. The random flip of signs is exact. So far we have
$$\fl \left( D S \cdot x \right) = DS(x + \Delta_1 x), \quad \| \Delta_1 x \| \leq \gamma_3 \|x\|$$
Let us denote $x' = DS(x + \Delta_1 x)$. Applying the normalized Hadamard transform, we get
$$ \fl \left( HDSx \right) = H(x' + \Delta'x), \quad \| \Delta'x \| \leq \gamma_{\log_2(n)+2} \|x'\| \leq \gamma_{\log_2(n)+2} \cdot \sqrt{\frac{n}{\ell}} \cdot (1+\gamma_3) \cdot \|x\|$$
hence denoting $\Delta_2 x := S^{-1} D \cdot \Delta'x$, we get
$$\fl \left( HDSx \right) = HDS(x + \Delta_1 x + \Delta_2 x), \quad \| \Delta_2 x \| \leq \gamma_{\log(n)+2}(1+\gamma_3) \|x\|$$
Summing the two errors,
$$\| \Delta_1 x  \Delta_2 x \| \leq \Bigl(\gamma_3 + \gamma_{\log_2(n)+2}(1+\gamma_{3}) \Bigr) \cdot \|x\| \leq \gamma_{\log_2(n)+5} \|x\|$$
Finally, the uniform sampling is exact, which concludes the proof.

\vspace{0.6cm}

\textbf{Proof of~\Cref{thm:Qbis}} This is also proven by straightforward induction on $j$, however this derivation is somewhat more complicated and we detail it here. For simplicity, we show the computation for the scaling where for all $j \in \{1, \cdots m\}$, $\| \Omega u_j \| = \sqrt{2}$. Also for simplicity, we write $H_j := H(u_j, \Omega, j) = P(u_j, \Psi_j)$ for all $j \in \{1, \hdots , m\}$. For the case $m = 2$, multiplying $H_1$ and $H_2$, we observe that
$$H_1 H_2 = I_n - U_2 T_2 \mathrm{ut}\left( (\Omega U_2)^t \Omega \right),$$
where $\mathrm{ut}$ denotes the upper triangular part and
$$ T_2 = \left[ \begin{matrix} 1 & - \langle \Omega u_1, \Omega u_2 \rangle \\ 0 & 1 \end{matrix} \right].$$
Let us then suppose that 
$$H_1 \cdots H_j = I - U_j T_j \mathrm{ut}\left( (\Omega U_j)^t \Omega \right),$$
for some matrix $T_j \in \R^{j \times j}$. Multiplying by $H_{j+1}$ on the right-side and using associativity, we obtain
\begin{align*} & \; \; H_1 \cdots H_j H_{j+1} = \\ & I_n - u_{j+1} (\Omega u_{j+1})^t \left[ 0_{\ell \times j} \; \; \Omega_{j+1} \right] - U_j T_j \mathrm{ut} \left( (\Omega U_j)^t \Omega \right) + U_j T_j \mathrm{ut}\left( (\Omega U_j )^t \Omega \right) u_{j+1} (\Omega u_{j+1})^t \left[ 0_{\ell \times j} \; \; \Omega_{j+1} \right].\end{align*}
We first observe that
$$ (\Omega u_{j+1})^t \left[ 0_j \; \; \Omega_{j+1} \right] = \left[ 0_j \; \; (\Omega u_{j+1})^t \Omega_{j+1} \right].$$
Since the first $j$ coordinates of $u_{j+1}$ are zero, we also have
$$ \mathrm{ut}\left( ( \Omega U_j)^t \Omega \right) u_{j+1} = (\Omega U_j)^t \Omega u_{j+1},$$
so that the total composition writes
\begin{align}
\resizebox{\textwidth}{!}{$
I_n - u_{j+1} \left[0_{\ell \times j} \; \; (\Omega u_{j+1})^t \Omega_{j+1} \right] - U_j T_j \left[ \begin{matrix} (\Omega u_1)^t \Omega & &  \\ 0_{1 \times 1} & (\Omega u_2)^t \Omega_2 & \\  \vdots & & \\ 0_{1 \times (j-1)} & & (\Omega u_j)^t \Omega_j \end{matrix} \right] + U_j \left( T_j (\Omega U_j)^t \Omega u_{j+1} \right) \left[ 0_{1 \times j} \; \; (\Omega u_{j+1})^t \Omega_{j+1} \right]$
},
\end{align}
which can be factored as
$$ H_1 \cdots H_j H_{j+1} = I - U_{j+1} \left[ \begin{matrix} T_j & - T_j (\Omega U_j)^t \Omega u_{j+1} \\ 0_{1 \times j} & 1 \end{matrix} \right] \mathrm{ut}\left( (\Omega U_{j+1})^t \Omega \right).$$
The proposition follows by induction.

\vspace{0.6cm}

\textbf{Proof of~\Cref{prop:deltawoodburry}} :
Apply the Woodburry-Morrison formula to $\mathcal{H}_j$ and $\mathcal{H}_j^{-1}$ and derive
    \begin{align}\label{eq:woodburryS_notstablebis}
        \forall j \in \{1, \hdots , m \}, \quad \tilde{T}_j^t = - \left[ I_j - T_j \mathrm{ut} \left( (\Omega U_j)^t \Omega \right) U_j \right]^{-1} T_j.
    \end{align}
Set
$$\Delta_j =  (\Omega U_j)^t \Omega U_j - \mathrm{ut}\left( (\Omega U_j)^t \Omega \right) U_j$$
then inject $\Delta_j$ in~\cref{eq:woodburryS_notstablebis} to find
\begin{align} \label{eq:woodburrySbis} \tilde{T}_j^t = (T_j^{-t} - \Delta_j)^{-1}.\end{align}
Finally, use~\cref{eq:woodburryTbis}.

\end{document}